\newtheoremstyle{mystyle}
{3pt}
{3pt}
{\itshape}
{}
{\bfseries}
{.}
{.5em}
{}
\theoremstyle{mystyle}
\newtheorem{definition}{Definition}[section]
\newtheorem{theorem}{Theorem}[section]
\newtheorem{corollary}{Corollary}[section]
\newtheorem{lemma}{Lemma}[section]
\newtheorem{example}{Example}[section]
\newtheorem*{remark}{Remark}
\renewenvironment{proof}{{\noindent\bfseries Proof:}}{\qed}
\renewenvironment{proof}[1][\proofname]{%
	\par\pushQED{\qed}\normalfont%
	\topsep6\p@\@plus6\p@\relax
	\trivlist\item[\hskip\labelsep\bfseries#1\@addpunct{.}]%
	\ignorespaces
}{%
	\popQED\endtrivlist\@endpefalse
}
\newcommand{\qte}[1]{`{#1}'}					
\newcommand{\term}[1]{{\bfseries #1}}				
\newcommand{\et}{\,{\textnormal{and}}\,}			
\newcommand{\siv}{\,{\textnormal{if}}\,}			
\newcommand{\tun}{\,{\textnormal{then}}\,}			
\newcommand{\wenn}{\,{\textnormal{if and only if}}\,}		
\newcommand{\wennif}{\,{\textnormal{iff}}\,}			
\newcommand{\eg}{\textit{e.g.}}					
\newcommand{\ie}{\textit{i.e.}}					
\newcommand{\etc}{\textit{etc.}}				
\newcommand{\dft}{\textit{de facto}\,}				
\newcommand{\sans}{\textit{sans}\,}				
\newcommand{\avec}{\textit{avec}\,}				
\newcommand{\orth}[1]{\bt{\textnormal{#1}}}					
\newcommand{\ol}[1]{\textit{#1}} 					
\newcommand{\com}{,\,}						
\newcommand{\f}[1]{\ensuremath{#1}} 				
\newcommand{\fun}[2]{{\ensuremath{#1(~{#2}~)}}}			
\newcommand{\funt}[3]{{\ensuremath{#1(~{#2}~)(~{#3}~)}}}	
\newcommand{\funeq}[3]{{\ensuremath{#1(~{#2}~)=#3}}}		
\newcommand{\esc}[1]{\,{\textnormal{#1}}\,}					
\newcommand{\bc}[2][]{\ensuremath{\left\lbrace~{#2}~\right\rbrace_{#1}}} 	
\newcommand{\bcdef}[2]{\ensuremath{\left\lbrace~{#1} \mid {#2}~\right\rbrace}} 	
\newcommand{\bt}[2][]{\ensuremath{\left\langle~{#2}~\right\rangle_{#1}}}      	
\newcommand{\bp}[2][]{\ensuremath{\left(~{#2}~\right)_{#1}}}                  	
\newcommand{\bq}[2][]{\ensuremath{\left[~{#2}~\right]_{#1}}}             	
\newcommand{\norm}[2][]{\ensuremath{\left\lVert~{#2}~\right\rVert_{#1}}}     	
\newcommand{\sph}{\ensuremath{\varphi}}							
\newcommand{\sps}{\ensuremath{\psi}}							
\newcommand{\mph}[2][ ]{\ensuremath{{#2}_{#1}}}						
\newcommand{\dom}[1]{\ensuremath{\textnormal{\normalfont\textsc{dom}}\bp{#1}}}		
\newcommand{\ran}[1]{\ensuremath{\textnormal{\normalfont\textsc{codom}}\bp{#1}}}	
\newcommand{\img}[1]{\ensuremath{\textnormal{\normalfont\textsc{im}}\bp{#1}}}	
\newcommand{\mon}{\ensuremath{\textnormal{\normalfont\textsc{mon}}}}				
\newcommand{\dya}{\ensuremath{\textnormal{\normalfont\textsc{dya}}}}				
\newcommand{\tri}{\ensuremath{\textnormal{\normalfont\textsc{tri}}}}				
\newcommand{\nry}{\ensuremath{\textnormal{\normalfont\textsc{nop}}}}				
\newcommand{\cond}{\ensuremath{\textnormal{\normalfont\textsc{cond}}}}				
\newcommand{\plx}{\ensuremath{\textnormal{\normalfont\textsc{c}}}}				
\newcommand{\maj}[1]{\ensuremath{\textnormal{\normalfont\textsc{maj}}_{#1}}}				
\newcommand{\mdl}[2][ ]{\ensuremath{\mathcal{#2}_{#1}}}		
\newcommand{\gsn}{\ensuremath{g}}				
\newcommand{\gsnt}[1]{\ensuremath{g\left[~{#1}~\right]}}	
\newcommand{\fs}{\ensuremath{\mathcal{FS}}}			
\newcommand{\fl}{\ensuremath{\mathcal{FL}}}			
\newcommand{\nl}{\ensuremath{\mathcal{NL}}}			
\newcommand{\clc}{\ensuremath{\mathcal{C}}}			
\newcommand{\den}[2][]{{\ensuremath{\llbracket}\,{\esc{\textbf{#2}}}\,\ensuremath{\rrbracket}}\ensuremath{^{{#1}}}}	
\newcommand{\vv}[2][]{\ensuremath{\mathbf{#2}_{#1}}}
\newcommand{\ff}[2][]{\ensuremath{\mathbb{#2}^{#1}}} 				
\newcommand{\inn}[2]{\ensuremath{\bt{#1~|~#2}}}
\newcommand{\sml}{{\ensuremath{\textnormal{\normalfont\textsc{sim}}}}}	
\newcommand{\abs}[1]{\ensuremath{\left\lvert~{#1}~\right\rvert}}     
\newcommand{\ceil}[1]{\ensuremath{\left\lceil~{#1}~\right\rceil}}      
\title{A vector logic for extensional formal semantics}
\author{ \href{https://orcid.org/0009-0004-7957-1806}{\includegraphics[scale=0.06]{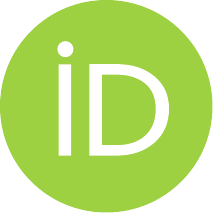}\hspace{1mm}Daniel Quigley} \\
	Department of Linguistics\\
	University of Wisconsin-Milwaukee\\
	Milwaukee, WI 53211 \\
	\texttt{quigleyd@uwm.edu} \\
}
\colorlet{linecol}{cyan!90!blue!90!black}
\colorlet{fillcol}{cyan!60!blue!80!black!40}
\begin{document}
\maketitle

\begin{abstract}\label{abs:abstract}

This paper proves a homomorphism between extensional formal semantics and distributional vector space semantics, demonstrating structural compatibility. Formal semantics models meaning as reference, using logical structures to map linguistic expressions to truth conditions, while distributional semantics represents meaning through word vectors derived from contextual usage. By constructing injective mappings that preserve semantic relationships, we show that every semantic function in an extensional model corresponds to a compatible vector space operation. This result respects compositionality and extends to function compositions, constant interpretations, and \f{n}-ary relations. Rather than pursuing unification, we highlight a mathematical foundation for hybrid cognitive models that integrate symbolic and sub-symbolic reasoning and semantics. These findings support multimodal language processing, aligning \qte{meaning as reference} (Frege, Tarski) with \qte{meaning as use} (Wittgenstein, Firth).

\end{abstract}

\keywords{formal semantics \and distributional semantics \and extensional model \and semantic space \and vector logic \and mathematical linguistics}

\section{Introduction}\label{sec:intro}
\term{Formal semantics} and \term{distributional semantics} represent two distinct mathematical approaches to representing meaning in language. The formal approach, developed through \cite{Montague1974,Gentzen1935}, maps linguistic expressions to truth values via \term{extensional} or \term{intensional} models \cite{cop2024,Montague2002}. In contrast, distributional semantics, following \cite{harrismathematical1968}'s hypothesis and popularized by \cite{Firth1957}'s \qte{you shall know a word by the company it keeps}, represents meaning through distributions in a high-dimensional \term{vector space} where semantic relationships are captured through proximity, agreement, and interpolation \cite{marcolli2023syntaxsemantics,Gardefors2014}. This distributional approach underpins modern language models and computational linguistics \cite{lenci2022comparative,boleda2020distributional}.

There exists a homomorphism between an extensional model of formal semantics with a vector space model of distributional semantics. A homomorphism between two classes of mathematical structures $C$ and $D$ means a function that preserves operations and relations between them: it maps elements from $C$ to $D$ while maintaining the structural relationships defined within $C$ in their corresponding images in $D$. More formally, for any operation or relation in $C$, applying it to elements and then mapping the result equals mapping the elements first and then applying the corresponding operation or relation in $D$. In the results of this paper, we prove that truth-theoretic semantics in the style of \cite{Heim1998HEISIG,vonFintelHeim1997}, built on the framework of a formal system, has a representation in and is compatible with a vector space model of semantics built on the framework of vector logic \cite{WestphalHardy2005,MIZRAJI1992179,mizr2008}.

This paper is not a soliloquy; while this is not the first attempt at such a brokering of peace between the two, it does represent the first (as far as the author is aware) formal proofs that a compatibility exists at all of their relationships as far as the processing of natural language is concerned. This is to say that there is precedence for this relationship between discrete and continuous, between the formal and the distributional, but a foundational proof remains. From a purely logical setting, \cite{mizr2008,MIZRAJI1992179,WestphalHardy2005} encode discrete logic into vector spaces, and form a crucial formalization necessary in applications such as quantum computing \cite{gudder2003quantum,dalla2005logics,muthukrishnan2000multivalued,chiara2003quantum}. \cite{herbelotvecchi2015building} goes the other way, from the distributional to the discrete. In \cite{Burnistov2023}, two different representations of formalizing and representing the semantics of natural language sentences (rank models and vector models) are shown to have a correspondence by invoking the notion of Scott-Ershov spaces from domain theory developed in \cite{Ershov1972Functionals,Ershov1972Lattices,Scott1972Toposes,Scott1970Theory}, to model computation; rank models and vector models are effectively interpretable in each other, and hence they have the same level of complexity. In broad strokes, this is similar to the main results of this paper, in which one model is shown related to the other; where we principally differ, however, is in this notion of computability spaces, an invocation we do not follow here.

Sections~\ref{sec:fs} and \ref{sec:vs} are exposition. Section~\ref{sec:fs} lays out in detail the formal semantics of a typed extensional model as relevant in linguistics and logic; here, we give a technical presentation in a concise and consistent way that is (at least, to the best of the author's knowledge) otherwise absent in standard texts. In Section~\ref{sec:vs}, we give a brief introduction to distributional semantics. We proceed with the main theorem of the paper, Theorem~\ref{thm:homo}, in Section~\ref{sec:homo} and its subsequent proofs. We deliberate on the consequences of this theorem as it relates to the principle of compositionality in Section~\ref{sec:comp}. A natural next step for future work is to expand Theorem~\ref{thm:homo} to an intensional model of semantics, which is introduced in Section~\ref{sec:intens}. We conclude in Section~\ref{sec:disc} with the philosophical issues of unification and mathematization as opposed to the notion of compatibility, a kind of pluralism, for which we advocate.

Some conventions used in this paper: a \term{Definition} is an explanation of a term or concept, and is written in bold typeface, and if such a definition is especially pertinent, it is given in a definition environment; a \term{Theorem} is a true statement that has not been stated elsewhere in the literature or yet proven to be true, and is given in a theorem environment; a \term{Lemma} is a true statement used in proving other true statements, and is given in a lemma environment; a \term{Proposition} is a \qte{less important} but nonetheless interesting true statement that may have been stated elsewhere in the literature but has not been proven therein, and is given in a proposition environment; a \term{Corollary} is a true statement that is a simple deduction from a theorem, lemma, or proposition; \term{Example} environments are used for non-linguistic examples; conventional linguistic left-hand-side-numbered environments are used primarily for linguistic data where appropriate. Finally, while accessibility features for \textit{.pdf} documents generated by \LaTeX{} are burgeoning, effort has been made to accommodate vision accessibility where possible throughout \cite{lars2023,tys2020}. To that end, most mathematical expressions involving closed brackets (parentheses or any of the various other bracket-types) are written with greater kerning \cite{dotan2018,beier2021,mcleish2007,sjob2016,BlackmoreWright2013}.

\section{Formal semantics}\label{sec:fs}
\term{Formal semantics}\footnote{Much of the following exposition follows from \cite{quigley2024categoricalframeworktypedextensional}.} refers to approaches to the study of meaning in language as a formal system, and includes model-theoretic \cite{Montague1974,Partee2008} and proof-theoretic semantics \cite{Gentzen1935}; the qualifier \qte{formal} refers to simply the manipulation of symbols according to a set of rules.

\begin{definition}[Formal System]\label{def:formalsys}
	A \term{formal system} is a triple of the form\begin{align*}
		\mdl[\fs]{M} = \bt{{\fl,\models,\vdash_{\clc}}},
	\end{align*}
	where \fl{} is a formal language, defined as the set of all well-formed formulas constructed according to appropriate formation rules; \f{\models} is \term{model-theoretic entailment}, a binary relation on the set of formulas of \fl; \f{\vdash_{\clc}} is \term{proof-theoretic derivability} in a calculus \f{\clc}, a binary relation on the set of formulas of \fl.
\end{definition}

The formal system is \term{sound} when: if \f{\Gamma \vdash_{\clc} \sph} (derivability), then \f{\Gamma \models \sph} (semantic entailment); the formal system is \term{complete} when: if \f{\Gamma \models \sph}, then \f{\Gamma \vdash_{\clc} \sph} (derivability). When both soundness and completeness hold, the semantic and proof-theoretic notions of entailment coincide, and we have a complete formal logic. To apply the formal system, we transform the \nl{} fragment into a computationally tractable fragment, \ie, into a formal language \fl, and interpret it relative to a model. We denote by \f{\models_{\nl}} the truth conditions and natural consequence relations on natural language utterances; if the calculus \f{\clc} of the logical system is adequate (\ie, sound and complete), then it is a model of the linguistic entailment relation \f{\models_{\nl}} observed in natural language.

Model theory is a branch of mathematical logic in which the relationship between a formal language and its interpretations are captured in a structure \mdl{M} called a \term{model} \cite{chang1990model,mendelson1997introduction,fagin2004reasoning}, which (minimally) contains the relevant elements and a method by which to interpret them. Classical model theory is the model theory of first-order (predicate) logic, from which are constructed the extensional and intensional semantics for natural language \cite{parteemathematical1990,Heim1998HEISIG}. Mainstream model theory is the model theory for either formal or natural language by invoking set-theoretic structures with Tarskian truth values \cite{Tarski1931TARTCO} and Montague grammar \cite{Montague1970b,Montague1974,Montague1970,Dowty1981}.

The following outlines the key components and conditions necessary for constructing a valid formal semantic system that models natural language meaning, starting with the translation of natural language into a formal language and proceeding through the requirements for soundness, completeness, and semantic preservation.

\begin{enumerate}
	\item Given a natural language fragment \f{nl \in \nl}, the translation \f{T: \nl \rightarrow \fl} is such that, for any natural language fragment \f{nl \in \nl}, \f{\funeq{T}{nl}{\sph}}, where \f{\sph \in \fl}.
	
	\item For any well-formed formula and a model \mdl{M}, \den[\mdl{M}]{\f{\sph}} assigns a semantic value to \f{\sph} in \mdl{M}.
	\begin{enumerate}
		\item \sph{} is \term{true} in the model \mdl{M} if the interpretation function \mdl{I} assigns the truth value \f{1} to \sph{} under the model \mdl{M}\begin{align*}
			\mdl{M} \models \sph \wennif \funeq{\mdl{I}}{\sph}{1}.
		\end{align*}
		
		\item \sph{} \term{entails} another \sps{} if for every model in which \sph{} is true, then \sps{} is also true \mdl{M}\begin{align*}
			\sph \models \sps \wennif \forall \mdl{M} \siv  \mdl{M} \models \sph\com \tun \mdl{M} \models \sps.
		\end{align*}	
	\end{enumerate}
	
	\item Choose a calculus \clc{} for the formal language \fl:
	\begin{enumerate}
		\item \clc{} is \term{sound} if, for a formula \sph{} derived from a set of axioms \f{\Lambda} using the rules of \clc{}, \sph{} is true in \mdl{M} whenever \f{\Lambda} is true in \mdl{M}\begin{align*}
			\forall\Lambda,\sph\com\siv\Lambda\vdash_{\clc} \sph\et\mdl{M}\models\Lambda\com\tun \mdl{M}\models\sph.
		\end{align*}
		
		\item \clc{} is \term{complete} if, whenever a formula \sph{} is a logical consequence of a set of axioms \f{\Lambda} (\ie, \f{\Lambda\models\sph}), then there exists a formal derivation of \f{\sph} from \f{\Lambda} in the calculus \clc{} (\ie, \f{\Lambda \vdash_\clc \sph})\begin{align*}
			\forall\Lambda\com\siv\Lambda\models\sph\com\tun\Lambda\vdash_{\clc}\sph.
		\end{align*}
	\end{enumerate}
	\item If the calculus \clc{} is both sound and complete, then we have a valid formal system.
	
	\item If our goal is to characterize the truth conditions and entailment relations of natural language fragments, then:
	\begin{enumerate}
		\item If the translation function \f{T} preserves semantic structure (\ie, the truth conditions and entailments in \nl{} correspond to those of the translated formulas in \fl), 
		
		\begin{align*}
			\forall \nl_1,\nl_2\in\nl\com\nl_1\models_\nl \nl_2 \wennif \fun{T}{\nl_1}\models\fun{T}{\nl_2},
		\end{align*}

		and if the calculus \clc{} adequately captures the \nl{} entailment relation \f{\models_\nl}, 
		
		\begin{align*}
			\forall \nl_1,\nl_2\in\nl\com\nl_1\models_\nl \nl_2 \wennif \fun{T}{\nl_1}\vdash_\clc \fun{T}{\nl_2},
		\end{align*}
		
	\end{enumerate}
	then the formal system \fs{} models the semantics of the relevant \nl{} fragment.
	
	\item If the formal system \fs{} models the semantics of the relevant \nl{}  fragment, then we have a valid model of natural language.

\end{enumerate}

The \dft calculus for formal semantics is a simply-typed lambda calculus \cite{Dowty1981,Heim1998HEISIG,Montague2002}; that is, the lambda calculus of \cite{Church1932} equipped with labels on the domain \cite{Church1940}. These labels (\term{types}) give restrictions on the components that may be composed together \cite{Alma2013,Heim1998HEISIG}, formulated as an answer to a variant of Russell's paradox called the Rosser-Kleene paradox \cite{KleeneRosser1935,Curry1946}. The simply typed lambda calculus of Heim-Kratzer style semantics \cite{Heim1998HEISIG,vonFintelHeim1997} is the formal system used here when discussing \qte{formal semantics}\footnote{One assumes also that there is a computational system of syntax that produces, as input for semantic interpretation, some tree structures; we assume a context free grammar \cite{parteemathematical1990,schubert1982english,zafar2012formal,Heim1998HEISIG}.}.

\subsection{Extensional model of semantics}\label{subsec:ext}

Mathematically, functions are either: (1) graphs; (2) formulae. The functions-as-graphs is the extensional interpretation of a function; the functions-as-formulae is the intensional interpretation of a function. Consider the \qte{functions-as-graphs} correspondence. In this way, functions are subsets of their domain-codomain product space; that is, a function from a domain \f{\dom{X}} to a codomain \f{\ran{Y}} is given by \f{f:X\rightarrow Y}, and is a subset of the product space \f{f\subset X\times Y}. Two functions \f{f:X\rightarrow Y} and \f{g:X\rightarrow Y} are identical if they map equal inputs to equal outputs. This defines the \term{extensional} interpretation of a function: a function is \emph{fully determined} by what are its input-output relationships \cite{Selinger2013}. Extensional semantics, then, is concerned with all elements in the set, in which the extension of an expression is the set of all elements that the expression refers to or the truth value to which it evaluates.

Extensional formal semantics is type-driven; every meaningful thing knows exactly what kind of thing it is and what it can combine with. Every semantic element carries three essential pieces of information: (1) its fundamental nature (encoded in its type); (2) its potential for action (shown by what inputs it accepts, if any, and its outputs, if any); (3) its combinatorial possibilities (determined by how it can compose with other types). The type system ensures that only sensible combinations can occur, ruling out nonsensical constructions automatically. 

\begin{definition}[Type]\label{def:exttyp}
	\term{Types} are labels on the domain that specify what kind of object something is and what it can combine with. For abstract types \bc{\alpha, \beta}:
	
	\begin{itemize}
		\item if \f{\alpha} is a type and \f{\beta} is a type, then \f{\alpha\times\beta} is the type of an ordered pair whose first element is of type \f{\alpha} and whose second element is of type \f{\beta};
		\item if \f{\alpha_1, \alpha_2,\dots,\alpha_n} are types, then \f{\alpha_1\times \alpha_2\times\dots\times\alpha_n} is the type of ordered \f{n}-tuples, where each element is of the corresponding type \f{\alpha_i};
		\item if \f{\alpha} is a type and \f{\beta} is a type, then \bt{\alpha,\beta} is the type of functions from \f{\alpha} to \f{\beta}, whose domain is the set of entities of type \f{\alpha} and whose range is the set of entities of type \f{\beta};
		\item if \f{\alpha} is a type, then \fun{\mdl{P}}{\alpha} is the type of sets containing elements of type \f{\alpha};
		\item nothing else is a type.
	\end{itemize}
\end{definition}

Extensional semantics has the two primitive types: \bc{e,t}. \f{e} is the type of \term{entities}, all entities have type \f{e}; \f{t} is the type of \term{truth values}, the truth values \bc{1,0} have type \f{t}. 
\begin{align*}
	e &: \text{entities in the world} \\
	t &: \text{truth or false} \in \bc{1,0}
\end{align*}

Consider the noun \ol{cat}; fundamentally, if \ol{cat} refers to an individual, it is of type \f{e}, and if it refers to the set of cats, it is of type \bt{e,t}. The adjective \ol{sad} is a one-place predicate describing a property (\eg, being sad), and takes an individual and returns a truth value, so is of type \bt{e,t}, checking if its argument has the property of sadness. The transitive verb \ol{chase} is a two-place predicate, representing a relation between two individuals (\eg, the chaser and the chased) to make a true or false statement, and so is of type \bt{e,\bt{e,t}}.

\begin{definition}[Extensional model]\label{def:extmod}
	Let \f{\mathcal{T}} be a set of extensional types. An \term{extensional model} \mdl[x]{M} is a tuple of the form:\begin{align*}
		\mdl[x]{M} = \bt{\bp[{\tau\in\mathcal{T}}]{\mdl[\tau]{D}},\mdl{I}},
	\end{align*}
	where:
	\begin{itemize}
		\item For each type \f{\tau\in\mathcal{T}}, the model determines a corresponding domain \mdl[{\tau}]{D}. The \term{standard domain} or \term{domain of discourse} is an indexed family of sets \bp[{\tau\in\mathcal{T}}]{\mdl[\tau]{D}} for all types \f{\tau\in\mathcal{T}}:
		\begin{itemize}
			\item \f{\mdl[e]{D}=\mdl{D}} is the set of entities.
			\item \f{\mdl[t]{D}=\f{\bc{0,1}}} is the set of truth-values.
		\end{itemize}
		\item For each type \f{\tau\in\mathcal{T}} and each non-logical constant \f{c_\tau} of type \f{\tau} in the formal language \fl, the extensional \term{interpretation function} \mdl{I} assigns an element from the corresponding domain \mdl[\tau]{D}, such that \f{\fun{\mdl{I}}{c_\tau}\in\mdl[\tau]{D}}.

		Let \f{V} be the set of individual variables, and \mdl[e]{D} be the domain of entities. The \term{assignment function}\footnote{\textnormal{The semantic value of an expression is \emph{relativized} to a model and an assignment; the same expression can have different semantic values depending on the specific model and assignment we consider, hence it is not an element of the model tuple. Interestingly, \cite{parteemathematical1990} does include \gsn{} in the model tuple (as well as integer indices \f{i} and \den{.} explicitly); the treatment here follows the conventions from \cite{Sider2009SIDLFP,Chierchia2000CHIMAG,kohl2024,cop2024,Gallin1975}, in which \gsn{} is not an element of the model tuple.}} \f{\gsn:V\rightarrow\mdl[e]{D}} maps each variable \f{x\in V} to an entity \f{\fun{\gsn}{x}\in\mdl[e]{D}}. For any variable \f{x\in V} and individual \f{k\in \mdl[e]{D}}, the \term{variant assignment function} \gsnt{x\mapsto k} is defined by:\begin{align*}
			\fun{\gsnt{x\mapsto k}}{y}= \begin{cases}k & \text { if } y=x, \\ \fun{\gsn}{y} & \text { if } y \neq x .\end{cases}
		\end{align*}
		\begin{enumerate}
			\item If \sph{} is a constant \f{\sph \in \fl\com \funeq{\mdl{I}}{\sph}{k} \in\mdl[e]{D}}.
			
			\item If \sph{} is a variable \f{\sph \in V\com \funeq{\gsn}{\sph}{k} \in \mdl[e]{D}}.
			
			\item For any \f{n}-ary predicate \f{P \in\fl},\space \f{\fun{\mdl{I}}{P}\subseteq\mdl[e]{D}^n} (\fun{\mdl{I}}{P} is the extension of \f{P} a subset of the \f{n}-th Cartesian power of \mdl[e]{D}).
			
			\item For any \f{n}-ary function \f{f:\mdl[e]{D}^n\rightarrow\mdl[e]{D}},\space \f{\fun{\mdl{I}}{f}: \mdl[e]{D}^n\rightarrow\mdl[e]{D}}.
		\end{enumerate}
		
		An extensional \term{denotation function}\textnormal{\footnote{Superscripts in the double square brackets (\qte{semantic brackets}) specifies the interpretation function it is based on (\ie, extensional or intensional, \sans or \avec an assignment function, \etc).}} \den[\mdl{M},\gsn]{.} assigns to every expression \sph{} of the language \fl{} a semantic value \den[\mdl{M},\gsn]{\sph} by recursively building up basic interpretation functions \fun{\mdl{I}}{.} as \fun{\mdl{I}}{\sph} for constants and the assignment function \fun{\gsn}{.} as \fun{\gsn}{x} for variables.

		\begin{enumerate}
			\item If \sph{} is a constant \f{\sph \in \fl\com \den[\mdl{M},\gsn]{\sph} = \fun{\mdl{I}}{\sph}}.
			
			\item If \sph{} is a variable in a set of variables \f{\sph \in V\com \den[\mdl{M},\gsn]{\sph} = \fun{\gsn}{\sph}}.
			
			\item For any \f{n}-ary predicate \f{P \in\fl} and sequence of terms \f{\sph_1,\sph_2,\dots,\sph_n}, we have that  \f{\den[\mdl{M},\gsn]{\fun{P}{\sph_1,\sph_2\dots,\sph_n}}=1\wenn}\begin{align*}
				\bp{\den[\mdl{M},\gsn]{\f{\sph_1}},\dots,\den[\mdl{M},\gsn]{\f{\sph_n}}}\in\den[\mdl{M},\gsn]{\f{P}}=\fun{\mdl{I}}{P}.
			\end{align*}
			
			\item For any \f{n}-ary function \f{f \in\fl} and sequence of terms \f{\sph_1,\sph_2,\dots,\sph_n}, we have that\begin{align*}
				\den[\mdl{M},\gsn]{\fun{f}{\sph_1,\sph_2,\dots,\sph_n}} & = \fun{\den[\mdl{M},\gsn]{\f{f}}}{\den[\mdl{M},\gsn]{\f{\sph_1}},\dots,\den[\mdl{M},\gsn]{\f{\sph_n}}} \\
				& =\fun{\fun{\mdl{I}}{f}}{\den[\mdl{M},\gsn]{\f{\sph_1}},\dots,\den[\mdl{M},\gsn]{\f{\sph_n}}}.
			\end{align*}
			
		\end{enumerate}
		
	\end{itemize}
\end{definition}

We may parse the technicalities of Definition~\ref{def:extmod}, and characterize the extensional model alternatively in prose-terms as:\begin{align*}
	\mdl[x]{M} = \bt{\esc{`}\text{\makecell{collection of type-specific \\ sets of allowed values}}\esc{'}, \esc{`}\text{\makecell{function that gives \\ meaning to symbols}}\esc{'}}.
\end{align*}

	Key properties of the formal system for semantics include: compositionality; context dependence; levels of meaning. The first, compositionality, has significant consequence for theories of semantics, not least of all for the primary concern of this paper, namely, showing that a relationship to a vector space model of semantics respects compositionality. Furthermore, as a consequence of this compositionlity, it is worthwhile to illustrate exactly how the denotation function is built recursively. 
	
Theorem~\ref{thm:recur} characterizes the recursiveness of denotation for an extensional semantics\footnote{For further reading on recursive functions and their interpretations, see foundational texts in \cite{SCOTT1993411,Scott1982,Scott1971}.}. The proof follows. However, to proceed, we need some measure of syntactic complexity such that, for any expression in a formal language, its denotation is well-defined by induction on its syntactic structure, where subexpressions are quantifiable as less complex than the whole expression containing them. We define such an expression following \cite{BLAIR198225,FABER2011278,Hodges2001HODFFO}.

We introduce a measure of complexity \plx{} that assigns a non-negative integer to each term and atomic formula, sensitive to the depth of its syntactic structure; that is, we define the complexity \fun{\plx}{s} of a structure \f{s} and subsequent structures \f{t_i} as the number of function and predicate symbols in \f{s}. In this way, the assigned complexity is 0 for constants and variables because they are atomic elements with no subcomponents; function terms and formulas have complexity one plus the maximum complexity of their immediate subterms, indicative of added layer(s) of structure. This measure reflects the depth of the syntactic structure of \f{s}, such that the complexity decreases with each recursive subcomponent.

We are probing tree structures. In this way, nodes represent function symbols, predicate symbols, constants, or variables; edges connect a parent node to its immediate subcomponents (arguments). Immediately, two important measures for such trees are: (1) depth, in which we check the length of the longest path from the root node to a leaf node; (2) size, the total number of nodes in the tree. We have some choice in the matter of calculating the complexity used in the proof of Theorem~\ref{thm:recur} for checking syntactic structure according to either depth or size.

If we prefer the \emph{depth} metric, then, for some structure of the formal language \f{s} and subsequent structures \f{t_i}:\begin{align*}
	\fun{\plx_{\text {depth }}}{s}= \begin{cases}0 & \text { if } s \text { is a constant or variable, } \\ 1+\max_{1 \leq i \leq n}\bc{\fun{\plx_{\text {depth }}}{t_i}} & \text { if } s=\fun{f}{t_1,t_2,\dots,t_n} \text { or } \fun{P}{t_1,t_2,\dots,t_n} .\end{cases}
\end{align*}

On the other hand, if we prefer the \emph{size} metric, then, for some element of the formal language \f{s}:\begin{align*}
	\fun{\plx_{\text {size }}}{s}= \begin{cases}0 & \text { if } s \text { is a constant or variable, } \\ 1+\sum_{i=1}^n \fun{\plx_{\text {size }}}{t_i} & \text { if } s=\fun{f}{t_1,t_2,\dots,t_n} \text { or } \fun{P}{t_1,t_2,\dots,t_n} .\end{cases}
\end{align*}

The depth metric \fun{\plx_{\text {depth }}}{s} measures the maximum path length from root to leaf in a term's tree representation, incrementing by 1 at each function application. For a term $s$, it equals the tree's depth minus 1 (when indexing at 0), exactly indicating the hierarchical structure of nested function applications. The size metric \fun{\plx_{\text {size }}}{s} measures the total number of nodes in a term's tree representation, where each symbol (function, predicate, constant, or variable) contributes 1 to the complexity. Through summation over all subterms, it accumulates these contributions, yielding the tree's total size.

\begin{example}
	Consider the term \fun{f}{a,b}, where \f{f} is a binary function symbol and \f{a} and \f{b} are constants. The syntax tree representation of this binary function has three nodes \bc{f,a,b} and depth given by: 1 (root \f{f}) \f{+} 1 (leaves \f{a, b}) \f{= 2} levels. The tree structure is:
	
	\begin{figure}[H]
		\centering
		\Tree[.f [.a ] [.b ]]
	\end{figure}
	
	Calculating \fun{\plx_{\text {depth }}}{s} follows: the base cases are given by:\begin{align*} & \funeq{\plx_{\text {depth }}}{a}{0} \\ 
		& \funeq{\plx_{\text {depth }}}{b}{0},
	\end{align*}
	
	while the function term is given by:\begin{align*}
		\fun{\plx_{\text {depth }}}{\fun{f}{a,b}}=1+\max_{1 \leq i \leq n}\bc{\fun{\plx_{\text {depth }}}{a},\fun{\plx_{\text {depth }}}{b}}=1+\max_{1 \leq i \leq n}\bc{0,0}=1+0=1.
	\end{align*}
	The depth calculated is 1, which corresponds to the depth from the root to the deepest leaf (excluding the root level).
	
	Contrast this now with the size metric. Calculating \fun{\plx_{\text {size }}}{s} follows: the base cases are given by:\begin{align*} & \funeq{\plx_{\text {size }}}{a}{1} \\ 
		& \funeq{\plx_{\text {size }}}{b}{1},
	\end{align*}
	
	while the function term is given by:\begin{align*}
		\fun{\plx_{\text {size }}}{\fun{f}{a,b}}=1+\fun{\plx_{\text {size }}}{a} + \fun{\plx_{\text {size }}}{b}=1+1+1=3.
	\end{align*}
	The size calculated is 3, which matches the total number of nodes in the tree.
\end{example}

\begin{example}
	Consider the nested term \fun{f}{\fun{g}{a},\fun{h}{b,c}}, where \f{f,g,h} are a binary function symbols and \f{a,b,c} are constants. The syntax tree representation of this function has six nodes \bc{f,g,h,a,b,c} and depth given by: 1 (root \f{f}) \f{+} max depth of subtrees \f{= 3} levels. The tree structure is:
	
	\begin{figure}[H]
		\centering
		\Tree[.f [.g [.a ]] [.h [.b ] [.c ]] ]
	\end{figure}
	
	Calculating \fun{\plx_{\text {depth }}}{s} follows: the base cases are given by:\begin{align*} & \funeq{\plx_{\text {depth }}}{a}{0} \\ 
		& \funeq{\plx_{\text {depth }}}{b}{0} \\
		& \funeq{\plx_{\text {depth }}}{c}{0},
	\end{align*}
	
	while the function terms are given by:\begin{align*}
		& \fun{\plx_{\text {depth }}}{\fun{g}{a}} = 1 + \fun{\plx_{\text {depth }}}{a} =1+0=1 \\
		& \fun{\plx_{\text {depth }}}{\fun{h}{b,c}} = 1 + \max_{1 \leq i \leq n}\bc{\fun{\plx_{\text {depth }}}{b},\fun{\plx_{\text {depth }}}{c}} =1+\max_{1 \leq i \leq n}\bc{0,0}=1+0=1\\
		& \fun{\plx_{\text {depth }}}{\fun{f}{\fun{g}{a},\fun{h}{b,c}}} = 1 + \max_{1 \leq i \leq n}\bc{\fun{\plx_{\text {depth }}}{\fun{g}{a}},\fun{\plx_{\text {depth }}}{\fun{h}{b,c}}} =1+\max_{1 \leq i \leq n}\bc{1,1}=1+1=2.
	\end{align*}
	
	The depth calculated is 2, which corresponds to the depth from the root to the deepest leaf.
	
	Contrast this now with the size metric. Calculating \fun{\plx_{\text {size }}}{s} follows: the base cases are given by:\begin{align*} & \funeq{\plx_{\text {size }}}{a}{1} \\ 
		& \funeq{\plx_{\text {size }}}{b}{1} \\
		& \funeq{\plx_{\text {size }}}{c}{1},
	\end{align*}
	
	while the function terms is given by:\begin{align*}
		& c_{\text {size }}(g(a))=1+c_{\text {size }}(a)=1+1=2 \\
		& c_{\text {size }}(h(b, c))=1+c_{\text {size }}(b)+c_{\text {size }}(c)=1+1+1=3\\
		& c_{\text {size }}(\fun{f}{\fun{g}{a},\fun{h}{b,c}})=1+c_{\text {size }}(g(a))+c_{\text {size }}(h(b, c))=1+2+3=6.
	\end{align*}

	The size calculated is 6, which is the total number of nodes in the tree.
\end{example}

\begin{example}
	Consider the nested term \fun{f}{\fun{g}{\fun{h}{a}}}, where \f{f,g,h} are unary function symbols and \f{a} is a constant. The syntax tree representation of this function has four nodes \bc{f,g,h,a} and depth given by: \f{1 (f) + 1 (g) + 1 (h) + 1 (a) = 4} levels. The tree structure is:
	
	\begin{figure}[H]
		\centering
		\Tree[.f [.g [.h [.a ]]]]
	\end{figure}
	
	Calculating \fun{\plx_{\text {depth }}}{s} follows: the base cases are given by:\begin{align*} & \funeq{\plx_{\text {depth }}}{a}{0}.
	\end{align*}
	
	while the function terms are given by:\begin{align*}
		& \fun{\plx_{\text {depth }}}{\fun{h}{a}} = 1 + \fun{\plx_{\text {depth }}}{a} =1+0=1 \\
		& \fun{\plx_{\text {depth }}}{\fun{g}{\fun{h}{a}}} = 1 + \max_{1 \leq i \leq n}\bc{\fun{\plx_{\text {depth }}}{\fun{h}{a}}} =1 +1=2\\
		& \fun{\plx_{\text {depth }}}{\fun{f}{\fun{g}{\fun{h}{a}}}} = 1 + \max_{1 \leq i \leq n}\bc{\fun{\plx_{\text {depth }}}{\fun{g}{\fun{h}{a}}}} =1 +2=3.
	\end{align*}
	
	The depth calculated is 3, which corresponds to the depth from the root to the deepest leaf.
	
	Contrast this now with the size metric. Calculating \fun{\plx_{\text {size }}}{s} follows: the base cases are given by:\begin{align*} & \funeq{\plx_{\text {size }}}{a}{1},
	\end{align*}
	
	while the function terms is given by:\begin{align*}
		& \fun{\plx_{\text {size }}}{\fun{h}{a}} = 1 + \fun{\plx_{\text {size }}}{a} = 1 +1 = 2 \\
		& \fun{\plx_{\text {size }}}{\fun{g}{\fun{h}{a}}} = 1 + \fun{\plx_{\text {size }}}{\fun{h}{a}} = 1 +2 = 3 \\
		& \fun{\plx_{\text {size }}}{\fun{f}{\fun{g}{\fun{h}{a}}}} = 1 + \fun{\plx_{\text {size }}}{\fun{g}{\fun{h}{a}}} = 1 +3 = 4.
	\end{align*}
	
	The size calculated is 4, which is the total number of nodes in the tree.
\end{example}

It follows, then, that the depth metric is preferred for when we want to measure relative to the depth or hierarchical structure of expressions, while the size metric for when we consider the total work or resource usage involved in processing the expression. Therefore, because we are interested in recursion-based nesting, and the depth metric essentially counts the number of function applications (excluding the base level), it stands to reason we proceed with the depth metric in the proofs for Theorem~\ref{thm:recur}.

\begin{theorem}\label{thm:recur}
	Let \fl{} be a formal language with constants, variables, function symbols, and predicate symbols. Let \f{\mathcal{T}} be a set of extensional types and \f{\mdl[x]{M} = \bt{\bp[{\tau\in\mathcal{T}}]{\mdl[\tau]{D}},\mdl{I}}} be an extensional model structure for \fl. Extensional denotation \den[\mdl{M},\gsn]{.} recursively builds semantic values for all terms and atomic formulas in the language \fl.

\end{theorem}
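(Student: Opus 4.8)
The plan is to establish this well-definedness claim by strong induction on the depth-complexity $\fun{\plx_{\text{depth}}}{s}$, which assigns a non-negative integer to every term and atomic formula $s$ of $\fl$. Since $\fun{\plx_{\text{depth}}}{s}$ takes values in $\mathbb{N}$, which is well-ordered by $<$, and since (as I will verify) each immediate subcomponent of a compound expression has strictly smaller complexity, the induction is well-founded and the recursion terminates after finitely many steps. What remains is to check that the clauses of Definition~\ref{def:extmod} assign a unique semantic value in the appropriate domain at each complexity level.

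First I would treat the base case $\fun{\plx_{\text{depth}}}{s}=0$, which holds exactly when $s$ is a constant or a variable. If $s$ is a constant, clause~(1) gives $\den[\mdl{M},\gsn]{\f{s}}=\fun{\mdl{I}}{s}$, and this is single-valued and lies in the appropriate domain $\mdl[\tau]{D}$ because the interpretation function $\mdl{I}$ assigns to each constant of type $\tau$ a unique element of $\mdl[\tau]{D}$. If $s$ is a variable, clause~(2) gives $\den[\mdl{M},\gsn]{\f{s}}=\fun{\gsn}{s}$, which is well-defined and lies in $\mdl[e]{D}$ because the assignment function $\gsn:V\rightarrow\mdl[e]{D}$ is total on the set $V$ of variables. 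Either way the denotation of an atomic expression is uniquely determined.

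Next I would carry out the inductive step. Suppose $\fun{\plx_{\text{depth}}}{s}=n+1$ and assume as inductive hypothesis that $\den[\mdl{M},\gsn]{\f{t}}$ is well-defined for every $t$ with $\fun{\plx_{\text{depth}}}{t}\leq n$. By the definition of $\plx_{\text{depth}}$, any expression of positive complexity has the form $\fun{f}{t_1,\dots,t_k}$ or $\fun{P}{t_1,\dots,t_k}$, and each immediate subcomponent $t_i$ satisfies
\begin{align*}
	\fun{\plx_{\text{depth}}}{t_i}\leq\max_{1\leq j\leq k}\bc{\fun{\plx_{\text{depth}}}{t_j}}<1+\max_{1\leq j\leq k}\bc{\fun{\plx_{\text{depth}}}{t_j}}=\fun{\plx_{\text{depth}}}{s},
\end{align*}
so that $\fun{\plx_{\text{depth}}}{t_i}\leq n$ and the inductive hypothesis applies to every $t_i$. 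Hence each $\den[\mdl{M},\gsn]{\f{t_i}}$ is well-defined, and clause~(4) then determines $\den[\mdl{M},\gsn]{\fun{f}{t_1,\dots,t_k}}=\fun{\fun{\mdl{I}}{f}}{\den[\mdl{M},\gsn]{\f{t_1}},\dots,\den[\mdl{M},\gsn]{\f{t_k}}}$ for a function term, while clause~(3) fixes the truth value of $\den[\mdl{M},\gsn]{\fun{P}{t_1,\dots,t_k}}$ by testing whether $\bp{\den[\mdl{M},\gsn]{\f{t_1}},\dots,\den[\mdl{M},\gsn]{\f{t_k}}}\in\fun{\mdl{I}}{P}$ for a predicate. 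Each prescription is single-valued, so the denotation of an expression of complexity $n+1$ is uniquely determined, closing the induction.

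The hard part will be the step that looks most innocuous: verifying the strict inequality above, namely that every immediate subcomponent is genuinely less complex than the whole. This is exactly what guarantees that the recursion descends to the atomic base cases in finitely many steps rather than circling indefinitely, and it is precisely why the depth metric was chosen over alternatives. To make this fully rigorous I would appeal to the fact that the formation rules of $\fl$ generate only finite, finitely branching syntax trees, so that no infinite strictly decreasing chain of complexities can occur and the well-foundedness of the induction is secured.
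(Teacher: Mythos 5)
Your proof is correct, and its engine is the same as the paper's: induction on the depth-based complexity measure \fun{\plx_{\text{depth}}}{\cdot}, with constants and variables as the base case handled by \mdl{I} and \gsn, and function terms and atomic formulas handled in the inductive step via the clauses of Definition~\ref{def:extmod}. Where you differ is in the decomposition. The paper splits the claim into three separately stated and separately proved lemmas --- well-foundedness (Lemma~\ref{lem:well}), compositionality (Lemma~\ref{lem:comp}), and determinism (Lemma~\ref{lem:deter}) --- each with its own induction, whereas you run a single induction that delivers termination and unique determination in one pass, leaving compositionality implicit in the observation that \den[\mdl{M},\gsn]{s} is computed from the denotations of the immediate subterms together with \fun{\mdl{I}}{f} or \fun{\mdl{I}}{P}. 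Your route is leaner and fully adequate for the well-definedness reading of the theorem; what the paper's redundancy buys is that compositionality is isolated as a named, citable property of the denotation function, which the paper then leans on later (Section~\ref{sec:comp} and the discussion surrounding Theorem~\ref{thm:homo}) when arguing that the homomorphism into the vector space model respects compositional structure. If you wanted your version to serve the same downstream role, you would add one sentence extracting that property explicitly from your inductive step. One small quibble: the step you flag as ``the hard part'' --- the strict inequality \f{\max_j \fun{\plx_{\text{depth}}}{t_j} < 1 + \max_j \fun{\plx_{\text{depth}}}{t_j}} --- is immediate arithmetic; the substantive point your appeal to finite, finitely branching syntax trees actually secures is that every expression of \fl{} receives a finite complexity in the first place, which is what makes the induction exhaustive.
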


\begin{proof}\label{pr:recur}
	We proceed in three parts, establishing each property in turn: (1) \den[\mdl{M},\gsn]{s} is well-founded (meaning is built from atomic elements); (2) \den[\mdl{M},\gsn]{s} is compositional (meaning of a complex expression depends solely on the meanings of its immediate parts); (3) \den[\mdl{M},\gsn]{s} is fully determined (given the same inputs, it yields the same outputs).
	
	\begin{lemma}\label{lem:well}
		For every term or atomic formula \s{s} in \fl, the computation of \den[\mdl{M},\gsn]{s} terminates after a finite number of steps, starting from the atomic elements (constants and variables); the recursive process is well-founded and does not result in infinite regress.
	\end{lemma}
	
	\begin{proof}\label{pr:well}

		The proof follows by induction. For all terms and atomic formulas \f{s'} with complexity \f{\fun{\plx}{s'} < k}, the denotation \den[\mdl{M},\gsn]{s'} is well-defined .

		Consider the base case \funeq{\plx}{s}{0}. For constants and variables, the denotation is given directly by the base cases:\begin{align*}
			\den[\mdl{M},\gsn]{c} = \fun{\mdl{I}}{c} , \quad \den[\mdl{M},\gsn]{x} = \fun{\gsn}{x}.
		\end{align*}
		
		The recursion terminates immediately, and establishes the base case: for all \f{s} with \funeq{\plx}{s}{0}, \den[\mdl{M},\gsn]{s} is well-defined \sans any recursion.
		
		Now consider the inductive step \f{\fun{\plx}{s}=k>0}. Assume that the denotation is well-defined for all terms and atomic formulas \f{s'} with complexity \f{\fun{\plx}{s'}<k}  (induction hypothesis). 
		
		Let \f{s=\fun{f}{t_1,t_2,\dots,t_n}} be a function term. Then complexity follows as:\begin{align*}
			\fun{\plx}{s} = 1 + \max _{1 \leq i \leq n}\bc{\fun{\plx}{t_i}} = k.
		\end{align*}
		
		Therefore:\begin{align*}
			 \max _{1 \leq i \leq n}\bc{\fun{\plx}{t_i}} = k - 1,
		\end{align*}
		
		from which it follows that subterms have complexity: \f{\fun{\plx}{t_i}\leq k-1} for all \f{i=1,2,\dots,n}. By the induction hypothesis, \den[\mdl{M},\gsn]{t\textsubscript{i}} is well-defined for each \f{t_i}. Then, for functions, the denotation of \f{s} is given by:\begin{align*}
			\den[\mdl{M},\gsn]{s} = \fun{\fun{\mdl{I}}{f}}{\den[\mdl{M},\gsn]{t\textsubscript{1}},\den[\mdl{M},\gsn]{t\textsubscript{2}},\dots,\den[\mdl{M},\gsn]{t\textsubscript{n}}}.
		\end{align*}
		
		\fun{\mdl{I}}{f} is an \f{n}-ary function from \f{\mdl{D}^n} to \mdl{D}, and since each \f{\den[\mdl{M},\gsn]{t\textsubscript{i}}\in\mdl{D}}, the function application is well-defined. Therefore, \den[\mdl{M},\gsn]{s} is well-defined for \f{s=\fun{f}{t_1,t_2,\dots,t_n}}.
		
		Now let \f{s=\fun{P}{t_1,t_2,\dots,t_n}} be an atomic formula with:\begin{align*}
			\max _{1 \leq i \leq n}\bc{\fun{\plx}{t_i}} = k - 1,
		\end{align*}
		
		from which it follows that subterms have complexity: \f{\fun{\plx}{t_i}\leq k-1} for all \f{i=1,2,\dots,n}, similarly to function terms. Again, by the induction hypothesis, \den[\mdl{M},\gsn]{t\textsubscript{i}} is well-defined for each \f{t_i}; the denotation of \f{s} is given by:\begin{align*}
			\den[\mdl{M},\gsn]{s} = \begin{cases}1 & \text { if } \bp{\den[\mdl{M},\gsn]{t\textsubscript{1}},\den[\mdl{M},\gsn]{t\textsubscript{2}},\dots,\den[\mdl{M},\gsn]{t\textsubscript{n}}} \in \fun{\mdl{I}}{P}, \\ 0 & \text { otherwise. }\end{cases}.
		\end{align*}
		
		\fun{\mdl{I}}{P} is an \f{n}-ary relation over \mdl{D} (a subset of \f{\mdl{D}^n}), so the \f{n}-ary tuple \bp{\den[\mdl{M},\gsn]{t\textsubscript{1}},\den[\mdl{M},\gsn]{t\textsubscript{2}},\dots,\den[\mdl{M},\gsn]{t\textsubscript{n}}} is in that subset of \f{\mdl{D}^n}, from which checking membership in \fun{\mdl{I}}{P} is well-defined. Therefore, \mdl[\mdl{M},\gsn]{s} is well-defined for \f{s=\fun{P}{t_1,t_2,\dots,t_n}}.
		
		Since both cases show that \mdl[\mdl{M},\gsn]{s} is well-defined when the denotations of all subterms are well-defined, the inductive step is complete. the denotation function \den[\mdl{M},\gsn]{.} is well-defined for all terms and atomic formulas \f{s} in \fl. Therefore, the recursive definitions are well-founded, and the computation of \den[\mdl{M},\gsn]{s} terminates after a finite number of steps for every term and atomic formula.
		
	\end{proof}
	
	\begin{lemma}\label{lem:comp}
		For every term or atomic formula \s{s} in \fl, the denotation \den[\mdl{M},\gsn]{s} depends solely on:
		\begin{enumerate}
			\item the denotations of its immediate subcomponents \den[\mdl{M},\gsn]{t\textsubscript{i}} ;
			\item the interpretations \fun{\mdl{I}}{f} or \fun{\mdl{I}}{P} of the function or predicate symbols involved;
			\item the syntactic structure used to combine these subcomponents;
		\end{enumerate}
		
		such that the meaning of \f{s} is determined by the meanings of its parts and their syntactic arrangement.
	\end{lemma}
	
	\begin{proof}\label{pr:comp}
		The proof follows by induction.
		
		Consider the base case \funeq{\plx}{s}{0}. For constants and variables, the denotation is given directly by the base cases:\begin{align*}
			\den[\mdl{M},\gsn]{c} = \fun{\mdl{I}}{c} , \quad \den[\mdl{M},\gsn]{x} = \fun{\gsn}{x}.
		\end{align*}
		
		There are no subcomponents; thus, compositionality holds trivially. In both cases, the denotation is determined entirely by the interpretation function and involves no further components. Therefore, compositionality holds for \funeq{\plx}{s}{0}.

		Now consider the inductive step \f{\fun{\plx}{s}=k>0}. Assume that for all terms and atomic formulas \f{s'} with complexity \f{\fun{\plx}{s'}<k}, the denotation\den[\mdl{M},\gsn]{s'} depends solely on  (induction hypothesis): (1) the denotations of its immediate subcomponents; (2) the interpretations of the function or predicate symbols involved; (3) the syntactic rules used to combine them.
		
		Let \f{s=\fun{f}{t_1,t_2,\dots,t_n}} be a function term with \funeq{\plx}{s}{k}. Each subterm has complexity \f{\fun{\plx}{t_i}\leq k-1} for all \f{i=1,2,\dots,n}; by the induction hypothesis, the denotations \den[\mdl{M},\gsn]{t\textsubscript{i}} are compositional. The denotation of \f{s} depends on: the interpretation \fun{\mdl{I}}{f} of the function symbol \f{f} and the denotations \den[\mdl{M},\gsn]{t\textsubscript{i}} of its immediate subterms; the syntactic rule of function application\footnote{ If $\alpha$ is a branching node with two daughter constituents, \bc{\beta,\gamma}, \f{\den[\mdl{M},\gsn]{\f{\gamma}}\in\dom{\den[\mdl{M},\gsn]{\f{\beta}}}}, then \f{\den[\mdl{M},\gsn]{\f{\alpha}} = \fun{\den[\mdl{M},\gsn]{\f{\beta}}}{\den[\mdl{M},\gsn]{\f{\gamma}}}}.} combines these denotations. Since \den[\mdl{M},\gsn]{s} is computed directly from the denotations of \f{t_1,t_2,\dots,t_n} and \fun{\mdl{I}}{f}, the denotation of \f{s} depends solely on its immediate subcomponents and their interpretations by the syntactic structure (function application) used to combine them.
		
		Similarly, let \f{s=\fun{P}{t_1,t_2,\dots,t_n}} be an atomic formula with term complexity likewise \f{\fun{\plx}{t_i}\leq k-1}. Again, by the induction hypothesis, the denotations \den[\mdl{M},\gsn]{t\textsubscript{i}} are compositional. The truth of \f{s} depends on the interpretation of the predicate symbol \fun{\mdl{I}}{P} and the denotations of immediate subterms, while the syntactic rule of predicate evaluation\footnote{If $\alpha$ is a branching node with two daughter constituents, \bc{\beta,\gamma}, then, if \f{\den[\mdl{M},\gsn]{\f{\beta}}\in\mdl[\bt{e,t}]{D}} and \f{\den[\mdl{M},\gsn]{\f{\gamma}}\in\mdl[\bt{e,t}]{D}}, then \f{\den[\mdl{M},\gsn]{\f{\alpha}} = \bq{\lambda x \in\mdl[e]{D}.\funeq{\den[\mdl{M},\gsn]{\f{\beta}}}{x}{1}\et\funeq{\den[\mdl{M},\gsn]{\f{\gamma}}}{x}{1} }}}. checks membership in \fun{\mdl{I}}{P}. Now again, since \den[\mdl{M},\gsn]{s} is determined entirely by the denotations of the terms \f{t_1,t_2,\dots,t_n} and whether they satisfy the predicate \f{P}, the denotation of $s$ depends solely on its immediate subcomponents and their interpretations via predicate application.
		
		Therefore, if the denotations of the immediate subcomponents of \f{s} are compositional (by inductive hypothesis), then the denotation of \f{s} is also compositional; by induction on the complexity \fun{\plx}{s}, the denotation function \den[\mdl{M},\gsn]{.} is compositional for all terms and atomic formulas \f{s}.
		
	\end{proof}
	
	\begin{lemma}\label{lem:deter}
		Given the same model structure \mdl{M}, variable assignment \gsn, and expression \f{s}, the denotation \den[\mdl{M},\gsn]{s} is uniquely determined.
	\end{lemma}
	
	\begin{proof}\label{pr:deter}
		As before, the proof follows by induction.

		Consider the base case \funeq{\plx}{s}{0}. For constants and variables, the denotation is given directly by the base cases:\begin{align*}
			\den[\mdl{M},\gsn]{c} = \fun{\mdl{I}}{c} , \quad \den[\mdl{M},\gsn]{x} = \fun{\gsn}{x}.
		\end{align*}
		
		\fun{\mdl{I}}{c} and \fun{\gsn}{c} are uniquely determined by \mdl{I} and \gsn, respectively, trivially, which are fixed. For all \f{s} with\funeq{\plx}{s}{0}, the denotation \den[\mdl{M},\gsn]{s} is uniquely determined. Therefore, the denotation function is deterministic for base cases.

		Now for the inductive step \f{\fun{\plx}{s}=k>0}. Assume that for all terms and atomic formulas \f{s'} with complexity \f{\fun{\plx}{s'}<k}, the denotation\den[\mdl{M},\gsn]{s'} is uniquely determined (induction hypothesis).

		Let \f{s=\fun{f}{t_1,t_2,\dots,t_n}} be a function term with \funeq{\plx}{s}{k}. Each subterm has complexity \f{\fun{\plx}{t_i}\leq k-1} for all \f{i=1,2,\dots,n}; by the induction hypothesis, the denotations \den[\mdl{M},\gsn]{t\textsubscript{i}} are uniquely determined. The denotations of subterms \den[\mdl{M},\gsn]{t\textsubscript{i}} are uniquely determined by the induction hypothesis, while \fun{\mdl{I}}{f} is fixed and uniquely defined. Since \fun{\mdl{I}}{f} is a function by definition of interpretation, applying it to these uniquely determined arguments yields a unique result, thus establishing that \den[\mdl{M},\gsn]{s} is uniquely determined.

		Let \f{s=\fun{P}{t_1,t_2,\dots,t_n}} be an atomic formula with term complexity likewise \f{\fun{\plx}{t_i}\leq k-1}. Again, by the induction hypothesis, the denotations \den[\mdl{M},\gsn]{t\textsubscript{i}} are uniquely determined. Each \den[\mdl{M},\gsn]{t\textsubscript{i}} is uniquely determined, and \fun{\mdl{I}}{P} is a fixed subset of \f{\mdl{D}^n}; since the tuple \bp{\den[\mdl{M},\gsn]{t\textsubscript{1}},\den[\mdl{M},\gsn]{t\textsubscript{2}},\dots,\den[\mdl{M},\gsn]{t\textsubscript{n}}} is uniquely determined, and checking its membership in \fun{\mdl{I}}{P} is a well-defined, deterministic operation, it follows that \den[\mdl{M},\gsn]{s} is uniquely determined.
		
		In both cases, we have shown that if the denotations of the immediate subcomponents of \f{s} are uniquely determined (by the induction hypothesis), then the denotation of \f{s} is also uniquely determined. Therefore, by induction on the complexity \fun{\plx}{s}, the denotation function \den[\mdl{M},\gsn]{.} is deterministic for all terms and atomic formulas \f{s}.

	\end{proof}

	Therefore, the denotation function \den[\mdl{M},\gsn]{.} recursively constructs semantic values from atomic elements (well-foundedness), while each complex term or atomic formula's meaning is determined exclusively by its immediate subcomponents' meanings and their syntactic arrangement (compositionality). Moreover, given identical inputs of the model, variable assignment, and expression, the denotation function consistently produces the same output (determinism).

\end{proof}

The denotation function begins with basic interpretations \mdl{I} of constants, function and predicate symbols, along with variable assignment \gsn. Through recursive application of definitions, it computes denotations for complex terms and atomic formulas, ultimately yielding a final semantic value that adheres to well-foundedness, compositionality, and determinism.\begin{align*}
		\text { Basic interpretations } \mathcal{I}, g \quad \xrightarrow{\text { Recursive application }} \quad \text { Final semantic value }
\end{align*}

We belabor the point on the interpretation and denotation recursions, for it is this machinery that is responsible for guiding the compositionality discussion in this paper.

\section{Distributional semantics}\label{sec:vs}
The distributional approach to semantics represents the meaning of words as distributions in a high-dimensional vector space. From the outset, a vector space model of semantics was interdisciplinary: linguistics, psychology, and computer science, each of which contributed a fundamental aspect of the approach. The linguistic contribution followed from the distributionalists above mentioned, though the conceit of representing linguistic information in the form of vector-like arrays of information has roots at least as far back as Descartes and Leibniz \cite{wierzbickasemantics1992,wierzbickasemantics1996}, developed in the work of \cite{hjelmsleprolegomena1961}, and fleshed out in early models of generative grammar \cite{katz1963}.

Vector spaces and techniques from linear algebra are ubiquitous in applied mathematics and physics, but they also occur in areas such as cognitive science \cite{gard2000}, machine learning \cite{bishoppattern2006,kelleherfundamentals2020}, computational linguistics \cite{voav2015,grefenstetteexperimental2011,maillardtypedriven2014,amblardcontext2016}, the social sciences \cite{baltagdynamic2019,liulogical2014} and formal philosophy \cite{vanb2023,leit2020}. There is also a body of logical work on vector spaces, in the first-order model-theoretic tradition \cite{hodges1993,pill1997}, in relevant logic \cite{urq1972} and in modal logics of space \cite{vanbenthemmodal2007}. Distributional models reliably correlate many linguistic processes, including synonymy, noun categorization, selectional preference, analogy, and relation classification \cite{landauer1997solution,lund1995semantic,baroni2010distributional,erk2010flexible}. Psychologically, there is evidence that brain activity and the arrangement of hierarchical information in the brain is not only distributional, but may be modeled as a specific kind of vector space (hyperbolic space), a consequence of abstraction over context of use, experience, and and processing of hierarchical information \cite{mitchell2008a,mitchell2008b,zhang2023hippocampal}.

\subsection{Vector model of semantics}

The mathematics that underlie distributional semantics is that of vector spaces; the meaning of words, phrases, and sentences are vectors in a high-dimensional space. The underlying idea is that the meaning of a word can be captured by its contextual usage, with similar contexts leading to similar meanings.

\begin{definition}[Vector space]\label{def:vectorspace}
	\f{\mdl{V} = \bt{V,\oplus,\odot}} is a \term{vector space} over a field \ff{K} (conventionally taken to be \ff{R} in natural language processing) with two binary operations:

	\begin{enumerate}
		\item \term{vector addition} \f{\oplus: V \times V \rightarrow V} that satisfies:
		\begin{enumerate}
			\item \f{\vv{u}\oplus\vv{v} = \vv{v}\oplus\vv{u}} for all \f{\vv{u},\vv{v}\in V} \hfill commutativity
			\item \f{\bp{\vv{u}\oplus\vv{v}}\oplus\vv{w} = \vv{u}\oplus\bp{\vv{u}\oplus\vv{w}}} for all \f{\vv{u},\vv{v},\vv{w}\in V} \hfill associativity
			\item \f{\exists \vv{0}\in V} such that \f{\vv{v}\oplus \vv{0}=\vv{v}} for all \f{\vv{v}\in V} \hfill zero vector
			\item \f{\forall \vv{v}\in V}, there exists a vector \f{-\vv{v}\in V} such that \f{\vv{v}\oplus\bp{-\vv{v}}=\vv{0}} \hfill additive inverse
		\end{enumerate}
		\item \term{scalar multiplication} \f{\odot: \ff{K} \times V \rightarrow V} that satisfies:
		\begin{enumerate}
			\item \f{\alpha\odot\bp{\vv{v}\oplus\vv{u}}=\alpha\odot\vv{v}\oplus\alpha\odot\vv{u}} for all \f{\alpha\in\ff{K}} and \f{\vv{u},\vv{v}\in V} \hfill distributive over vector addition
			\item \f{\bp{\alpha\oplus\beta}\odot\vv{v} = \alpha\odot\vv{v}\oplus\beta\odot\vv{v}} \hfill distributive over field addition
			\item \f{\alpha\odot\bp{\beta\odot\vv{v}} = \bp{\alpha\beta}\odot\vv{v}} for all \f{\alpha,\beta\in\ff{K}} and \f{\vv{v}\in V} \hfill associativity
			\item \f{1\odot\vv{v} = \vv{v}} for all \f{\vv{v}\in V} \hfill identity element
		\end{enumerate}
	\end{enumerate}

\end{definition}

\begin{definition}[Inner product space]\label{def:innerprod}
	A vector space \mdl{V} equipped with an inner product is called an \term{inner product space}. The inner product is a function \f{\inn{\cdot}{\cdot}:V\times V\rightarrow \ff{K}} that satisfies:
	 
	\begin{enumerate}
		\item \f{\inn{\vv{v}}{\vv{u}}=\overline{\inn{\vv{u}}{\vv{v}}}} \hfill conjugate symmatry
		\item \f{\inn{\alpha\vv{u}\oplus\beta\vv{v}}{\vv{w}} = \alpha\inn{\vv{v}}{\vv{u}}\oplus\beta\inn{\vv{v}}{\vv{w}}} \hfill linearity
		\item \f{\inn{\vv{v}}{\vv{v}}\geq0} with equality if and only if \f{\vv{v}=\vv{0}} \hfill positivity
	\end{enumerate}
\end{definition}

\begin{definition}[Hilbert space]\label{def:hilbert}
	A \term{Hilbert space} \f{\mdl{S} = \bt{\mdl{V},\inn{\cdot}{\cdot}}} is a real or complex inner product space that is complete with respect to the norm induced by the inner product, where:
	
	\begin{itemize}
		\item \mdl{V} is a (possibly infinite-dimensional) vector space,
		\item \inn{\cdot}{\cdot} is an inner product (positive-definite sesquilinear form),
	\end{itemize}

	and is complete: every Cauchy sequence in \mdl{V} converges to an element in \mdl{V}.
\end{definition}

A note on notation. We are choosing to use \inn{\cdot}{\cdot} to represent the inner product instead of the more common \bt{.~,~.}, so as to avoid possible confusion with the notation we are using for model \f{n}-tuples.

\begin{definition}[Metric]\label{def:metric}
	The \term{norm} of a vector \f{\vv{v}\in V} is defined as: \begin{align*}
		\norm{\vv{v}} = \sqrt{\inn{\vv{v}}{\vv{v}}}.
	\end{align*}
	
	The \term{metric} is a distance function: \begin{align*}
	\funeq{d}{\vv{v},\vv{u}}{\norm{\vv{u}-\vv{v}}}.
	\end{align*}
\end{definition}

\begin{definition}[Semantic space]\label{def:semanticspace}
	A \term{semantic space} \mdl{S} is a Hilbert space, a complete inner product space; vectors represent semantic entities, and the operations of vector addition and scalar multiplication allow for the combination and scaling of these entities within the space.
	
\end{definition}

\begin{definition}[Word vector]\label{def:wordvector}
	Given a basis \f{\bc[{i=1}]{\vv{b}}^n} of linearly independent vectors, a \term{word vector} \f{\vv[w]{v}\in \mdl{S}} is the vector representation of a word \f{w} in the space \mdl{S} expressed as:\begin{align*}
		\vv[w]{v} =\sum_{i=1}^n \alpha_i^{\bp{w}} \vv[i]{b},
	\end{align*}
	
	where \f{\alpha_i^{\bp{w}}\in\ff{K}} is the corresponding coefficient in the field \ff{K}.
\end{definition}

The word vector, then, is the weighted sum of basic meaning features (following \cite{baronietal2014frege}, it is a summary of the contexts in which the word can occur):\begin{align*}
	\vv[w]{v} = \sum \text{(importance of feature}_i) \times \text{(feature dimension}_i)
\end{align*}

Some example word vectors may have the form, for some weights \f{\alpha_i,\beta_i}:\begin{align*}
	\mathbf{v}_\text{cat} &= \alpha_1\vv[furry]{b} + \alpha_2\vv[pet]{b} + \alpha_3\vv[keyboard]{b} + \dots \\
	\mathbf{v}_\text{run} &= \beta_1\vv[motion]{b} + \beta_2\vv[speed]{b} + \beta_3\vv[exercise]{b} + \dots
\end{align*}

Word vectors \vv[w]{v} are typically learned from large corpora using techniques such as co-occurrence matrices \cite{li2015word,salah2018word,bullinaria2012extracting}, neural networks \cite{mikolov2013efficientestimationwordrepresentations,penningtonetal2014glove,Mikolov2013,Bojanowski2017}, or matrix factorization methods like singular value decomposition (SVD) \cite{bullinaria2012extracting,albright2004taming,arora2012learning}. \cite{lupyan2024} provides a model whereby word vectors are encoded (though not explicitly) according to their type (\eg, as defined in Definition~\ref{def:exttyp}). These methods capture the context-dependent meaning of words by embedding them in a space where semantic similarity corresponds to vector proximity.

\begin{definition}[Vector model]\label{def:vectormodel}
	A \term{vector model} of semantics \mdl[\mdl{S}]{M} is simply the semantic space \mdl{S}.
	
\end{definition}

As we did for the extensional model, we may parse the technicalities alternatively in prose-terms. The Hilbert space, for example:\begin{align*}
	\mdl{S} = \bt{\esc{`}\text{vector space}\esc{'},\esc{`}\text{\makecell{way of measuring similarity \\ similarity between vectors}}\esc{'}},
\end{align*}

where that way of measuring similarities must respect vector operations, give positive self-similarity, and complete all limit points. The semantic space, then, has the following prose-form:\begin{align*}
	\mdl{S} = \bt{\esc{`}\text{\makecell{space of meaning \\ and word vectors}}\esc{'},\esc{`}\text{\makecell{way to measure and \\ compare meanings}}\esc{'}}.
\end{align*}

The semantic model \mdl[{\mdl{S}}]{M} is this inner product space on meaning vectors.

\begin{definition}[Similarity]\label{def:similarity}
	The \term{similarity} between two vectors \f{\vv{u},\vv{v}\in S} is often measured by the cosine of the angle \f{\theta} between them, given by:\begin{align*}
		\fun{\sml}{\vv{u},\vv{v}}=\funeq{\cos}{\theta_{\vv{u},\vv{v}}}{\frac{\inn{\vv{u}}{\vv{v}}}{\norm{\vv{u}} \norm{\vv{v}}}},
	\end{align*}
	
	where \norm{\vv{u}} is the norm of \vv{u} and similarly for \vv{v}.
	
\end{definition}

The cosine similarity ranges \bq{-1,+1}: \f{+1} indicates identical vectors (\ie, pointing in the same direction); \f{0} indicates orthogonality; \f{-1} indicates vectors pointing in opposite directions. Raw frequency values are non-negative; therefore, in practice, the similarity metric varies \bq{0,+1}. This particular measure is widely used because it is invariant to the magnitude of the vectors and thus captures the similarity based purely on direction in the semantic space \cite{jurafskyspeech2009}.

\section{Homomorphism}\label{sec:homo}
For any semantic relationship or operation in \mdl[ext]{M}, there exists a corresponding geometric relationship or operation in \mdl[{\mdl{S}}]{M} that reflects this semantic content; function application and composition in \mdl[ext]{M} correspond to the composition of functions in \mdl[\mdl{S}]{M}.

\begin{theorem}\label{thm:homo}
	For the extensional model \mdl[ext]{M} and the vector space model \mdl[{\mdl{S}}]{M}, there exist injective mappings \f{H = \bc[\tau\in T]{\mph[\tau]{h}}} where each \f{\mph[\tau]{h}:\mdl[\tau]{D}\rightarrow \img{\mph[\tau]{h}}\subseteq\mdl[{\mdl[\tau]{D}}]{S}} for each semantic domain \mdl[\tau]{D} in \mdl[ext]{D}, where \mdl[{\mdl[\tau]{D}}]{S} is a vector space in \mdl[\mdl{S}]{M}. For every semantic function \f{f:\mdl[A]{D}\rightarrow\mdl[B]{D}} in \mdl[ext]{M}, there exists a corresponding function \f{\mph[f]{h}:\img{\mph[A]{h}}\rightarrow\img{\mph[B]{h}}} in \mdl[\mdl{S}]{M} such that for all \f{a\in\mdl[A]{D}}:\begin{align*}
		\funeq{\mph[f]{h}}{\fun{\mph[A]{h}}{a}}{\fun{\mph[B]{h}}{\fun{f}{a}}}.
	\end{align*}

\end{theorem}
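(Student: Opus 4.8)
The plan is to prove the two claims of the statement in the order asserted: first exhibit the family of injective embeddings \f{H = \bc[\tau\in T]{\mph[\tau]{h}}} (the object part of the correspondence), and then, for each semantic function \f{f}, construct the associated map \mph[f]{h} (the morphism part) and verify the square \funeq{\mph[f]{h}}{\fun{\mph[A]{h}}{a}}{\fun{\mph[B]{h}}{\fun{f}{a}}}.

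For the embeddings I would invoke the free vector space construction. For each type \f{\tau}, fix an enumeration of the domain \mdl[\tau]{D} and send each element to a distinct basis vector \vv[i]{b} of the designated space \mdl[{\mdl[\tau]{D}}]{S}, a one-hot encoding. Distinct elements then go to distinct, linearly independent basis vectors, so each \mph[\tau]{h} is injective by construction, and its image \img{\mph[\tau]{h}} is exactly the set of those basis vectors inside \mdl[{\mdl[\tau]{D}}]{S}. Injectivity --- the only property required at this stage --- follows immediately from linear independence of the chosen basis, and it holds uniformly across all types, including higher function types such as \bt{e,t}, whose domains are themselves sets of functions and hence still ordinary sets amenable to the same construction.

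For the morphism part, the key observation is that injectivity of \mph[A]{h} yields a well-defined inverse on its image, \f{\mph[A]{h}^{-1}:\img{\mph[A]{h}}\rightarrow\mdl[A]{D}}. I would then set
\begin{align*}
	\mph[f]{h} := \mph[B]{h} \circ f \circ \mph[A]{h}^{-1}, \quad \esc{restricted to} \img{\mph[A]{h}}.
\end{align*}
This is well-defined and lands in \img{\mph[B]{h}}, since \f{f} maps into \mdl[B]{D} and \mph[B]{h} maps \mdl[B]{D} into its image. Commutativity is then a one-line check: for any \f{a\in\mdl[A]{D}}, the identity $\mph[A]{h}^{-1}\circ\mph[A]{h} = \id{\mdl[A]{D}}$ collapses the inner composition, leaving \funeq{\mph[f]{h}}{\fun{\mph[A]{h}}{a}}{\fun{\mph[B]{h}}{\fun{f}{a}}}, exactly as required.

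The genuine difficulty here is not algebraic but a matter of structural bookkeeping: ensuring that each domain --- entities, truth values, and every higher function type --- is routed into its own space \mdl[{\mdl[\tau]{D}}]{S}, and that the induced \mph[f]{h} respects these per-type assignments, so that \f{H} is a coherent family rather than a disconnected collection of injections. A secondary point, which matters for the paper's broader compositionality claims but not for the bare statement, is whether \mph[f]{h} may be taken \emph{linear} rather than merely set-theoretic: because \img{\mph[A]{h}} consists of linearly independent basis vectors, the map extends uniquely to a linear operator --- a selection matrix --- on their span, which is precisely what ties the construction back to the vector-logic framework of Section~\ref{sec:vs}. The stated result, however, asks only for the function on the image, for which the conjugation $\mph[B]{h}\circ f\circ\mph[A]{h}^{-1}$ suffices.
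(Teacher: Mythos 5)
Your proposal is correct and follows essentially the same route as the paper: the paper likewise embeds each domain \mdl[\tau]{D} by sending elements to distinct basis vectors (Lemmas~\ref{lem:bij}, \ref{lem:bijt}, \ref{lem:bijA}, with injectivity from linear independence) and then defines \f{\mph[f]{h}} on \img{\mph[A]{h}} by the same conjugation \f{\mph[B]{h}\circ f\circ \mph[A]{h}^{-1}} via the left-inverse, verifying commutativity exactly as you do (Lemmas~\ref{lem:fun}, \ref{lem:func}). Your closing remark about extending \mph[f]{h} linearly to a matrix on the span also mirrors the paper's subsequent vector-logic discussion of operators, so nothing is missing.
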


Moreover, the following commutative diagram holds:

\begin{figure}[H]
	\[\begin{tikzcd}
		\mdl[A]{D} && \mdl[B]{D} \\
		\\
		\img{\mph[A]{h}} && \img{\mph[B]{h}}
		\arrow["{\mph{f}}", from=1-1, to=1-3]
		\arrow["{\mph[A]{h}}"', from=1-1, to=3-1]
		\arrow["{\mph[B]{h}}", from=1-3, to=3-3]
		\arrow["{\mph[f]{h}}"', from=3-1, to=3-3]
	\end{tikzcd}\]
\end{figure}

A note on the injectivity of \mph[\tau]{h} with \mdl[{\mdl[\tau]{D}}]{S} and the surjectivity with \img{\mph[\tau]{h}}. To begin, we cannot claim a surjection with of \mph[\tau]{h} with \mdl[{\mdl[\tau]{D}}]{S}, as a consequence of working with infinite sets. If \mdl[e]{D} is infinite, then mapping it bijectively into a finite-dimensional vector space of the form, for example, \f{\mdl{S} = \ff[n]{R}} is not possible. The cardinality of \ff[n]{R} for finite \f{n} is equal to the cardinality of \ff{R}, which is the cardinality of the continuum \f{\mathfrak{c}}. The cardinality of infinite sets of entities (\eg, the set of natural numbers \ff{N}) can be countably infinite (\f{\aleph_0}) or uncountably infinite (\f{\mathfrak{c}}). It follows, then, that if \mdl[e]{D} has cardinality greater than \f{\mathfrak{c}}, a bijection cannot exists. So, unless we impose that \mdl[{\mdl[\tau]{D}}]{S} is infinite-dimensional or \mdl[e]{D} is finite, a bijective \mph[e]{h} cannot exist.

\subsection{Proof for \mph[e]{h}}

Let \f{\mdl[e]{D} = \bcdef{e_i}{i\in I}} be the set of entities in the extensional model \mdl[ext]{M}, where \f{I} is an index set. Let \mdl[{\mdl[e]{D}}]{S} be a vector space over \ff{R} with a basis \bcdef{\vv[i]{b}}{i\in I}, where each \vv[i]{b} is a vector in \mdl[{\mdl[e]{D}}]{S}. Define the mapping \f{\mph[e]{h}:\mdl[e]{D}\rightarrow\img{\mph[e]{h}}\subseteq\mdl[{\mdl[e]{D}}]{S}} by\begin{align*}
	\funeq{\mph[e]{h}}{e_i}{\vv[i]{b}}
\end{align*} that preserves the structural and operational properties of the extensional model \mdl[ext]{M} in the vector space model \mdl[{\mdl[e]{D}}]{S}.

\begin{lemma}\label{lem:bij}
	The mapping \f{\mph[e]{h}:\mdl[e]{D}\rightarrow\img{\mph[e]{h}}\subseteq\mdl[{\mdl[e]{D}}]{S}} is an injective mapping between \mdl[e]{D} and \mdl[{\mdl[e]{D}}]{S}.

\end{lemma}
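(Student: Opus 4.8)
The plan is to derive injectivity of $\mph[e]{h}$ directly from the linear independence of the chosen basis $\bcdef{\vv[i]{b}}{i\in I}$. Because $\mph[e]{h}$ is defined to map \emph{onto} $\img{\mph[e]{h}}$, surjectivity onto the image is automatic, so the only content to establish is injectivity.

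First I would make explicit the standing assumption that the enumeration $i \mapsto e_i$ of $\mdl[e]{D}$ is faithful, meaning $e_i = e_j$ forces $i = j$; this is precisely what it takes for $\mdl[e]{D} = \bcdef{e_i}{i\in I}$ to list distinct entities and for the rule $\funeq{\mph[e]{h}}{e_i}{\vv[i]{b}}$ to be well-defined in the first place.

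Next I would invoke the defining property of a basis: the vectors $\bcdef{\vv[i]{b}}{i\in I}$ are linearly independent. The consequence I need is that the indexing $i \mapsto \vv[i]{b}$ is itself injective, for if $\vv[i]{b} = \vv[j]{b}$ with $i \neq j$, then $\vv[i]{b} - \vv[j]{b} = \vv{0}$ would be a nontrivial linear dependence (coefficients $+1$ and $-1$), contradicting independence. Chaining the two injections finishes the argument: if $\fun{\mph[e]{h}}{e_i} = \fun{\mph[e]{h}}{e_j}$, then $\vv[i]{b} = \vv[j]{b}$, so $i = j$ by independence, so $e_i = e_j$ by faithfulness; hence $\mph[e]{h}$ is injective, and being onto its image, it is a bijection from $\mdl[e]{D}$ to $\img{\mph[e]{h}}$.

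I do not expect a genuine obstacle here, as injectivity requires nothing beyond linear independence. The one point meriting care is the cardinality matching already flagged before the lemma: defining $\mph[e]{h}$ with a basis indexed by the same set $I$ presupposes $\dim \mdl[{\mdl[e]{D}}]{S} = \abs{I}$, so if $\mdl[e]{D}$ is infinite the space must be infinite-dimensional, carrying a Hamel basis of matching cardinality. This is exactly why one can claim only an injection into $\mdl[{\mdl[e]{D}}]{S}$; a bijection onto a fixed finite-dimensional $\ff[n]{R}$, whose cardinality is $\mathfrak{c}$ regardless of $n$, is unavailable.
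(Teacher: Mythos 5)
Your proof is correct and takes essentially the same route as the paper's: both arguments reduce injectivity of \f{\mph[e]{h}} to the fact that linearly independent basis vectors \bcdef{\vv[i]{b}}{i\in I} are pairwise distinct. The only difference is one of polish --- you work contrapositively and make explicit two points the paper leaves implicit (the faithfulness of the enumeration \f{i\mapsto e_i}, and the \f{+1,-1} coefficient argument showing independence forces distinctness) --- which is a refinement, not a different approach.
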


\begin{proof}
	Suppose \f{e_i,e_j\in\mdl[e]{D}} and \f{e_i\neq e_j}. Then, by definition, \funeq{\mph[e]{h}}{e_i}{\vv[i]{v}} and \funeq{\mph[e]{h}}{e_j}{\vv[j]{v}}. Since basis vectors are linearly independent and distinct, we have that \f{\vv[i]{v}\neq \vv[j]{v}} for indices \f{i\neq j}, from which follows that \f{\fun{\mph[e]{h}}{e_i}\neq\fun{\mph[e]{h}}{e_j}}, and \mph[e]{h} is injective.
	
\end{proof}

\begin{lemma}\label{lem:rel}
	
	Let \f{\mdl{R}\subseteq\mdl[e]{D}^n} be an \f{n}-ary relation in the extensional model \mdl[ext]{M}. Then there exists a corresponding relation \f{\mdl{R}'\subseteq\mdl[{\mdl[e]{D}}]{S}^n} in the vector space model \mdl[{\mdl{S}}]{M} such that for all \f{e_1,e_2,\dots,e_n \in \mdl[e]{D}}:\begin{align*}
		\fun{\mdl{R}}{e_1,e_2,\dots,e_n}\Longleftrightarrow \fun{\mdl{R}'}{\fun{\mph[e]{h}}{e_1},\fun{\mph[e]{h}}{e_2},\dots,\fun{\mph[e]{h}}{e_n}},
	\end{align*}

	where \f{\mph[e]{h}:\mdl[e]{D}\rightarrow \img{\mph[e]{h}}\subseteq\mdl[{\mdl[e]{D}}]{S}} is an injective mapping from the entity domain to a subset of a vector space.

\end{lemma}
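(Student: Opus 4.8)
The plan is to construct $\mdl{R}'$ explicitly as the pushforward of $\mdl{R}$ along the componentwise action of $\mph[e]{h}$, and then to verify the biconditional by leaning on the injectivity already established in Lemma~\ref{lem:bij}. Concretely, I would set
\[ \mdl{R}' = \bcdef{\bp{\fun{\mph[e]{h}}{e_1},\dots,\fun{\mph[e]{h}}{e_n}}}{\bp{e_1,\dots,e_n}\in\mdl{R}}, \]
which is a subset of $\img{\mph[e]{h}}^n$ and hence of $\mdl[{\mdl[e]{D}}]{S}^n$, as the statement demands. This choice is essentially forced: since $\mph[e]{h}$ sends each entity to a distinct basis vector, the only natural candidate for the corresponding relation is the image of $\mdl{R}$ under $\mph[e]{h}$ applied slot-by-slot.

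Next I would check the two directions of the biconditional. The forward implication is immediate from the definition: if $\fun{\mdl{R}}{e_1,\dots,e_n}$ holds, so that $\bp{e_1,\dots,e_n}\in\mdl{R}$, then the tuple $\bp{\fun{\mph[e]{h}}{e_1},\dots,\fun{\mph[e]{h}}{e_n}}$ lies in $\mdl{R}'$ by construction, and thus $\fun{\mdl{R}'}{\fun{\mph[e]{h}}{e_1},\dots,\fun{\mph[e]{h}}{e_n}}$ holds.

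The backward implication is where the real content sits, and where I expect the only genuine obstacle. Assuming $\fun{\mdl{R}'}{\fun{\mph[e]{h}}{e_1},\dots,\fun{\mph[e]{h}}{e_n}}$, the definition of $\mdl{R}'$ supplies some tuple $\bp{e_1',\dots,e_n'}\in\mdl{R}$ with $\funeq{\mph[e]{h}}{e_i'}{\fun{\mph[e]{h}}{e_i}}$ for every $i$; applying the injectivity of $\mph[e]{h}$ componentwise then yields $e_i' = e_i$ for all $i$, so that $\bp{e_1,\dots,e_n}\in\mdl{R}$ and $\fun{\mdl{R}}{e_1,\dots,e_n}$ holds. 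The hard part --- really the single load-bearing hypothesis --- is precisely this injectivity: were $\mph[e]{h}$ an arbitrary map rather than an injection, two distinct entity-tuples could collapse onto the same image tuple, and membership of $\bp{\fun{\mph[e]{h}}{e_1},\dots,\fun{\mph[e]{h}}{e_n}}$ in $\mdl{R}'$ would no longer force $\bp{e_1,\dots,e_n}$ itself into $\mdl{R}$, breaking the equivalence. Because Lemma~\ref{lem:bij} guarantees that distinct entities map to distinct basis vectors, this danger never arises, and the biconditional closes.
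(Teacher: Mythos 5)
Your proof is correct, and it rests on the same load-bearing fact as the paper's --- the injectivity of \mph[e]{h} from Lemma~\ref{lem:bij} --- but your construction of \f{\mdl{R}'} runs in the opposite direction. You define \f{\mdl{R}'} as the direct image (pushforward) of \mdl{R} under componentwise application of \mph[e]{h}; the paper instead spends injectivity up front to obtain a left-inverse \f{\mph[e]{h}^{-1}} on \img{\mph[e]{h}}, and defines \f{\mdl{R}'} as the pullback of \mdl{R} along that inverse: \fun{\mdl{R}'}{\vv[1]{v},\dots,\vv[n]{v}} holds iff every \f{\vv[i]{v}} lies in \img{\mph[e]{h}} and \fun{\mdl{R}}{\fun{\mph[e]{h}^{-1}}{\vv[1]{v}},\dots,\fun{\mph[e]{h}^{-1}}{\vv[n]{v}}} holds, with falsity stipulated off the image. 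The two definitions carve out exactly the same subset of \f{\mdl[{\mdl[e]{D}}]{S}^n}, so nothing is at stake extensionally, but the verification work is distributed differently: in the paper both directions of the biconditional collapse to the single substitution \funeq{\mph[e]{h}^{-1}}{\fun{\mph[e]{h}}{e_i}}{e_i}, whereas in your argument the forward direction is definitionally trivial and injectivity is spent in the backward direction, identifying the witnessing preimage tuple \f{\bp{e_1',\dots,e_n'}} with \f{\bp{e_1,\dots,e_n}}. Your phrasing has the expository advantage of isolating exactly where injectivity is indispensable --- the pushforward relation exists for an arbitrary map, and it is only the backward implication that would fail without injectivity --- while the paper's inverse-based definition is the template that gets reused verbatim in the subsequent material (Lemma~\ref{lem:fun} and the corollaries on unary and binary relations), so it integrates more directly with what follows.
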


\begin{proof}

	Note that we define \f{\mdl{R}'} on \f{\mdl[{\mdl[e]{D}}]{S}^n}, the \f{n}-fold Cartesian product of \f{\mdl[{\mdl[e]{D}}]{S}}. From Lemma~\ref{lem:bij}, \mph[e]{h} is injective, and so has a left-inverse:\begin{align*}
		\funeq{\mph[e]{h}^{-1}}{\fun{\mph[e]{h}}{e_i}}{e_i},\quad \forall i=1,2, \dots, n .
	\end{align*}
	
	Then, for all \f{\vv[1]{v},\vv[2]{v},\dots,\vv[n]{v}\in \f{\mdl[{\mdl[e]{D}}]{S}}}:\begin{align*}
		\fun{\mdl{R}'}{\vv[1]{v},\vv[2]{v},\dots,\vv[n]{v}}= \begin{cases}\fun{\mdl{R}}{\fun{\mph[e]{h}^{-1}}{\vv[1]{v}},\fun{\mph[e]{h}^{-1}}{\vv[2]{v}},\dots,\fun{\mph[e]{h}^{-1}}{\vv[n]{v}}}, & \text { if } \vv[i]{v} \in \img{\mph[e]{h}^{-1}} \text { for all } i \\ 0, & \text { otherwise. }\end{cases}
	\end{align*}

	Let \f{e_1,e_2,\dots,e_n\in \mdl[e]{D}}. Suppose, then, that we have that \fun{\mdl{R}}{e_1,e_2,\dots,e_n} holds in \mdl[ext]{M}. Consider \fun{\mdl{R}'}{\fun{\mph[e]{h}}{e_1},\fun{\mph[e]{h}}{e_2},\dots,\fun{\mph[e]{h}}{e_n}}. We have, by definition of \f{\mdl{R}'}:\begin{align*}
		\fun{\mdl{R}'}{\fun{\mph[e]{h}}{e_1},\fun{\mph[e]{h}}{e_2},\dots,\fun{\mph[e]{h}}{e_n}} = \fun{\mdl{R}}{\fun{\mph[e]{h}^{-1}}{\fun{\mph[e]{h}}{e_1}},\fun{\mph[e]{h}^{-1}}{\fun{\mph[e]{h}}{e_2}},\dots,\fun{\mph[e]{h}^{-1}}{\fun{\mph[e]{h}}{e_n}}}.
	\end{align*}
	
	Since \funeq{\mph[e]{h}^{-1}}{\fun{\mph[e]{h}}{e_i}}{e_i} for all \f{i}, this simplifies to:\begin{align*}
		\begin{aligned}
			\fun{\mdl{R}'}{\fun{\mph[e]{h}}{e_1},\fun{\mph[e]{h}}{e_2},\dots,\fun{\mph[e]{h}}{e_n}} & =\fun{\mdl{R}}{\fun{\mph[e]{h}^{-1}}{\fun{\mph[e]{h}}{e_1}},\fun{\mph[e]{h}^{-1}}{\fun{\mph[e]{h}}{e_2}},\dots,\fun{\mph[e]{h}^{-1}}{\fun{\mph[e]{h}}{e_n}}} \\
			& =\fun{\mdl{R}}{e_1,e_2,\dots,e_n} .
		\end{aligned}
	\end{align*}
	
	Now, given that \fun{\mdl{R}}{e_1,e_2,\dots,e_n} is true by our initial assumption that it holds in \mdl[ext]{M}, it follows that \fun{\mdl{R}'}{\fun{\mph[e]{h}}{e_1},\fun{\mph[e]{h}}{e_2},\dots,\fun{\mph[e]{h}}{e_n}} is true in \mdl[{\mdl[e]{D}}]{S}.
	
	Conversely, suppose \fun{\mdl{R}'}{\fun{\mph[e]{h}}{e_1},\fun{\mph[e]{h}}{e_2},\dots,\fun{\mph[e]{h}}{e_n}} holds in \mdl[{\mdl{S}}]{M}. By the definition of \f{\mdl{R}'}:\begin{align*}
		\fun{\mdl{R}'}{\fun{\mph[e]{h}}{e_1},\fun{\mph[e]{h}}{e_2},\dots,\fun{\mph[e]{h}}{e_n}} = \fun{\mdl{R}}{\fun{\mph[e]{h}^{-1}}{\fun{\mph[e]{h}}{e_1}},\fun{\mph[e]{h}^{-1}}{\fun{\mph[e]{h}}{e_2}},\dots,\fun{\mph[e]{h}^{-1}}{\fun{\mph[e]{h}}{e_n}}}.
	\end{align*}
	
	Again, since \funeq{\mph[e]{h}^{-1}}{\fun{\mph[e]{h}}{e_i}}{e_i} for all \f{i}, we have:\begin{align*}
		\fun{\mdl{R}'}{\fun{\mph[e]{h}}{e_1},\fun{\mph[e]{h}}{e_2},\dots,\fun{\mph[e]{h}}{e_n}} = \fun{\mdl{R}}{e_1,e_2,\dots,e_n}.
	\end{align*}
	
	Given that \fun{\mdl{R}'}{\fun{\mph[e]{h}}{e_1},\fun{\mph[e]{h}}{e_2},\dots,\fun{\mph[e]{h}}{e_n}} is true, it follows that \fun{\mdl{R}}{e_1,e_2,\dots,e_n} is true in \mdl[ext]{M}.
	
	For all \f{e_1,e_2,\dots,e_n\in \mdl[e]{D}}:\begin{align*}
		\fun{\mdl{R}}{e_1,e_2,\dots,e_n}\Longleftrightarrow \fun{\mdl{R}'}{\fun{\mph[e]{h}}{e_1},\fun{\mph[e]{h}}{e_2},\dots,\fun{\mph[e]{h}}{e_n}}.
	\end{align*}
	
	Therefore, the mapping \mph[e]{h} preserves the \f{n}-ary relation \mdl{R} from \mdl[ext]{M} to \mdl[{\mdl{S}}]{M}.

\end{proof}

The truth of the relation among entities in \mdl[ext]{M} is exactly reflected by the truth of the relation among their vector representations in \mdl[{\mdl{S}}]{M}, which, in turn, holds for any \f{n}-ary relation (\ie, unary, binary, or higher arity).

\begin{corollary}
	Let \f{\mdl{R}\subseteq \mdl[e]{D}} be a unary relation. Then \f{\mdl{R}'\subseteq\mdl[{\mdl[e]{D}}]{S}} is defined by:\begin{align*}
		\fun{\mdl{R}'}{\vv{v}} = \fun{\mdl{R}}{\fun{\mph[e]{h}^{-1}}{\vv{v}}}.
	\end{align*}
	For \f{e\in\mdl[e]{D}}:\begin{align*}
		\fun{\mdl{R}}{e} = \fun{\mdl{R}'}{\fun{\mph[e]{h}}{e}}.
	\end{align*}
\end{corollary}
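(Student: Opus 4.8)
The plan is to derive this corollary as the immediate specialization of Lemma~\ref{lem:rel} to arity \f{n=1}, since a unary relation is nothing more than an \f{n}-ary relation with a single argument slot. First I would set \f{n=1} throughout the construction given in the proof of Lemma~\ref{lem:rel}. The piecewise definition of \f{\mdl{R}'} there collapses: a one-element tuple \bp{\vv{v}} is just the vector \vv{v}, so the nontrivial branch reads \f{\fun{\mdl{R}'}{\vv{v}} = \fun{\mdl{R}}{\fun{\mph[e]{h}^{-1}}{\vv{v}}}} whenever \f{\vv{v}\in\img{\mph[e]{h}}}, which is precisely the definition of \f{\mdl{R}'} recorded in the corollary.

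Next I would invoke the biconditional already established. At \f{n=1}, Lemma~\ref{lem:rel} gives \f{\fun{\mdl{R}}{e}\Longleftrightarrow\fun{\mdl{R}'}{\fun{\mph[e]{h}}{e}}} for every \f{e\in\mdl[e]{D}}. To recover the displayed equality rather than a mere biconditional, I would evaluate \f{\mdl{R}'} directly at \f{\vv{v}=\fun{\mph[e]{h}}{e}}: by the left-inverse property \f{\funeq{\mph[e]{h}^{-1}}{\fun{\mph[e]{h}}{e}}{e}} supplied by the injectivity in Lemma~\ref{lem:bij}, we get \f{\fun{\mdl{R}'}{\fun{\mph[e]{h}}{e}} = \fun{\mdl{R}}{\fun{\mph[e]{h}^{-1}}{\fun{\mph[e]{h}}{e}}} = \fun{\mdl{R}}{e}}. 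Reading each relation as its \bc{0,1}-valued characteristic function then turns the biconditional into the stated equality \f{\fun{\mdl{R}}{e} = \fun{\mdl{R}'}{\fun{\mph[e]{h}}{e}}}.

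There is no substantive obstacle: the result is a routine instantiation of the lemma. The only points meriting a sentence of care are, first, that restricting attention to arguments of the form \f{\fun{\mph[e]{h}}{e}} keeps every argument inside \img{\mph[e]{h}}, so the \qte{otherwise} branch of the definition of \f{\mdl{R}'} never fires and the clean formula applies; and second, that the injectivity of \mph[e]{h} established in Lemma~\ref{lem:bij} is exactly what guarantees the left-inverse \f{\mph[e]{h}^{-1}} is well-defined on the image, which is what licenses the final simplification.
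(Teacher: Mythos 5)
Your proposal is correct and matches the paper's intent exactly: the paper offers no separate proof for this corollary, presenting it as the immediate \f{n=1} instantiation of Lemma~\ref{lem:rel}, which is precisely your argument. Your added care about the left-inverse from Lemma~\ref{lem:bij} and about arguments staying inside \img{\mph[e]{h}} (so the \qte{otherwise} branch never fires) is exactly the implicit reasoning the paper relies on.
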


\begin{corollary}
	Let \f{\mdl{R}\subseteq \mdl[e]{D}\times \mdl[e]{D}} be a binary relation. Then \f{\mdl{R}'\subseteq\mdl[{\mdl[e]{D}}]{S}\times \mdl[{\mdl[e]{D}}]{S}} is defined by:\begin{align*}
		\fun{\mdl{R}'}{\vv[1]{v},\vv[2]{v}} = \fun{\mdl{R}}{\fun{\mph[e]{h}^{-1}}{\vv[1]{v}},\fun{\mph[e]{h}^{-1}}{\vv[2]{v}}}.
	\end{align*}
	For entities \f{e_1,e_2\in\mdl[e]{D}}:\begin{align*}
		\fun{\mdl{R}}{e_1,e_2} = \fun{\mdl{R}'}{\fun{\mph[e]{h}}{e_1},\fun{\mph[e]{h}}{e_2}}.
	\end{align*}
\end{corollary}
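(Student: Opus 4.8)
The plan is to obtain this statement as the special case \f{n=2} of Lemma~\ref{lem:rel}, so that essentially no new machinery is required beyond an instantiation. First I would specialize the construction of \f{\mdl{R}'} given in the proof of Lemma~\ref{lem:rel} to two arguments: declare \fun{\mdl{R}'}{\vv[1]{v},\vv[2]{v}} to equal \fun{\mdl{R}}{\fun{\mph[e]{h}^{-1}}{\vv[1]{v}},\fun{\mph[e]{h}^{-1}}{\vv[2]{v}}} whenever both \vv[1]{v} and \vv[2]{v} lie in \img{\mph[e]{h}}, and to be false (value \f{0}) otherwise. This is precisely the \f{n=2} instance of the case-definition already introduced, so \f{\mdl{R}'} is well-defined on \mdl[{\mdl[e]{D}}]{S}\times\mdl[{\mdl[e]{D}}]{S}, matching the displayed definition in the corollary statement.

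Next I would recall from Lemma~\ref{lem:bij} that \mph[e]{h} is injective and hence admits a left-inverse \mph[e]{h}^{-1} satisfying \funeq{\mph[e]{h}^{-1}}{\fun{\mph[e]{h}}{e}}{e} for every \f{e\in\mdl[e]{D}}. Fixing arbitrary \f{e_1,e_2\in\mdl[e]{D}}, I would then substitute \f{\vv[1]{v}=\fun{\mph[e]{h}}{e_1}} and \f{\vv[2]{v}=\fun{\mph[e]{h}}{e_2}} into the definition of \f{\mdl{R}'}. Because both substituted vectors are image points, the nontrivial branch of the case-definition is selected, and applying the left-inverse identity to each argument collapses each \fun{\mph[e]{h}^{-1}}{\fun{\mph[e]{h}}{e_k}} to \f{e_k}, yielding \funeq{\mdl{R}'}{\fun{\mph[e]{h}}{e_1},\fun{\mph[e]{h}}{e_2}}{\fun{\mdl{R}}{e_1,e_2}}. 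Since both \f{\mdl{R}} and \f{\mdl{R}'} are read as their \f{\bc{0,1}}-valued indicator functions, the biconditional supplied by Lemma~\ref{lem:rel} for the case \f{n=2} is exactly this equality of truth values, so the stated identity follows immediately.

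The only point requiring any care — and it is minor — is the domain bookkeeping: one must confirm that \fun{\mph[e]{h}}{e_1} and \fun{\mph[e]{h}}{e_2} genuinely belong to \img{\mph[e]{h}}, the set on which \mph[e]{h}^{-1} is defined, so that the substitution lands in the first branch of \f{\mdl{R}'} rather than defaulting to \f{0}. This membership is immediate from the definition of the image, so I expect no real obstacle; the entire conceptual content lives in Lemma~\ref{lem:rel}, and this corollary simply inherits it as a direct instantiation at \f{n=2} (with the unary case \f{n=1} handled identically in the preceding corollary).
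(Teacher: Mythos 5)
Your proposal is correct and matches the paper's approach exactly: the paper offers no separate proof for this corollary, presenting it as an immediate instantiation of Lemma~\ref{lem:rel} at \f{n=2}, which is precisely what you carry out (case-definition of \f{\mdl{R}'}, left-inverse from injectivity in Lemma~\ref{lem:bij}, and the image-membership check selecting the nontrivial branch). Your write-up simply makes explicit what the paper leaves implicit.
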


Set-membership is emphatically foundational in formal semantics; it underlies the interpretation of linguistic expressions as sets, enables predication through membership relations, forms the basis for quantification, and structures the type theory. The semantic denotation of \ol{cat}, for example, is interpreted as the set of all cats. It follows, then, to check respect of set-membership by \mph[e]{h}. 

\begin{corollary}\label{lem:set}
	Let \f{E\subseteq\mdl[e]{D}} be a subset in the extensional model \mdl[ext]{M}. There exists a corresponding subset \f{E'\subseteq\mdl[{\mdl[e]{D}}]{S}} in the vector space model \mdl[{\mdl{S}}]{M} such that for all \f{e\in\mdl[e]{D}}:\begin{align*}
		e\in E \Longleftrightarrow \fun{\mph[e]{h}}{e}\in E'.
	\end{align*}
\end{corollary}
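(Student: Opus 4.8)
The plan is to take \f{E'} to be the direct image of \f{E} under the entity map \mph[e]{h}, namely \f{E' = \bcdef{\fun{\mph[e]{h}}{e}}{e\in E}}, a subset of \f{\img{\mph[e]{h}}\subseteq\mdl[{\mdl[e]{D}}]{S}}. This is the canonical candidate: it transports \f{E} into the vector space along the very injection already used for entities and relations, so the construction inherits all the machinery of the preceding lemmas for free. With this definition the forward implication is immediate: if \f{e\in E}, then \f{\fun{\mph[e]{h}}{e}} lies in \f{E'} by construction, and nothing further is needed.

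The substance is the converse. First I would suppose \f{\fun{\mph[e]{h}}{e}\in E'}. By the definition of \f{E'} as an image there is some \f{e''\in E} with \f{\fun{\mph[e]{h}}{e} = \fun{\mph[e]{h}}{e''}}. I would then invoke Lemma~\ref{lem:bij}, which guarantees that \mph[e]{h} is injective: from the equality of images we conclude \f{e = e''}, whence \f{e\in E}. Combining the two directions yields the biconditional \f{e\in E \Longleftrightarrow \fun{\mph[e]{h}}{e}\in E'} for every \f{e\in\mdl[e]{D}}, as required.

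To keep the presentation uniform with the two preceding corollaries, I would alternatively regard the subset \f{E} as a unary relation—its characteristic predicate—so that the claim is precisely the \f{n=1} instance of Lemma~\ref{lem:rel}. Under that reading one sets \f{\fun{E'}{\vv{v}} = \fun{E}{\fun{\mph[e]{h}^{-1}}{\vv{v}}}} for \f{\vv{v}\in\img{\mph[e]{h}}} and \f{0} otherwise, and the biconditional then drops out of the relation-preservation argument verbatim via the left-inverse identity \f{\funeq{\mph[e]{h}^{-1}}{\fun{\mph[e]{h}}{e}}{e}}.

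I expect no genuine analytic difficulty here; the single load-bearing step is the converse, and it is exactly where injectivity is needed. Were \mph[e]{h} non-injective, the membership \f{\fun{\mph[e]{h}}{e}\in E'} could be witnessed by a preimage distinct from \f{e}, and membership would fail to transfer back. The entire content of the corollary therefore rests on Lemma~\ref{lem:bij} (together with the well-definedness of the left-inverse on \f{\img{\mph[e]{h}}} that it furnishes); everything else is routine bookkeeping.
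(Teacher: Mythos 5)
Your proposal is correct, and in fact it contains two proofs. The paper itself gives no explicit argument: it states this as a corollary immediately following Lemma~\ref{lem:rel} and its unary specialization, so the intended derivation is exactly your second route --- treat \f{E} as a unary relation, set \f{\fun{E'}{\vv{v}} = \fun{E}{\fun{\mph[e]{h}^{-1}}{\vv{v}}}} on \img{\mph[e]{h}} (and \f{0} elsewhere), and read off the biconditional from the left-inverse identity. Your primary route --- defining \f{E'} as the direct image \f{\bcdef{\fun{\mph[e]{h}}{e}}{e\in E}} and proving the converse by injectivity via Lemma~\ref{lem:bij} --- is a genuinely different, more elementary packaging: it bypasses Lemma~\ref{lem:rel} entirely and makes visible the single load-bearing fact, namely that a membership witness \f{e''\in E} with \f{\fun{\mph[e]{h}}{e}=\fun{\mph[e]{h}}{e''}} forces \f{e=e''}. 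The two constructions yield the same membership condition on elements of the form \fun{\mph[e]{h}}{e} (the direct image is precisely the support of the transported characteristic predicate inside \img{\mph[e]{h}}), so nothing is lost either way. What each buys: the relation-instance version keeps the statement uniform with the surrounding corollaries and inherits the paper's machinery verbatim; your direct-image version is self-contained, isolates injectivity as the essential hypothesis, and correctly diagnoses that the corollary would fail without it.
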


Semantic constructs based on set-membership (predicates, quantifiers) remain otherwise consistent when represented in the vector space model \mdl[{\mdl{S}}]{M}; any subset \f{E} of \mdl[e]{D} in \mdl[ext]{M} corresponds precisely to \f{E'} of \mdl[{\mdl[e]{D}}]{S} in \mdl[{\mdl{S}}]{M}. For example, union, intersection, difference, and power set are preserved under \mph[e]{h}.

\begin{corollary}
	Union preservation under \mph[e]{h} is given by:\begin{align*}
		\funeq{\mph[e]{h}}{E_1\cup E_2}{\fun{\mph[e]{h}}{E_1}\cup\fun{\mph[e]{h}}{E_2}}.
	\end{align*}
\end{corollary}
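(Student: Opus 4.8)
The plan is to prove the set identity by establishing the standard double inclusion between $\fun{\mph[e]{h}}{E_1\cup E_2}$ and $\fun{\mph[e]{h}}{E_1}\cup\fun{\mph[e]{h}}{E_2}$, where for any subset $E\subseteq\mdl[e]{D}$ the image is understood as $\fun{\mph[e]{h}}{E} = \bcdef{\fun{\mph[e]{h}}{e}}{e\in E}$. I note at the outset that this direction-of-image identity holds for an arbitrary function and does not require the injectivity established in Lemma~\ref{lem:bij}; injectivity only becomes essential for the dual statement on intersections, where $\fun{\mph[e]{h}}{E_1\cap E_2}=\fun{\mph[e]{h}}{E_1}\cap\fun{\mph[e]{h}}{E_2}$ can fail for non-injective maps. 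Since the present claim is about unions, the argument proceeds with the bare definition of the set image.

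For the forward inclusion, I would take an arbitrary $\vv{v}\in\fun{\mph[e]{h}}{E_1\cup E_2}$, so that $\vv{v}=\fun{\mph[e]{h}}{e}$ for some $e\in E_1\cup E_2$. Membership in a union gives $e\in E_1$ or $e\in E_2$, and in either case $\vv{v}$ lies in the corresponding image $\fun{\mph[e]{h}}{E_1}$ or $\fun{\mph[e]{h}}{E_2}$, hence in their union. For the reverse inclusion, I would take $\vv{v}\in\fun{\mph[e]{h}}{E_1}\cup\fun{\mph[e]{h}}{E_2}$ and argue symmetrically: without loss of generality $\vv{v}=\fun{\mph[e]{h}}{e}$ for some $e\in E_1$, and since $E_1\subseteq E_1\cup E_2$ we have $e\in E_1\cup E_2$, whence $\vv{v}\in\fun{\mph[e]{h}}{E_1\cup E_2}$.

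Combining the two inclusions yields the desired equality. There is no genuine obstacle here: the argument is purely set-theoretic and turns only on the logical equivalence between $e\in E_1$ or $e\in E_2$ and $e\in E_1\cup E_2$, together with the definition of the image of a set under $\mph[e]{h}$. The only point requiring care is notational, namely keeping the existential quantifier over preimages explicit so that the disjunction arising from the union is tracked correctly through each inclusion.
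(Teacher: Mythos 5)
Your proof is correct. Note that the paper itself states this corollary without an explicit proof: it is presented as a consequence of the set-membership correspondence \f{e\in E \Longleftrightarrow \fun{\mph[e]{h}}{e}\in E'} (Corollary~\ref{lem:set}, itself descended from Lemmas~\ref{lem:bij} and~\ref{lem:rel}), with \f{E'} effectively equal to the image \fun{\mph[e]{h}}{E}. Your route is the direct one---unwinding the definition of the image of a set and running the standard double inclusion---and it is essentially the same computation the paper's implicit derivation would perform, just self-contained rather than routed through Corollary~\ref{lem:set}. Your additional observation is the most valuable part of the proposal and is correct: the union identity holds for \emph{any} function, whereas the neighboring intersection corollary, \funeq{\mph[e]{h}}{E_1\cap E_2}{\fun{\mph[e]{h}}{E_1}\cap\fun{\mph[e]{h}}{E_2}}, genuinely requires the injectivity of \mph[e]{h} from Lemma~\ref{lem:bij} (for non-injective maps only the inclusion \f{\subseteq} survives). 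The paper does not flag this asymmetry, so your remark sharpens the logical dependency structure of this block of corollaries rather than merely reproducing it.
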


\begin{corollary}
	Intersection preservation under \mph[e]{h} is given by:\begin{align*}
		\funeq{\mph[e]{h}}{E_1\cap E_2}{\fun{\mph[e]{h}}{E_1}\cap\fun{\mph[e]{h}}{E_2}}.
	\end{align*}
\end{corollary}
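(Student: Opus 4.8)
The plan is to establish the set equality $\funeq{\mph[e]{h}}{E_1\cap E_2}{\fun{\mph[e]{h}}{E_1}\cap\fun{\mph[e]{h}}{E_2}}$ by proving mutual inclusion of the two sides, where both are understood as image sets living in $\mdl[{\mdl[e]{D}}]{S}$. Throughout I write $\fun{\mph[e]{h}}{E} = \bcdef{\fun{\mph[e]{h}}{e}}{e\in E}$ for the direct image of a subset $E\subseteq\mdl[e]{D}$, and the one structural fact I draw on is the injectivity of $\mph[e]{h}$ already secured in Lemma~\ref{lem:bij}.

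First I would prove the inclusion $\fun{\mph[e]{h}}{E_1\cap E_2}\subseteq\fun{\mph[e]{h}}{E_1}\cap\fun{\mph[e]{h}}{E_2}$, which holds for \emph{any} function and requires no hypothesis on $\mph[e]{h}$. Taking an arbitrary $\vv{v}\in\fun{\mph[e]{h}}{E_1\cap E_2}$, we have $\vv{v}=\fun{\mph[e]{h}}{e}$ for some $e\in E_1\cap E_2$; since $e\in E_1$ and $e\in E_2$, the vector $\vv{v}$ lies in both $\fun{\mph[e]{h}}{E_1}$ and $\fun{\mph[e]{h}}{E_2}$, hence in their intersection.

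The reverse inclusion $\fun{\mph[e]{h}}{E_1}\cap\fun{\mph[e]{h}}{E_2}\subseteq\fun{\mph[e]{h}}{E_1\cap E_2}$ is where the real content sits, and it is precisely here that injectivity is indispensable. Suppose $\vv{v}\in\fun{\mph[e]{h}}{E_1}\cap\fun{\mph[e]{h}}{E_2}$; then there exist $e_1\in E_1$ and $e_2\in E_2$ with $\vv{v}=\fun{\mph[e]{h}}{e_1}=\fun{\mph[e]{h}}{e_2}$. Because $\mph[e]{h}$ is injective by Lemma~\ref{lem:bij}, the equality $\funeq{\mph[e]{h}}{e_1}{\fun{\mph[e]{h}}{e_2}}$ forces $e_1=e_2$; calling this common entity $e$, we get $e\in E_1$ and $e\in E_2$, so $e\in E_1\cap E_2$ and therefore $\vv{v}=\fun{\mph[e]{h}}{e}\in\fun{\mph[e]{h}}{E_1\cap E_2}$. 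Combining the two inclusions yields the asserted equality.

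The main obstacle is conceptual rather than computational: the forward inclusion is automatic, but the reverse one genuinely \emph{fails} for non-injective maps, since distinct entities $e_1\neq e_2$ could otherwise collapse to a shared image vector. The entire substance of the corollary is thus the appeal to injectivity, which was established in Lemma~\ref{lem:bij} through the linear independence of the basis vectors $\vv[i]{b}$, so no new machinery is needed. Alternatively, one could reach the same conclusion purely from the membership equivalence of Corollary~\ref{lem:set} applied in turn to $E_1$, $E_2$, and $E_1\cap E_2$.
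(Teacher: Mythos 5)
Your proof is correct, and it is precisely the deduction the paper intends but leaves implicit: the corollary is stated without proof as a consequence of the injectivity of \f{\mph[e]{h}} (Lemma~\ref{lem:bij}) and the set-membership correspondence (Corollary~\ref{lem:set}), which are exactly the ingredients you use. Your observation that the reverse inclusion genuinely requires injectivity --- and fails without it --- correctly identifies the only nontrivial content of the statement.
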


\begin{corollary}
	Set difference preservation under \mph[e]{h} is given by:\begin{align*}
		\funeq{\mph[e]{h}}{E_1\setminus E_2}{\fun{\mph[e]{h}}{E_1}\setminus\fun{\mph[e]{h}}{E_2}}.
	\end{align*}
\end{corollary}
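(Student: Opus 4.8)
The plan is to prove the claimed identity $\funeq{\mph[e]{h}}{E_1\setminus E_2}{\fun{\mph[e]{h}}{E_1}\setminus\fun{\mph[e]{h}}{E_2}}$ by establishing the two set inclusions separately, since equality of these images splits into $\fun{\mph[e]{h}}{E_1\setminus E_2}\subseteq\fun{\mph[e]{h}}{E_1}\setminus\fun{\mph[e]{h}}{E_2}$ together with the reverse containment. The decisive hypothesis throughout is the injectivity of $\mph[e]{h}$ established in Lemma~\ref{lem:bij}; this is exactly what separates set-difference preservation from the weaker behavior of an arbitrary map, and flagging where it is consumed is the one conceptual point of the argument.

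First I would dispatch the reverse inclusion $\fun{\mph[e]{h}}{E_1}\setminus\fun{\mph[e]{h}}{E_2}\subseteq\fun{\mph[e]{h}}{E_1\setminus E_2}$, which in fact holds for any function and needs no injectivity. Taking an image point $\vv{v}$ that lies in $\fun{\mph[e]{h}}{E_1}$ but not in $\fun{\mph[e]{h}}{E_2}$, I would produce a preimage $e\in E_1$ with $\funeq{\mph[e]{h}}{e}{\vv{v}}$ and argue $e\notin E_2$: were $e\in E_2$, then $\fun{\mph[e]{h}}{e}$ would land in $\fun{\mph[e]{h}}{E_2}$, contradicting the choice of $\vv{v}$. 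Hence $e\in E_1\setminus E_2$ and $\vv{v}\in\fun{\mph[e]{h}}{E_1\setminus E_2}$.

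The forward inclusion $\fun{\mph[e]{h}}{E_1\setminus E_2}\subseteq\fun{\mph[e]{h}}{E_1}\setminus\fun{\mph[e]{h}}{E_2}$ is where injectivity does the real work, and I expect this to be the only step with any content. Given $\vv{v}=\fun{\mph[e]{h}}{e}$ with $e\in E_1\setminus E_2$, membership in $\fun{\mph[e]{h}}{E_1}$ is immediate; the obstacle is ruling out $\vv{v}\in\fun{\mph[e]{h}}{E_2}$. Supposing it held, I would write $\vv{v}=\fun{\mph[e]{h}}{e'}$ for some $e'\in E_2$, so that $\funeq{\mph[e]{h}}{e}{\fun{\mph[e]{h}}{e'}}$, and invoke injectivity from Lemma~\ref{lem:bij} to force $e=e'$, contradicting $e\notin E_2$. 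This yields $\vv{v}\notin\fun{\mph[e]{h}}{E_2}$ and completes the containment.

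Combining the two inclusions gives the asserted equality. The argument is set-theoretically routine; the single item worth emphasizing is that only the forward direction consumes injectivity, so the identity would fail for a non-injective map, which is precisely why Lemma~\ref{lem:bij} is indispensable here, exactly as it was in the preservation of intersections.
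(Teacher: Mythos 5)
Your proof is correct: the double-inclusion argument is the standard verification, and you rightly isolate injectivity (Lemma~\ref{lem:bij}) as necessary only for the forward inclusion $\fun{\mph[e]{h}}{E_1\setminus E_2}\subseteq\fun{\mph[e]{h}}{E_1}\setminus\fun{\mph[e]{h}}{E_2}$, the reverse holding for arbitrary maps. The paper states this result as a corollary with no explicit proof, treating it as an immediate consequence of the injectivity of $\mph[e]{h}$, so your writeup simply supplies the routine argument the paper leaves implicit, in exactly the intended way.
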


\begin{corollary}
	The power set \fun{\mdl{P}}{\mdl[e]{D}} corresponds to the power set \fun{\mdl{P}}{\mdl[{\mdl[e]{D}}]{S}}:\begin{align*}
		\funeq{\mph[e]{h}}{\fun{\mdl{P}}{\mdl[e]{D}}}{\fun{\mdl{P}}{\mdl[{\mdl[e]{D}}]{S}}}.
	\end{align*}
\end{corollary}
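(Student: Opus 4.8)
The plan is to treat \mph[e]{h} as inducing a map on power sets via the forward-image operation and to show this induced map is a bijection, leveraging the set-membership equivalence already secured in Corollary~\ref{lem:set}. Concretely, for each subset \f{E\subseteq\mdl[e]{D}} I would set \f{\funeq{\mph[e]{h}}{E}{\bcdef{\fun{\mph[e]{h}}{e}}{e\in E}}}, exactly the convention under which the union, intersection, and difference corollaries above are stated. Corollary~\ref{lem:set} guarantees that \f{\fun{\mph[e]{h}}{E}\subseteq\img{\mph[e]{h}}}, so the induced map carries \fun{\mdl{P}}{\mdl[e]{D}} into \fun{\mdl{P}}{\img{\mph[e]{h}}}.

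First I would verify injectivity of the induced map. Suppose \f{\fun{\mph[e]{h}}{E_1}=\fun{\mph[e]{h}}{E_2}} for subsets \f{E_1,E_2\subseteq\mdl[e]{D}}. Since \mph[e]{h} is injective (Lemma~\ref{lem:bij}) it admits the left-inverse \f{\mph[e]{h}^{-1}} used in Lemma~\ref{lem:rel}; applying \f{\mph[e]{h}^{-1}} pointwise and using \f{\funeq{\mph[e]{h}^{-1}}{\fun{\mph[e]{h}}{e}}{e}} recovers \f{E_1=E_2}. Next I would verify surjectivity onto \fun{\mdl{P}}{\img{\mph[e]{h}}}: given any \f{F\subseteq\img{\mph[e]{h}}}, its pointwise preimage \f{E=\bcdef{\fun{\mph[e]{h}^{-1}}{\vv{v}}}{\vv{v}\in F}} is a subset of \mdl[e]{D} satisfying \f{\fun{\mph[e]{h}}{E}=F}. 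Together these give a bijection between \fun{\mdl{P}}{\mdl[e]{D}} and \fun{\mdl{P}}{\img{\mph[e]{h}}}. I would then note, invoking the union, intersection, and difference corollaries, that this bijection respects the Boolean operations, so it is in fact an isomorphism of the power-set lattices, and that the membership equivalence \f{e\in E\Longleftrightarrow\fun{\mph[e]{h}}{e}\in\fun{\mph[e]{h}}{E}} is precisely the instance of Corollary~\ref{lem:set} with \f{E'=\fun{\mph[e]{h}}{E}}.

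The main obstacle is not the routine bijection argument but the precise reading of the claimed equality \f{\funeq{\mph[e]{h}}{\fun{\mdl{P}}{\mdl[e]{D}}}{\fun{\mdl{P}}{\mdl[{\mdl[e]{D}}]{S}}}}. As stressed in the note on injectivity preceding the proof of \mph[e]{h}, the map \mph[e]{h} is a bijection only onto its image \img{\mph[e]{h}}, not onto the full vector space \mdl[{\mdl[e]{D}}]{S}; correspondingly, the induced power-set map lands in and exhausts \fun{\mdl{P}}{\img{\mph[e]{h}}}, which equals \fun{\mdl{P}}{\mdl[{\mdl[e]{D}}]{S}} only when \mph[e]{h} is surjective (equivalently, under the finiteness or infinite-dimensionality hypotheses discussed there). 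I would therefore phrase the conclusion as an isomorphism \f{\fun{\mdl{P}}{\mdl[e]{D}}\cong\fun{\mdl{P}}{\img{\mph[e]{h}}}}, reading the corollary's \fun{\mdl{P}}{\mdl[{\mdl[e]{D}}]{S}} as shorthand for the power set of the image, so that the statement remains correct without assuming surjectivity.
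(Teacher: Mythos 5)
Your proposal is correct, and in fact it supplies more than the paper does: the paper states this corollary with no proof at all, presenting it (together with the union, intersection, and difference corollaries) as an immediate consequence of the set-membership corollary for \f{E\subseteq\mdl[e]{D}}. Your elaboration via the forward-image map \f{E\mapsto\bcdef{\fun{\mph[e]{h}}{e}}{e\in E}}, with injectivity from Lemma~\ref{lem:bij} and surjectivity onto \fun{\mdl{P}}{\img{\mph[e]{h}}} via the left-inverse, is exactly the argument the paper leaves implicit. More importantly, your ``main obstacle'' paragraph identifies a genuine defect in the statement itself: as written, the equality \f{\funeq{\mph[e]{h}}{\fun{\mdl{P}}{\mdl[e]{D}}}{\fun{\mdl{P}}{\mdl[{\mdl[e]{D}}]{S}}}} cannot hold in any nontrivial case, since \img{\mph[e]{h}} is a set of basis vectors and hence a proper subset of the vector space \mdl[{\mdl[e]{D}}]{S}, so \fun{\mdl{P}}{\mdl[{\mdl[e]{D}}]{S}} contains subsets (\eg, any set containing a non-basis vector) that are not images of any \f{E\subseteq\mdl[e]{D}}. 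Your repair --- reading the right-hand side as \fun{\mdl{P}}{\img{\mph[e]{h}}} and concluding a lattice isomorphism \f{\fun{\mdl{P}}{\mdl[e]{D}}\cong\fun{\mdl{P}}{\img{\mph[e]{h}}}} --- is the correct fix and is consistent with how the paper itself hedges injectivity versus surjectivity in the note following Theorem~\ref{thm:homo}. One small caution: your parenthetical suggesting the equality is recovered ``under the finiteness or infinite-dimensionality hypotheses discussed there'' is too generous. Those hypotheses only remove the cardinality obstruction to \emph{some} bijection existing; the particular \mph[e]{h} constructed in the paper sends entities to linearly independent basis vectors, so it is never surjective onto \mdl[{\mdl[e]{D}}]{S} (for instance, \f{\vv[i]{b}\oplus\vv[j]{b}} is not in the image), and your image-based reading is needed in every case, not just the infinite one.
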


There is nothing intrinsic to geometric properties (\eg, alignment, magnitude) in \mdl[{\mdl[e]{D}}]{S} that necessarily ties them together independently of their shared source domain relation; the set relation in the source domain \mdl[e]{D} determines the structure of \f{E'}, and any commonality among the vectors in \f{E'} is entirely a reflection of the original subset \f{E}. Thus, the extensional semantics drives the correspondences above; the vector space serves as a faithful representation.

We may further elaborate and extend to other structures such as the characteristic function\footnote{How truth vectors are represented in \mdl[{\mdl{S}}]{M} is discussed below; suffice it to say that we may represent them as basis vectors or as binary-valued functions.} to simply probe for set membership and indicator vectors to represent set membership in a more compact form.

\begin{corollary}
	Define the \term{characteristic function} \f{\chi_S:\mdl[e]{D}\rightarrow\bc{0,1}} by:\begin{align*}
		\fun{\chi_S}{e}= \begin{cases}1, &  e \in S \\ 0, &  e \notin S\end{cases}
	\end{align*}
	
	Similarly, define \f{\chi_{S'}:\mdl[{\mdl[e]{D}}]{S}\rightarrow\bc{0,1}} by:\begin{align*}
		\fun{\chi_{S'}}{\vv{v}}= \begin{cases}1, &  \vv{v} \in S' \\ 0, &  \vv{v} \notin S'\end{cases}
	\end{align*}
	
	The characteristic functions satisfy:\begin{align*}
		\funeq{\chi_S}{e}{\fun{\chi_{S'}}{\fun{\mph[e]{h}}{e}}}.
	\end{align*}

\end{corollary}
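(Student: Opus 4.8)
The plan is to obtain this identity as an immediate consequence of Corollary~\ref{lem:set}, which has already established that set membership is preserved exactly under \mph[e]{h}: for a subset \f{S\subseteq\mdl[e]{D}} and its vector-space correspondent \f{S'\subseteq\mdl[{\mdl[e]{D}}]{S}}, one has \f{e\in S \Longleftrightarrow \fun{\mph[e]{h}}{e}\in S'}. Since each characteristic function is nothing more than the numerical encoding of a membership predicate, the claim reduces to a case analysis on whether \f{e} belongs to \f{S}.

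First I would fix an arbitrary \f{e\in\mdl[e]{D}} and split into two exhaustive cases. If \f{e\in S}, then by definition \funeq{\chi_S}{e}{1}; by the forward direction of Corollary~\ref{lem:set} we get \f{\fun{\mph[e]{h}}{e}\in S'}, so \funeq{\chi_{S'}}{\fun{\mph[e]{h}}{e}}{1}, and the two sides agree. If instead \f{e\notin S}, then \funeq{\chi_S}{e}{0}; by the contrapositive of the reverse direction of Corollary~\ref{lem:set} we get \f{\fun{\mph[e]{h}}{e}\notin S'}, whence \funeq{\chi_{S'}}{\fun{\mph[e]{h}}{e}}{0}. Since both functions take values only in \bc{0,1} and agree in each case, the identity \funeq{\chi_S}{e}{\fun{\chi_{S'}}{\fun{\mph[e]{h}}{e}}} holds for every \f{e}.

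There is no substantive obstacle here: the entire content of the statement is already carried by the membership equivalence of Corollary~\ref{lem:set}. The only point deserving care is to make explicit that \f{S'} is taken to be the specific image-subset associated to \f{S} by that corollary, and not an arbitrary subset of the vector space, so that the biconditional applies exactly; once this identification is fixed, the agreement of the characteristic functions is automatic.
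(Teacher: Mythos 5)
Your proof is correct and follows exactly the route the paper intends: the paper states this corollary as an immediate consequence of Corollary~\ref{lem:set} (set-membership preservation under \f{\mph[e]{h}}), offering no separate argument, and your two-case analysis on \f{e\in S} versus \f{e\notin S} is precisely that deduction spelled out. Your closing remark—that \f{S'} must be the specific subset associated to \f{S} by the membership equivalence, not an arbitrary subset of the vector space—is also the right point to flag, since the biconditional only holds for that choice.
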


\begin{corollary}
	For finite \mdl[e]{D}, subsets \f{S} can be represented by \term{indicator vectors} in \mdl[{\mdl[e]{D}}]{S}, which are vectors that have a \f{1} in positions corresponding to elements in the subset and \f{0} elsewhere.
	
	Each entity corresponds to a basis vector \vv[i]{b} and a subset \f{S} corresponds to the sum:\begin{align*}
		\vv[S]{v}=\sum_{e \in S} \fun{\mph[e]{h}}{e}.
	\end{align*}

\end{corollary}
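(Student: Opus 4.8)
The plan is to verify that the sum $\vv[S]{v}=\sum_{e\in S}\fun{\mph[e]{h}}{e}$, once expressed in the basis $\bcdef{\vv[i]{b}}{i\in I}$, has coordinates that coincide exactly with the characteristic function $\chi_S$ of the preceding corollary, so that $\vv[S]{v}$ is precisely the indicator vector of $S$. First I would invoke the finiteness of \mdl[e]{D}: fixing an enumeration $e_1,\dots,e_n$ of its elements identifies $I$ with $\bc{1,\dots,n}$, and the mapping \mph[e]{h} sends each $e_i$ to the distinct basis vector $\vv[i]{b}$ (distinctness following from Lemma~\ref{lem:bij}), so that $\bc{\vv[1]{b},\dots,\vv[n]{b}}$ is a basis of \mdl[{\mdl[e]{D}}]{S} and the space is $n$-dimensional. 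Consequently every vector admits a unique coordinate representation in this basis.

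The central computation then expands the defining sum by regrouping it over the enumeration:
\begin{align*}
	\vv[S]{v}=\sum_{e\in S}\fun{\mph[e]{h}}{e}=\sum_{e_i\in S}\vv[i]{b}=\sum_{i=1}^{n}\fun{\chi_S}{e_i}\,\vv[i]{b}.
\end{align*}
Reading off the $i$-th coordinate, the coefficient of $\vv[i]{b}$ in $\vv[S]{v}$ equals \fun{\chi_S}{e_i}, which is $1$ when $e_i\in S$ and $0$ otherwise. This is exactly the assertion that $\vv[S]{v}$ carries a $1$ in each position corresponding to a member of $S$ and a $0$ elsewhere, i.e.\ that it is the indicator vector of $S$.

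Finally I would confirm that the representation is faithful. Because the $\vv[i]{b}$ are linearly independent, distinct coefficient tuples yield distinct vectors; since each subset $S\subseteq\mdl[e]{D}$ determines a unique $\bc{0,1}$-valued tuple through $\chi_S$, the assignment $S\mapsto\vv[S]{v}$ is a bijection from $\fun{\mdl{P}}{\mdl[e]{D}}$ onto the set of $\bc{0,1}$-coordinate vectors of \mdl[{\mdl[e]{D}}]{S}, so no information about $S$ is lost. The main obstacle — and precisely the reason for the finiteness hypothesis — is well-definedness of the sum: for infinite \mdl[e]{D} the expression $\sum_{e\in S}\fun{\mph[e]{h}}{e}$ would be an infinite formal combination of basis vectors, which need not denote an element of the space without a topology and a convergence argument. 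Restricting to finite \mdl[e]{D} makes the sum a genuine finite linear combination and collapses the remaining steps to routine coordinate bookkeeping.
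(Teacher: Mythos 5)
Your proof is correct. Note, though, that the paper supplies no proof of this corollary at all: under its stated conventions a corollary is a simple deduction from earlier results, and this one is left implicit as a consequence of the basis construction \funeq{\mph[e]{h}}{e_i}{\vv[i]{b}} from Lemma~\ref{lem:bij} together with the set-membership and characteristic-function corollaries immediately preceding it. Your coordinate expansion \f{\vv[S]{v}=\sum_{e_i\in S}\vv[i]{b}=\sum_{i=1}^{n}\fun{\chi_S}{e_i}\,\vv[i]{b}} is exactly the deduction the paper intends, so in substance the two agree. What you add goes usefully beyond what the paper records: first, the faithfulness observation that \f{S\mapsto\vv[S]{v}} is injective (distinct subsets give distinct coefficient tuples, hence distinct vectors, by linear independence of the \vv[i]{b}); second, the explanation of why the hypothesis of finite \mdl[e]{D} is genuinely needed --- for an infinite subset the expression \f{\sum_{e\in S}\fun{\mph[e]{h}}{e}} is an infinite formal combination of basis vectors that need not converge to any element of the space, so the finiteness assumption, which the paper states but never motivates, is doing real work in making \vv[S]{v} well-defined.
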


The mapping \mph[e]{h} preserves, as seen, set-membership, which, in turn, transfers properties, operations, and relations that involve subsets in \mdl[e]{D} into \mdl[{\mdl{S}}]{M}. Having shown this, we may access structures and properties defined over \mdl[e]{D} necessary for \mdl[{\mdl{S}}]{M}, such as measures, distributions, and sigma-algebras.

\subsection{Proof for \mph[t]{h}}

The characterization of \mph[t]{h} is consistent with the vectorization of truth values according to vector logic \cite{WestphalHardy2005,MIZRAJI1992179,mizr2008}. Let \f{\mdl[t]{D} = \bc{1,0}} be the set of truth values\footnote{In principle, we may expand beyond a binary truth-value system; the idea remains the same. We remain with a boolean truth value here for simplicity.} in the extensional model \mdl[ext]{M}. Let \mdl[{\mdl[t]{D}}]{S} be a \f{q}-dimensional (real) vector space \ff[q]{R}, where\footnote{Technically, the choice of \f{q} is a design decision; for simplicity, we consider \f{q=1,2}, though, in principle, higher dimensions are also acceptable.} \f{q\geq 1}. We define two distinct, normalized vectors in \mdl[{\mdl[t]{D}}]{S}: \bc{\vv[1]{b},\vv[0]{b}}, where \vv[1]{b} and \vv[0]{b} are orthonormal basis vectors representing true and false, respectively. These vectors satisfy normalization:\begin{align*}
	\abs{\vv[1]{b}} = 1, \quad \abs{\vv[0]{b}} = 1,
\end{align*}

where \abs{\cdot} denotes the Euclidean norm, and satisfy orthogonality:\begin{align*}
	\inn{\vv[1]{b}}{\vv[0]{b}} = 0,
\end{align*}

where \inn{\cdot}{\cdot} denotes the inner product in \ff[q]{R}.

Let \f{\mdl[t]{D} = \bc{1,0}}. We define the mapping \f{\mph[t]{h}:\mdl[t]{D}\rightarrow\img{\mph[t]{h}}\subseteq\mdl[{\mdl[t]{D}}]{S}} by:\begin{align*}
	\fun{\mph[t]{h}}{1}=\vv[1]{b}, \quad \fun{\mph[t]{h}}{0}=\vv[0]{b},
\end{align*}

which, for \f{q=2}, has the following (normalized and orthogonal) forms:\begin{align*}
	\fun{\mph[t]{h}}{1}=\vv[1]{b} =\left[\begin{array}{c}
		1 \\
		0
	\end{array}\right], \quad \fun{\mph[t]{h}}{0}=\vv[0]{b}=\left[\begin{array}{c}
		0 \\
		1
	\end{array}\right].
\end{align*}

\begin{lemma}\label{lem:bijt}
	The mapping \f{\mph[t]{h}:\mdl[t]{D}\rightarrow\img{\mph[t]{h}}\subseteq\mdl[{\mdl[t]{D}}]{S}} is an injective mapping between \mdl[t]{D} and \mdl[{\mdl[t]{D}}]{S}.
\end{lemma}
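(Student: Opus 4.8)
The plan is to mirror the argument already used for Lemma~\ref{lem:bij}, exploiting the fact that the truth-value domain \f{\mdl[t]{D} = \bc{1,0}} contains only two elements. For a map out of a two-element set, injectivity is equivalent to the single requirement that the two images be distinct, namely \f{\fun{\mph[t]{h}}{1}\neq\fun{\mph[t]{h}}{0}}, that is, \f{\vv[1]{b}\neq\vv[0]{b}}. So the entire proof reduces to establishing the distinctness of the two vectors assigned to true and false.

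To establish this distinctness, I would invoke the orthonormality conditions imposed in the construction of \mph[t]{h} rather than merely asserting it. The cleanest route is a short argument by contradiction: suppose \f{\vv[1]{b}=\vv[0]{b}}. Then their inner product would satisfy \f{\inn{\vv[1]{b}}{\vv[0]{b}} = \inn{\vv[1]{b}}{\vv[1]{b}} = \abs{\vv[1]{b}}^2 = 1} by the normalization condition \f{\abs{\vv[1]{b}}=1}. This contradicts the orthogonality condition \f{\inn{\vv[1]{b}}{\vv[0]{b}}=0}. Hence \f{\vv[1]{b}\neq\vv[0]{b}}, so the distinct inputs \f{1} and \f{0} are sent to distinct outputs, and \mph[t]{h} is injective. (The \f{q=2} coordinate forms given just above the lemma make the same point concretely, but the inner-product argument has the virtue of covering every admissible \f{q} uniformly.)

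There is no substantial obstacle here: unlike the cardinality considerations that forbid a surjection onto all of \mdl[{\mdl[t]{D}}]{S}, injectivity from a finite (indeed two-element) domain is immediate once distinctness of images is secured. The one point requiring care is to \emph{derive} that distinctness from the stated orthonormality of \f{\vv[1]{b},\vv[0]{b}} rather than presupposing it, so that the lemma rests on the construction already given. As with Lemma~\ref{lem:bij}, one should also be explicit that the claim is injectivity onto the image \img{\mph[t]{h}} and not a bijection with the ambient space \mdl[{\mdl[t]{D}}]{S}; and one may note in passing that the orthogonality condition itself tacitly requires \f{q\geq 2}, since a one-dimensional space admits no two orthonormal vectors.
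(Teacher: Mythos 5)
Your proof is correct and takes essentially the same route as the paper's: reduce injectivity on the two-element domain \f{\mdl[t]{D}=\bc{1,0}} to the distinctness of the two image vectors \vv[1]{b} and \vv[0]{b}, which the paper likewise grounds in their orthonormality. Your only additions are to spell out the one-line contradiction deriving \f{\vv[1]{b}\neq\vv[0]{b}} from normalization and orthogonality (the paper merely asserts this parenthetically as \qte{orthonormal and distinct}) and to note that orthogonality tacitly requires \f{q\geq 2}, both of which are harmless refinements of the same argument.
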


\begin{proof}
	
	Suppose \f{t_1,t_2 \in \mdl[t]{D}} and \f{t_1\neq t_2}. Since \f{\mdl[t]{D}=\bc{1,0}}, the possible distinct pairs are either \bp{1,0} or \bp{0,1}. It follows that \f{\fun{\mph[t]{h}}{t_1}\neq\fun{\mph[t]{h}}{t_2}} since \f{\fun{\mph[t]{h}}{0}\neq\fun{\mph[t]{h}}{1}} (our truth values are mutually exclusive) and \f{\vv[1]{b}\neq\vv[0]{b}} (orthonormal and distinct). Therefore, since \f{\fun{\mph[t]{h}}{t_1}\neq\fun{\mph[t]{h}}{t_2}} whenever \f{t_1 \neq t_2}, the mapping \mph[t]{h} is injective.

	\end{proof}

\begin{lemma}\label{lem:fun}
	For every \f{n}-ary logical function \f{f:\mdl[t]{D}^n\rightarrow\mdl[t]{D}} in the extensional model \mdl[ext]{M}, there exists a function \f{f':\img{\mph[t]{h}}^n \rightarrow \img{\mph[t]{h}}} in the vector space model \mdl[{\mdl{S}}]{M} such that for all \f{t_1,t_2,\dots,t_n\in\mdl[t]{D}}:\begin{align*}
		\funeq{\mph[t]{h}}{\fun{f}{t_1,t_2,\dots,t_n}}{\fun{f'}{\fun{\mph[t]{h}}{t_1},\fun{\mph[t]{h}}{t_2},\dots,\fun{\mph[t]{h}}{t_n}}},
	\end{align*}
	
	where \f{\mph[t]{h}:\mdl[t]{D}\rightarrow\img{\mph[t]{h}}\subseteq\mdl[{\mdl[t]{D}}]{S}}, and \mdl[{\mdl[t]{D}}]{S} is taken to be \ff[q]{R}.

\end{lemma}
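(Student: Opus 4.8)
The plan is to reuse, in the propositional setting, exactly the inverse-image construction that drove Lemma~\ref{lem:rel}. By Lemma~\ref{lem:bijt} the map \mph[t]{h} is injective, so every vector of \img{\mph[t]{h}} has a unique preimage and \mph[t]{h} admits a left inverse \f{\mph[t]{h}^{-1}:\img{\mph[t]{h}}\rightarrow\mdl[t]{D}} with \f{\funeq{\mph[t]{h}^{-1}}{\fun{\mph[t]{h}}{t}}{t}} for each \f{t\in\mdl[t]{D}}. First I would \emph{define} the candidate \f{f'} on the \f{n}-fold product \f{\img{\mph[t]{h}}^n} by pulling each argument back along \f{\mph[t]{h}^{-1}}, evaluating the logical function, and pushing the result forward along \mph[t]{h}:
\begin{align*}
	\fun{f'}{\vv[1]{v},\dots,\vv[n]{v}} = \fun{\mph[t]{h}}{\fun{f}{\fun{\mph[t]{h}^{-1}}{\vv[1]{v}},\dots,\fun{\mph[t]{h}^{-1}}{\vv[n]{v}}}}.
\end{align*}

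Second, I would check that this is well-typed as a map \f{\img{\mph[t]{h}}^n\rightarrow\img{\mph[t]{h}}}: each \f{\vv[i]{v}\in\img{\mph[t]{h}}} has a unique preimage in \f{\mdl[t]{D}=\bc{1,0}}, the function \f{f} returns a value in \mdl[t]{D}, and \mph[t]{h} sends that value back into \img{\mph[t]{h}}. The required commutation is then immediate by substitution: putting \f{\vv[i]{v}=\fun{\mph[t]{h}}{t_i}} and applying the left-inverse identity collapses each \f{\fun{\mph[t]{h}^{-1}}{\fun{\mph[t]{h}}{t_i}}} to \f{t_i}, so that \f{\fun{f'}{\fun{\mph[t]{h}}{t_1},\dots,\fun{\mph[t]{h}}{t_n}}} reduces to \f{\fun{\mph[t]{h}}{\fun{f}{t_1,\dots,t_n}}}, which is the claimed identity.

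Third, to honor the vector-logic framing of \cite{WestphalHardy2005,MIZRAJI1992179,mizr2008} rather than leaving \f{f'} a bare set-map, I would exhibit it as an honest vector-space operation. Since \f{\img{\mph[t]{h}}=\bc{\vv[1]{b},\vv[0]{b}}} is an orthonormal pair, each \f{n}-ary connective is realizable by a fixed matrix \f{M_f} acting on the \f{n}-fold Kronecker product \f{\fun{\mph[t]{h}}{t_1}\otimes\dots\otimes\fun{\mph[t]{h}}{t_n}}, recovering the monadic and dyadic operators of vector logic as special cases; orthonormality guarantees that the correct column of \f{M_f} is selected for each combination of truth inputs.

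The existence half of the argument is routine and structurally identical to Lemma~\ref{lem:rel}, so I do not expect difficulty there; the only care needed is domain bookkeeping, keeping \f{f'} defined precisely on \f{\img{\mph[t]{h}}^n} and not on the ambient \ff[q]{R} (on which it would not in general be defined by the formula above). The genuine obstacle, and the part carrying the real content, is the third step: verifying that the abstractly-defined \f{f'} coincides with a concrete linear-algebraic operation on Kronecker products, so that the correspondence is truly a \emph{vector} operation and not merely a relabeling of a finite truth table.
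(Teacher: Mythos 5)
Your first two steps are exactly the paper's proof: use the injectivity from Lemma~\ref{lem:bijt} to get a left inverse, define \f{f'} on \f{\img{\mph[t]{h}}^n} by pulling back along \f{\mph[t]{h}^{-1}}, applying \f{f}, and pushing forward along \mph[t]{h}, then collapse \f{\mph[t]{h}^{-1}\circ\mph[t]{h}} by substitution to obtain the claimed identity. Your third step, however, misplaces where the content lies: the lemma only asserts the existence of a set-map \f{f':\img{\mph[t]{h}}^n\rightarrow\img{\mph[t]{h}}} satisfying the commutation equation, so the matrix/Kronecker realization is not required and is not part of the paper's proof either --- the paper develops those matrix representations separately, in the definitions and examples \emph{following} the lemma, as illustrations that vector logic respects the framework the lemma establishes; your first two steps alone already constitute a complete proof.
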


\begin{proof}\label{pr:fun}
	Recall from Lemma~\ref{lem:bijt}, we have established that \f{\mdl[t]{D} = \bc{1,0}} representing truth values true and false. We define our injective mapping \f{\mph[t]{h}:\mdl[t]{D}\rightarrow\img{\mph[t]{h}}\subseteq\ff[q]{R}}, and let \f{\img{\mph[t]{h}}=\bc{\vv[1]{b},\vv[0]{b}}}, where:\begin{align*}
		\fun{\mph[t]{h}}{1} = \vv[1]{b}=\left[\begin{array}{l}
			1 \\
			0
		\end{array}\right], \quad \fun{\mph[t]{h}}{0} = \vv[0]{b}=\left[\begin{array}{l}
			0 \\
			1
		\end{array}\right],
	\end{align*}
	
	and since \mph[t]{h} is injective, it has a left-inverse \f{\mph[t]{h}^{-1}} well-defined on \img{\mph[t]{h}}.
	
	We define \f{f':\img{\mph[t]{h}}^n \rightarrow \img{\mph[t]{h}}} such that for all \f{\vv[1]{v},\vv[2]{v},\dots,\vv[n]{v}\in\img{\mph[t]{h}}} by:\begin{align*}
		\funeq{f'}{\vv[1]{v},\vv[2]{v},\dots,\vv[n]{v}}{\fun{\mph[t]{h}}{\fun{f}{\fun{\mph[t]{h}^{-1}}{\vv[1]{v}},\fun{\mph[t]{h}^{-1}}{\vv[2]{v}},\dots,\fun{\mph[t]{h}^{-1}}{\vv[n]{v}}}}},
	\end{align*}
	
	or, laboriously:\begin{align*}
		\funeq{f'}{\fun{\mph[t]{h}}{t_1},\fun{\mph[t]{h}}{t_2},\dots,\fun{\mph[t]{h}}{t_n}}{\fun{\mph[t]{h}}{\fun{f}{\fun{\mph[t]{h}^{-1}}{\fun{\mph[t]{h}}{t_1}},\fun{\mph[t]{h}^{-1}}{\fun{\mph[t]{h}}{t_2}},\dots,\fun{\mph[t]{h}^{-1}}{\fun{\mph[t]{h}}{t_n}}}}}.
	\end{align*}
	
	We show directly that, for all \f{t_1,t_2,\dots,t_n\in\mdl[t]{D}}:\begin{align*}
		\fun{\mph[t]{h}}{\fun{f}{t_1,t_2,\dots,t_n}} & = \fun{\mph[t]{h}}{\fun{f}{\fun{\mph[t]{h}^{-1}}{\fun{\mph[t]{h}}{t_1}},\fun{\mph[t]{h}^{-1}}{\fun{\mph[t]{h}}{t_2}},\dots,\fun{\mph[t]{h}^{-1}}{\fun{\mph[t]{h}}{t_n}}}}\\
		& = \fun{f'}{\fun{\mph[t]{h}}{t_1},\fun{\mph[t]{h}}{t_2},\dots,\fun{\mph[t]{h}}{t_n}} 
		,
	\end{align*}
	
	and we have that: \f{\funeq{\mph[t]{h}}{\fun{f}{t_1,t_2,\dots,t_n}}{\fun{f'}{\vv[1]{v},\vv[2]{v},\dots,\vv[n]{v}} } = \fun{f'}{\fun{\mph[t]{h}}{t_1},\fun{\mph[t]{h}}{t_2},\dots,\fun{\mph[t]{h}}{t_n}}}. Therefore, the function \f{f'} is well-defined on \f{\img{\mph[t]{h}}^n} and maps into \img{\mph[t]{h}}, preserving the operation \f{f} within the vector space model.

\end{proof}

While \f{f'} is defined via \mph[t]{h} and \f{f}, we can consider how \f{f'} operates within \mdl[{\mdl[t]{D}}]{S}. Given that \f{\mdl[{\mdl[t]{D}}]{S}=\bc{\vv[1]{b},\vv[0]{b}}}, and that \mph[t]{h} maps truth values to these vectors, \f{f'} essentially maps tuples of these basis vectors to one of the basis vectors.

\begin{remark}
	Since \img{\mph[t]{h}} is not a full vector space (here, it is a set of two vectors), we can consider what extending \f{f'} might look like. Since \f{f'} only operates on elements of \img{\mph[t]{h}}, our \f{f'} is fully defined for all necessary inputs. If we were to consider \mdl[{\mdl[t]{D}}]{S} as the full space \ff[2]{R}, then \f{f'} would need to be defined on vectors outside of \img{\mph[t]{h}}, which may not correspond to truth values, and so lose meaningful semantics.
\end{remark}

The design of the mapping \mph[t]{h} is such that it is compatible with representing monadic and dyadic logical operators using matrices in vector logic. To illustrate this and verify that these representations respect the framework established in Lemma~\ref{lem:fun}, we proceed with some definitions and examples of monadic, dyadic, triadic (ternary), and \f{n}-ary operators represented as matrices acting on the vector representations of truth values. See \cite{mizr2008,MIZRAJI1992179} for standard references, \cite{mizraji1996} for derivations of monadic and dyadic operators, and \cite{WestphalHardy2005} for elaboration and useful visualizations.

\begin{definition}\label{def:mon}
	A \term{monadic operator} \f{\mon:\mdl[t]{D}\rightarrow\mdl[t]{D}} in the extensional model \mdl[ext]{M} is a rule defined by a truth table with one argument, and is represented in the vector space model \mdl[{\mdl{S}}]{M} as \f{\mon'} by a \f{2\times 2} matrix \f{M} such that for all \f{t\in\mdl[t]{D}}:\begin{align*}
		\fun{\mon'}{\fun{\mph[t]{h}}{t}}=M\cdot \fun{\mph[t]{h}}{t} = \fun{\mph[t]{h}}{\fun{\mon}{t}}.
	\end{align*}
\end{definition}

Definition~\ref{def:mon} is essentially a specification of the representation of a function shown in Lemma~\ref{lem:fun}, where we established that for any logical function \f{f:\mdl[t]{D}\rightarrow\mdl[t]{D}}, there exists an operation \f{f':\mdl[{\mdl[t]{D}}]{S}\rightarrow\mdl[{\mdl[t]{D}}]{S}} satisfying: \funeq{\mph[t]{h}}{\fun{f}{t}}{\fun{f'}{\fun{\mph[t]{h}}{t}}}. An example of a monadic operator is negation, shown in Example~\ref{exe:neg}.

\begin{example}\label{exe:neg}
	In the extensional model \mdl[ext]{M}, the negation operator \f{\neg:\mdl[t]{D}\rightarrow\mdl[t]{D}} is defined by, for \f{t \mapsto s} (our \fun{\mph[t]{h}}{1}) and \f{f \mapsto n} (our \fun{\mph[t]{h}}{0}):\begin{align*}
		\funeq{\neg}{1}{0}, \quad \funeq{\neg}{0}{1},
	\end{align*}
	
	which corresponds to the table:\begin{table}[H]
		\centering
		\begin{tabular}{@{}l|l@{}}
			\f{\neg}	&  \\ \midrule
			1	& 0  \\
			0	& 1  \\ 
		\end{tabular}
	\end{table}
	
	The negation operator corresponds to the matrix:\begin{align*}
		N=n s^{\top}+s n^{\top}=\left[\begin{array}{l}
			0 \\
			1
		\end{array}\right]\left[\begin{array}{ll}
			1 & 0
		\end{array}\right]+\left[\begin{array}{l}
			1 \\
			0
		\end{array}\right]\left[\begin{array}{ll}
			0 & 1
		\end{array}\right]=\left[\begin{array}{ll}
			0 & 1 \\
			1 & 0
		\end{array}\right] .
	\end{align*}
	
	For \f{t=1}:\begin{align*}
		\fun{\mon'}{\fun{\mph[t]{h}}{1}} & = N \cdot \fun{\mph[t]{h}}{1} \\
		& =\left[\begin{array}{ll}
			0 & 1 \\
			1 & 0
		\end{array}\right]\left[\begin{array}{l}
			1 \\
			0
		\end{array}\right] \\
		& = \left[\begin{array}{l}
			0 \\
			1
		\end{array}\right] \\
		& = \fun{\mph[t]{h}}{0} \\
		& = \fun{\mph[t]{h}}{\fun{\neg}{1}}.
	\end{align*}
	
	Similarly, for \f{t=0}:\begin{align*}
		\fun{\mon'}{\fun{\mph[t]{h}}{0}} & = N \cdot \fun{\mph[t]{h}}{0} \\
		& =\left[\begin{array}{ll}
			0 & 1 \\
			1 & 0
		\end{array}\right]\left[\begin{array}{l}
			0 \\
			1
		\end{array}\right] \\
		& = \left[\begin{array}{l}
			1 \\
			0
		\end{array}\right] \\
		& = \fun{\mph[t]{h}}{1} \\
		& = \fun{\mph[t]{h}}{\fun{\neg}{0}}.
	\end{align*}
\end{example}

\begin{definition}\label{def:dya}
	A \term{dyadic operator} \f{\dya:\bp{\mdl[t]{D}\times\mdl[t]{D}}\rightarrow\mdl[t]{D}} in the extensional model \mdl[ext]{M} is a rule defined by a truth table with two arguments, and is represented in the vector space model \mdl[{\mdl{S}}]{M} as \f{\dya'} by a \f{4\times 2} matrix \f{M} such that for all \f{t_1,t_2\in\mdl[t]{D}}:\begin{align*}
		\fun{\dya'}{\fun{\mph[t]{h}}{t_1},\fun{\mph[t]{h}}{t_2}}=M\cdot\bp{\fun{\mph[t]{h}}{t_1}\otimes\fun{\mph[t]{h}}{t_2}}  = \fun{\mph[t]{h}}{\fun{\dya}{t_1,t_2}},
	\end{align*}
	
	where \f{\otimes} denotes the tensor (Kronecker) product.
\end{definition}

\begin{example}\label{exe:dya}
	The conjunction operator \f{\wedge:\bp{\mdl[t]{D}\times\mdl[t]{D}}\rightarrow\mdl[t]{D}} is defined by:\begin{align*}
		\begin{aligned}
			&	\funeq{\wedge}{1,1}{1} \\
			&	\funeq{\wedge}{1,0}{0} \\
			&	\funeq{\wedge}{0,1}{0} \\
			&	\funeq{\wedge}{0,0}{0},
		\end{aligned}
	\end{align*}
	
	which corresponds to the table:\begin{table}[H]
		\centering
		\begin{tabular}{@{}l|ll@{}}
		\f{\wedge}	& 1 & 0 \\ \midrule
		1	& 1 & 0 \\
		0	& 0 & 0 \\ 
		\end{tabular}
	\end{table}
	
	The conjunction operator corresponds to the matrix:\begin{align*}
		C=s\bp{s \otimes s}^{\top}+n\bp{s \otimes n}^{\top}+n\bp{n \otimes s}^{\top}+n\bp{n \otimes n}^{\top},
	\end{align*}
	
	which, after some calculation, gives the form:\begin{align*}
		C=\left[\begin{array}{llll}
			1 & 0 & 0 & 0 \\
			0 & 1 & 1 & 1
		\end{array}\right].
	\end{align*}
	
	For \f{t_1,t_2\in\mdl[t]{D}}, we compute \f{\fun{\mph[t]{h}}{t_1}\otimes\fun{\mph[t]{h}}{t_2}}, and apply \f{C}. We consider all possible combinations.
	
	\begin{enumerate}
		\item For \f{t_1=1, t_2=1}, we have that \funeq{\mph[t]{h}}{1}{s} and \funeq{\mph[t]{h}}{1}{s} whose tensor product is \f{s\otimes s = \left[\begin{array}{l}
				1 \\
				0 \\
				0 \\
				0
			\end{array}\right]}, and the application of the operator \f{C} follows as:\begin{align*}
			\fun{\dya'}{s,s}=C \cdot\bp{s\otimes s}=\left[\begin{array}{llll}
				1 & 0 & 0 & 0 \\
				0 & 1 & 1 & 1
			\end{array}\right]\left[\begin{array}{l}
				1 \\
				0 \\
				0 \\
				0
			\end{array}\right]=\left[\begin{array}{l}
				1 \\
				0
			\end{array}\right]=s=\fun{\mph[t]{h}}{1} = \fun{\mph[t]{h}}{\fun{\wedge}{1,1}} .
		\end{align*}
		
		\item For \f{t_1=1, t_2=0}, we have that \funeq{\mph[t]{h}}{1}{s} and \funeq{\mph[t]{h}}{0}{n} whose tensor product is \f{s\otimes n = \left[\begin{array}{l}
				0 \\
				1 \\
				0 \\
				0
			\end{array}\right]}, and the application of the operator \f{C} follows as:\begin{align*}
			\fun{\dya'}{s,n}=C \cdot\bp{s\otimes n}=\left[\begin{array}{llll}
				1 & 0 & 0 & 0 \\
				0 & 1 & 1 & 1
			\end{array}\right]\left[\begin{array}{l}
				0 \\
				1 \\
				0 \\
				0
			\end{array}\right]=\left[\begin{array}{l}
				0 \\
				1
			\end{array}\right]=n=\fun{\mph[t]{h}}{0} = \fun{\mph[t]{h}}{\fun{\wedge}{1,0}} .
		\end{align*}
		
		\item For \f{t_1=0, t_2=1}, we have that \funeq{\mph[t]{h}}{0}{n} and \funeq{\mph[t]{h}}{1}{s} whose tensor product is \f{n\otimes s = \left[\begin{array}{l}
				0 \\
				0 \\
				1 \\
				0
			\end{array}\right]}, and the application of the operator \f{C} follows as:\begin{align*}
			\fun{\dya'}{n,s}=C \cdot\bp{n\otimes s}=\left[\begin{array}{llll}
				1 & 0 & 0 & 0 \\
				0 & 1 & 1 & 1
			\end{array}\right]\left[\begin{array}{l}
				0 \\
				0 \\
				1 \\
				0
			\end{array}\right]=\left[\begin{array}{l}
				0 \\
				1
			\end{array}\right]=n=\fun{\mph[t]{h}}{0} = \fun{\mph[t]{h}}{\fun{\wedge}{0,1}} .
		\end{align*}
		
		\item For \f{t_1=0, t_2=0}, we have that \funeq{\mph[t]{h}}{0}{n} and \funeq{\mph[t]{h}}{0}{n} whose tensor product is \f{n\otimes n = \left[\begin{array}{l}
				0 \\
				0 \\
				0 \\
				1
			\end{array}\right]}, and the application of the operator \f{C} follows as:\begin{align*}
			\fun{\dya'}{n,n}=C \cdot\bp{n\otimes n}=\left[\begin{array}{llll}
				1 & 0 & 0 & 0 \\
				0 & 1 & 1 & 1
			\end{array}\right]\left[\begin{array}{l}
				0 \\
				0 \\
				0 \\
				1
			\end{array}\right]=\left[\begin{array}{l}
				0 \\
				1
			\end{array}\right]=n=\fun{\mph[t]{h}}{0} = \fun{\mph[t]{h}}{\fun{\wedge}{0,0}} .
		\end{align*}
	\end{enumerate}
	
\end{example}

\begin{example}\label{exe:dya2}
	Similarly, other logical operators can be represented in \mdl[{\mdl{S}}]{M} via matrices. The matrix \f{D} representing disjunction can be constructed such that:\begin{align*}
		D=s\bp{s \otimes s}^{\top}+s\bp{s \otimes n}^{\top}+s\bp{n \otimes s}^{\top}+n\bp{n \otimes n}^{\top} .
	\end{align*}
	
	The matrix \f{L} representing implication can be defined using the matrices for disjunction, negation, and the identity:\begin{align*}
		L=D \cdot\bp{N\otimes I}.
	\end{align*}
	
	Matrices for equivalence \f{E} and exclusive OR \f{X} can be defined similarly:\begin{align*}
		\begin{aligned}
			&	E=s\bp{s \otimes s}^{\top}+n\bp{s \otimes n}^{\top}+n\bp{n \otimes s}^{\top}+s\bp{n \otimes n}^{\top}, \\
			&	X=N E.
		\end{aligned}
	\end{align*}

\end{example}

\begin{definition}\label{def:tri}
	A \term{triadic} or \term{ternary operator} \f{\tri:\bp{\mdl[t]{D}\times\mdl[t]{D}\times\mdl[t]{D}}\rightarrow\mdl[t]{D}} in the extensional model \mdl[ext]{M} is a rule defined by a truth table with three arguments, and is represented in the vector space model \mdl[{\mdl{S}}]{M} as \f{\tri'} by an \f{8\times 2} matrix \f{M} such that for all \f{t_1,t_2,t_3\in\mdl[t]{D}}:\begin{align*}
		\fun{\tri'}{\fun{\mph[t]{h}}{t_1},\fun{\mph[t]{h}}{t_2},\fun{\mph[t]{h}}{t_3}}=M\cdot\bp{\fun{\mph[t]{h}}{t_1}\otimes\fun{\mph[t]{h}}{t_2}\otimes\fun{\mph[t]{h}}{t_3}}  = \fun{\mph[t]{h}}{\fun{\tri}{t_1,t_2,t_3}},
	\end{align*}
	
	where \f{\otimes} denotes the tensor (Kronecker) product.
\end{definition}

\begin{example}\label{exe:tri}
	The ternary conditional operator \cond{} (if-then-else) takes three truth values \f{t_1,t_2,t_3 \in \mdl[t]{D}} and returns a truth value based on the condition \f{t_1}:\begin{align*}
		\fun{\cond}{t_1, t_2, t_3}= \begin{cases}t_2 & \text { if } t_1=1 \\ t_3 & \text { if } t_1=0\end{cases}.
	\end{align*}
	
	For such triadic operators, we represent the combination of three truth values:\begin{align*}
		\fun{\mph[t]{h}}{t_1}\otimes\fun{\mph[t]{h}}{t_2}\otimes\fun{\mph[t]{h}}{t_3}\in\ff[2\times2\times2]{R},
	\end{align*}
	
	in which we flatten the tensor product into a vector in \ff[8]{R} as \vv[{t_1,t_2,t_3}]{v}. The conditional ternary operator, then corresponds to:\begin{align*}
		\fun{\tri'}{\fun{\mph[t]{h}}{t_1},\fun{\mph[t]{h}}{t_2},\fun{\mph[t]{h}}{t_3}} = M \cdot \vv[{t_1,t_2,t_3}]{v}=\fun{\mph[t]{h}}{\fun{\cond}{t_1,t_2,t_3}}.
	\end{align*}
	
	We need to define the matrix \f{M} such that it correctly maps each possible combination of \f{t_1,t_2,t_3} to the appropriate output, and since each \f{t_i} can be 0 or 1, there are \f{2^3 = 8} such possible combinations; we index the combinations as follows:\begin{table}[H]
		\centering
		\caption{Tabulated combinations for \f{M}}
		\label{tab:tabm}
		\begin{tabular}{@{}llllll@{}}
			\toprule
			\f{j}-th index	& \f{t_1} & \f{t_2} & \f{t_3} & \fun{\cond}{t_1,t_2,t_3} & \fun{\mph[t]{h}}{\fun{\cond}{t_1,t_2,t_3}} \\ \midrule
			1	& 1 & 1 & 1 & \f{t_2=1} & \f{\left[\begin{array}{l}
					1 \\
					0
				\end{array}\right]} \\
			2	& 1 & 1 & 0 & \f{t_2=1} & \f{\left[\begin{array}{l}
					1 \\
					0
				\end{array}\right]} \\
			3	& 1 & 0 & 1 & \f{t_2=0} & \f{\left[\begin{array}{l}
					0 \\
					1
				\end{array}\right]} \\
			4	& 1 & 0 & 0 & \f{t_2=0} & \f{\left[\begin{array}{l}
					0 \\
					1
				\end{array}\right]} \\
			5	& 0 & 1 & 1 & \f{t_3=1} & \f{\left[\begin{array}{l}
					1 \\
					0
				\end{array}\right]} \\
			6	& 0 & 1 & 0 & \f{t_3=0} & \f{\left[\begin{array}{l}
					0 \\
					1
				\end{array}\right]} \\
			7	& 0 & 0 & 1 & \f{t_3=1} & \f{\left[\begin{array}{l}
					1 \\
					0
				\end{array}\right]} \\
			8	& 0 & 0 & 0 & \f{t_3=0} & \f{\left[\begin{array}{l}
					0 \\
					1
				\end{array}\right]} \\ \bottomrule
		\end{tabular}
	\end{table}

	For each combination, we define a column in \f{M^{\top}} corresponding to \vv[{t_1,t_2,t_3}]{v}, mapping to \fun{\mph[t]{h}}{\fun{\cond}{t_1,t_2,t_3}}. Let \vv[i]{e} be the standard basis vectors in \ff[8]{R}, where \f{i} ranges from \f{1} to \f{8}. We then define \f{M} such that for \f{t=1} (cases 1-4), \funeq{\cond}{t_1,t_2,t_3}{t_2}, and for \f{t=0} (cases 5-8), \funeq{\cond}{t_1,t_2,t_3}{t_3}.
	
	Therefore, for each index \f{j}, the \f{j}-th column of \f{M} corresponds to \fun{\mph[t]{h}}{\fun{\cond}{t_1,t_2,t_3}}, and we can write the matrix \f{M} explicitly as:\begin{align*}
		M=\left[\begin{array}{llllllll}
			M_{1,1} & M_{1,2} & M_{1,3} & M_{1,4} & M_{1,5} & M_{1,6} & M_{1,7} & M_{1,8} \\
			M_{2,1} & M_{2,2} & M_{2,3} & M_{2,4} & M_{2,5} & M_{2,6} & M_{2,7} & M_{2,8}
		\end{array}\right],
	\end{align*}
	
	each \f{M_{ij}} corresponds to the mapping from \f{e_j} to \fun{\mph[t]{h}}{\fun{\cond}{t_1,t_2,t_3}}. Then, for each \f{j}, we determine \f{t_1,t_2,t_3} corresponding to the index \f{j}, and compute \fun{\cond}{t_1,t_2,t_3}; so, we set \f{M_{:,j}=\fun{\mph[t]{h}}{\fun{\cond}{t_1,t_2,t_3}}}.
	
	For the first case that \f{t_1=1,t_2=1,t_3=1}, we have that \funeq{\cond}{1,1,1}{1}, and \f{M_{:, 1}=\fun{\mph[t]{h}}{1}=\left[\begin{array}{l}
			1 \\
			0
		\end{array}\right]}. Likewise, for the fifth case that \f{t_1=0,t_2=1,t_3=1}, we have that \funeq{\cond}{0,1,1}{1}, and \f{M_{:, 5}=\fun{\mph[t]{h}}{1}=\left[\begin{array}{l}
		1 \\
		0
		\end{array}\right]}. In this way, \f{M_{1j}} is the first element of \fun{\mph[t]{h}}{\fun{\cond}{t_1,t_2,t_3}} for the \f{j}-th combination, and \f{M_{2j}} is the second element. By filling out all entries, we obtain the complete matrix \f{M}:\begin{align*}
		M=\left[\begin{array}{llllllll}
			M_{1,1} & M_{1,2} & M_{1,3} & M_{1,4} & M_{1,5} & M_{1,6} & M_{1,7} & M_{1,8} \\
			M_{2,1} & M_{2,2} & M_{2,3} & M_{2,4} & M_{2,5} & M_{2,6} & M_{2,7} & M_{2,8}
		\end{array}\right]=\left[\begin{array}{llllllll}
			1 & 1 & 0 & 0 & 1 & 0 & 1 & 0 \\
			0 & 0 & 1 & 1 & 0 & 1 & 0 & 1
		\end{array}\right]
	\end{align*}
	
	We apply this matrix in the following way. For each combination, we compute the tensor product:\begin{align*}
		\vv[t_1,t_2,t_3]{v}=\fun{\mph[t]{h}}{t_1}\otimes\fun{\mph[t]{h}}{t_2}\otimes\fun{\mph[t]{h}}{t_3} \in \ff[8]{R},
	\end{align*}
	
	and compute:\begin{align*}
		\funeq{\cond'}{\fun{\mph[t]{h}}{t_1},\fun{\mph[t]{h}}{t_2},\fun{\mph[t]{h}}{t_3}}{M\cdot \vv[t_1,t_2,t_3]{v}}.
	\end{align*}
	
	Computing for verification, we have:
	
	\begin{itemize}
		\item for the first case, where \f{t_1=1,t_2=1,t_3=1}, we have that \funeq{\mph[t]{h}}{1}{\left[\begin{array}{l}
				1 \\
				0
			\end{array}\right]}, \funeq{\mph[t]{h}}{1}{\left[\begin{array}{l}
			1 \\
			0
			\end{array}\right]}, and \funeq{\mph[t]{h}}{1}{\left[\begin{array}{l}
			1 \\
			0
			\end{array}\right]}, from which we compute \vv[1,1,1]{v}:\begin{align*}
			\vv[1,1,1]{v}=\fun{\mph[t]{h}}{1} \otimes \fun{\mph[t]{h}}{1} \otimes \fun{\mph[t]{h}}{1}=\left[\begin{array}{l}
				1 \\
				0 \\
				0 \\
				0 \\
				0 \\
				0 \\
				0 \\
				0
			\end{array}\right].
		\end{align*}
		
		Applying the matrix \f{M}, we find that:\begin{align*}
			M \cdot \vv[1,1,1]{v}=\left[\begin{array}{llllllll}
				1 & 1 & 0 & 0 & 1 & 0 & 1 & 0 \\
				0 & 0 & 1 & 1 & 0 & 1 & 0 & 1
			\end{array}\right]\left[\begin{array}{l}
				1 \\
				0 \\
				0 \\
				0 \\
				0 \\
				0 \\
				0 \\
				0
			\end{array}\right]=\left[\begin{array}{l}
				1 \\
				0
			\end{array}\right],
		\end{align*}
		
		which we compare \fun{\mph[t]{h}}{\fun{\cond}{1,1,1}}:\begin{align*}
			\fun{\mph[t]{h}}{\fun{\cond}{1,1,1}}=\fun{\mph[t]{h}}{1}=\left[\begin{array}{l}
				1 \\
				0
			\end{array}\right].
		\end{align*}
		
		The output matches, and so confirms that \f{M} correctly represents the operator for the first case.
		
		\item for the third case, where \f{t_1=1,t_2=0,t_3=1}, we have that \funeq{\mph[t]{h}}{1}{\left[\begin{array}{l}
				1 \\
				0
			\end{array}\right]}, \funeq{\mph[t]{h}}{0}{\left[\begin{array}{l}
				0 \\
				1
			\end{array}\right]}, and \funeq{\mph[t]{h}}{1}{\left[\begin{array}{l}
				1 \\
				0
			\end{array}\right]}, from which we compute \vv[1,0,1]{v}:\begin{align*}
			\vv[1,0,1]{v}=\fun{\mph[t]{h}}{1} \otimes \fun{\mph[t]{h}}{0} \otimes \fun{\mph[t]{h}}{1}=\left[\begin{array}{l}
				0 \\
				0 \\
				1 \\
				0 \\
				0 \\
				0 \\
				0 \\
				0
			\end{array}\right].
		\end{align*}
		
		Applying the matrix \f{M}, we find that:\begin{align*}
			M \cdot \vv[1,0,1]{v}=\left[\begin{array}{llllllll}
				1 & 1 & 0 & 0 & 1 & 0 & 1 & 0 \\
				0 & 0 & 1 & 1 & 0 & 1 & 0 & 1
			\end{array}\right]\left[\begin{array}{l}
				0 \\
				0 \\
				1 \\
				0 \\
				0 \\
				0 \\
				0 \\
				0
			\end{array}\right]=\left[\begin{array}{l}
				0 \\
				1
			\end{array}\right],
		\end{align*}
		
		which we compare \fun{\mph[t]{h}}{\fun{\cond}{1,0,1}}:\begin{align*}
			\fun{\mph[t]{h}}{\fun{\cond}{1,0,1}}=\fun{\mph[t]{h}}{0}=\left[\begin{array}{l}
				0 \\
				1
			\end{array}\right].
		\end{align*}
		
		The output matches, and so confirms that \f{M} correctly represents the operator for the third case.
		
		\item for the fifth case, where \f{t_1=0,t_2=1,t_3=1}, we have that \funeq{\mph[t]{h}}{0}{\left[\begin{array}{l}
				0 \\
				1
			\end{array}\right]}, \funeq{\mph[t]{h}}{1}{\left[\begin{array}{l}
				1 \\
				0
			\end{array}\right]}, and \funeq{\mph[t]{h}}{1}{\left[\begin{array}{l}
				1 \\
				0
			\end{array}\right]}, from which we compute \vv[0,1,1]{v}:\begin{align*}
			\vv[0,1,1]{v}=\fun{\mph[t]{h}}{0} \otimes \fun{\mph[t]{h}}{1} \otimes \fun{\mph[t]{h}}{1}=\left[\begin{array}{l}
				0 \\
				0 \\
				0 \\
				0 \\
				1 \\
				0 \\
				0 \\
				0
			\end{array}\right].
		\end{align*}
		
		Applying the matrix \f{M}, we find that:\begin{align*}
			M \cdot \vv[0,1,1]{v}=\left[\begin{array}{llllllll}
				1 & 1 & 0 & 0 & 1 & 0 & 1 & 0 \\
				0 & 0 & 1 & 1 & 0 & 1 & 0 & 1
			\end{array}\right]\left[\begin{array}{l}
				0 \\
				0 \\
				0 \\
				0 \\
				1 \\
				0 \\
				0 \\
				0
			\end{array}\right]=\left[\begin{array}{l}
				1 \\
				0
			\end{array}\right],
		\end{align*}
		
		which we compare \fun{\mph[t]{h}}{\fun{\cond}{1,0,1}}:\begin{align*}
			\fun{\mph[t]{h}}{\fun{\cond}{0,1,1}}=\fun{\mph[t]{h}}{1}=\left[\begin{array}{l}
				1 \\
				0
			\end{array}\right].
		\end{align*}
		
		The output matches, and so confirms that \f{M} correctly represents the operator for the fifth case.

	\end{itemize}
	
	Similarly, for the remaining cases, we can verify and confirm that the matrix \f{M} consistently produces the correct output for the ternary conditional operator.

\end{example}

\begin{definition}\label{def:nop}
	An \term{\f{n}-ary operator} \f{\nry:\mdl[t]{D}^n\rightarrow\mdl[t]{D}} in the extensional model \mdl[ext]{M} is a rule defined by a truth table with \f{n} arguments, and is represented in the vector space model \mdl[{\mdl{S}}]{M} as \f{\nry'} by an \f{2^n \times 2} matrix \f{M} such that for all \f{t_1,t_2,\dots,t_n\in\mdl[t]{D}}:\begin{align*}
		\fun{\nry'}{\fun{\mph[t]{h}}{t_1},\fun{\mph[t]{h}}{t_2},\dots,\fun{\mph[t]{h}}{t_n}}=M\cdot\bp{\bigotimes_{i=1}^n \fun{\mph[t]{h}}{t_i}}  = \fun{\mph[t]{h}}{\fun{\nry}{t_1,t_2,\dots,t_n}},
	\end{align*}
	
	where \f{\otimes} denotes the tensor (Kronecker) product.
\end{definition}

\begin{remark}\label{rmk:recipe}
	The recipe for a general \f{n}-ary operator is as follows:
	
	\begin{enumerate}
		\item list all \f{2^n} possible combinations of truth values \bp{t_1,t_2,\dots,t_n}
		\item for each combination:
		\begin{enumerate}
			\item compute \fun{\nry}{t_1,t_2,\dots,t_n}
			\item compute the tensor product \f{\vv[t_1,t_2,\dots,t_n]{v}=\bigotimes_{i=1}^n \fun{\mph[t]{h}}{t_i}}
			\item define the corresponding column in \f{M} such that:\begin{align*}
				M_{:, j}=\fun{\mph[t]{h}}{\fun{\nry}{t_1,t_2,\dots,t_n}},
			\end{align*}
			where \f{j} is the index of the combination in the enumeration
		\end{enumerate}
		
		\item the operator \f{\nry'} applies the matrix \f{M} to the tensor product:\begin{align*}
			\funeq{\nry'}{\fun{\mph[t]{h}}{t_1},\fun{\mph[t]{h}}{t_2},\dots,\fun{\mph[t]{h}}{t_n}}{M\cdot \vv[t_1,t_2,\dots,t_n]{v}}.
		\end{align*}
	\end{enumerate}
\end{remark}

\begin{example}\label{exe:nop}

	Consider the majority operator \maj{n} for arbitrary \f{n} in the vector space model. The majority operator returns \f{1} if the majority of its \f{n} inputs are \f{1}, and \f{0} otherwise. Such a matrix is \f{M\in\ff[2\times 2^n]{R}} that represents \maj{n} follows as:\begin{align*}
		\fun{\maj{n}'}{\fun{\mph[t]{h}}{t_1},\fun{\mph[t]{h}}{t_2},\dots,\fun{\mph[t]{h}}{t_n}}=M\cdot\bp{\bigotimes_{i=1}^n \fun{\mph[t]{h}}{t_i}}  = \fun{\mph[t]{h}}{\fun{\maj{n}}{t_1,t_2,\dots,t_n}},
	\end{align*}
	
	The majority operator \f{\maj{n}:\mdl[t]{D}^{n}\rightarrow\mdl[t]{D}} in the extensional model is defined by:\begin{align*}
		\fun{\maj{n}}{t_1,t_2,\dots,t_n}= \begin{cases}1 & \text { if } \sum_{i=1}^n t_i>\frac{n}{2} \\ 0 & \text { otherwise, }\end{cases}
	\end{align*}
	
	where \f{t_i\in\mdl[t]{D}} as usual, and the sum \f{\sum_{i=1}^n t_i} counts the number of ones in the input.
	
	The tensor product of \f{n} truth value vectors results in a \f{2^n}-dimensional vector of the form: \f{\vv[t_1,t_2,\dots,t_n]{v}=\bigotimes_{i=1}^n \fun{\mph[t]{h}}{t_i} \in \ff[2^n]{R}}. The matrix \f{M\in\ff[2\times 2^n]{R}} itself is then constructed such that each column corresponds to one of the \f{2^n} possible combinations of input truth values, of which are the vector representations \fun{\mph[t]{h}}{\fun{\maj{n}}{t_1,t_2,\dots,t_n}}.
	
	We construct a rather compact representation for the matrix \f{M} following the recipe above. As stated, there are \f{2^n} possible combinations of \bp{t_1,t_2,\dots,t_n}, where each \f{t_i\in\bc{0,1}}; we index each such combination by an integer \f{j} ranging from \f{1} to \f{2^n}. For each combination, we compute \fun{\maj{n}}{t_1,t_2,\dots,t_n} by calculating the sum \f{\sum_{i=1}^n t_i}, and determine the output of \maj{n} by checking whether the sum is greater than \f{\frac{n}{2}}. For each combination \f{j}, we set the \f{j}-th column of \f{M} as:\begin{align*}
		M_{:, j}=\fun{\mph[t]{h}}{\fun{\maj{n}}{t_1, t_2, \dots, t_n}},
	\end{align*}
	such that \f{M_{1, j}=1} and \f{M_{2, j}=0} if the majority function outputs 1, and \f{M_{1, j}=0} and \f{M_{2, j}=1} if it outputs 0.
	
	\f{M} can be enormous. Due to its exponential size, we can express it compactly using indicator functions (which is essentially the trick with the basis vectors we used in Example~\ref{exe:tri}). Let \vv[j]{e} be the \f{j}-th standard basis vector in \ff[2^n]{R}. For each \f{j}, corresponding to input \bp{t_1^{(j)}, t_2^{(j)}, \dots, t_n^{(j)}}, we define:\begin{align*}
		m_j = \fun{\maj{n}}{t_1^{(j)}, t_2^{(j)}, \dots, t_n^{(j)}} \in\bc{0,1}.
	\end{align*}
	
	In this way, the matrix \f{M} has the following representation:\begin{align*}
		M=\sum_{j=1}^{2^n} \fun{\mph[t]{h}}{m_j}\cdot \vv[j]{e}^{\top}.
	\end{align*}
	
\end{example}

From Definitions~\ref{def:mon}, \ref{def:dya}, \ref{def:tri}, \ref{def:nop}, as well as Examples~\ref{exe:neg}, \ref{exe:dya}, \ref{exe:dya2}, \ref{exe:tri}, \ref{exe:nop}, we see how logical operators in the extensional model relate to operations in the vector space model. Caution, however, should be exercised: in general, \f{n}-ary operators can explode exponentially from large \f{n}. For small \f{n}, constructing the matrix form of the \f{n}-ary majority operator \f{M} explicitly, for example, is feasible:

\begin{itemize}
	\item for \maj{n=3}, we have for any input truth values \f{t_1,t_2,t_3\in\mdl[t]{D}}:\begin{align*}
		\fun{\maj{3}'}{\fun{\mph[t]{h}}{t_1},\fun{\mph[t]{h}}{t_2},\fun{\mph[t]{h}}{t_3}}=M \cdot\bp{\bigotimes_{i=1}^3 \fun{\mph[t]{h}}{t_i}}=\fun{\mph[t]{h}}{\fun{\maj{3}}{t_1,t_2,t_3}} .
	\end{align*}

	and the matrix form is given by:\begin{align*}
		M=\left[\begin{array}{llllllll}
			1 & 1 & 1 & 0 & 1 & 0 & 0 & 0 \\
			0 & 0 & 0 & 1 & 0 & 1 & 1 & 1
		\end{array}\right]
	\end{align*}
	\item for \maj{n=4}, we have for any input truth values \f{t_1,t_2,t_3,t_4\in\mdl[t]{D}}:\begin{align*}
		\fun{\maj{4}'}{\fun{\mph[t]{h}}{t_1},\fun{\mph[t]{h}}{t_2},\fun{\mph[t]{h}}{t_3},\fun{\mph[t]{h}}{t_4}}=M \cdot\bp{\bigotimes_{i=1}^4 \fun{\mph[t]{h}}{t_i}}=\fun{\mph[t]{h}}{\fun{\maj{4}}{t_1,t_2,t_3,t_4}}.
	\end{align*}
	
	and the matrix form is given by:\begin{align*}
		M=\left[\begin{array}{llllllllllllllll}
			1 & 1 & 1 & 0 & 1 & 0 & 0 & 0 & 1 & 0 & 0 & 0 & 0 & 0 & 0 & 0 \\
			0 & 0 & 0 & 1 & 0 & 1 & 1 & 1 & 0 & 1 & 1 & 1 & 1 & 1 & 1 & 1
		\end{array}\right]
	\end{align*}
\end{itemize}

At some point, then, in the implementation, a decision needs to be made to mitigate the exponential explosion, even indulging in tricks otherwise outside of the scope of relating formal and vector frameworks; what that point is and what alternative form for the operator is are both design decisions. For example, the majority function may be expressed alternatively as a Boolean polynomial for Boolean variables \f{t_i}, which expands the function into a sum over combinations where the majority criterion is met \cite{odonnell2021analysisbooleanfunctions}; we still run into the issue of an exponential number of terms, however, though a polynomial expression may be recovered:\begin{align*}
	\fun{\maj{n}}{t_1, t_2, \dots, t_n}=\sum_{k=\ceil{\frac{n}{2}+1}}^n \sum_{\substack{I \subseteq\bc{1, \dots, n} \\\abs{I}=k}}\left(\prod_{i \in I} t_i \prod_{j \notin I}\left(1-t_j\right)\right),
\end{align*}

which we interpret in the following way\footnote{Specifically, the outer sum over \f{k} from \ceil{\frac{n}{2}+1} to \f{n} represents all cases where the number of ones exceeds half of \f{n}; the inner sum is over all subsets\f{I} of size \f{k} from the set \bc{1,2,\dots,n}; the product over \f{i \in I} checks for all variables in subset \f{I} to be 1; the product over \f{j \notin I} checks for all variables not in subset \f{I} to be 0. }: each term in the sum corresponds to a specific combination where exactly \f{k} variables are 1 and the rest are 0, and by summing over all such combinations where \f{k > \frac{n}{2}}, we capture all input configurations where the majority of variables are 1. 

For example, for \f{n=3}, the majority function outputs 1 when at least 2 of the 3 inputs are 1. \f{\ceil{\frac{3}{2}+1}=2.5}, so \f{k} ranges from 2 to 3. For \f{k=2}, we have subsets of size 2 given by: \bc{1,2}, \bc{1,3}, \bc{2,3}, and terms: \f{t_1 t_2\bp{1-t_3}}, \f{t_1 t_3\bp{1-t_2}}, \f{t_2 t_3\bp{1-t_1}}. Then for \f{k=3}, the only subset of size 3 is simply \bc{1,2,3}, and one term \f{t_1 t_2 t_3}. The final polynomial representation is, then:\begin{align*}
	\fun{\maj{3}}{t_1, t_2, t_3}=t_1 t_2\bp{1-t_3}+t_1 t_3\bp{1-t_2}+t_2 t_3\bp{1-t_1}+t_1 t_2 t_3 .
\end{align*}


Much of this seems to be reinventing the wheel as a square; the point of this discussion, however, is to illustrate how these operators may be represented in both \mdl[ext]{M} and \mdl[{\mdl{S}}]{M}, as well as to describe the process of how these operators may be related back and forth between these models. Resolving exponential explosion is an evergreen problem, not least of all in data representation and analysis and language processing, from which we are not exempt.

\subsection{Proof for \mph[f]{h}}

Let \f{f:\mdl[A]{D}\rightarrow \mdl[B]{D}} be an arbitrary semantic function in the extensional model \mdl[ext]{M}, where \mdl[A]{D} and \mdl[B]{D} are domains in \mdl[ext]{M}. Let \mdl[{\mdl[A]{D}}]{S} and \mdl[{\mdl[B]{D}}]{S} be corresponding spaces in \mdl[{\mdl{S}}]{M}. 

\begin{lemma}\label{lem:bijA}
	For any semantic domain \mdl[A]{D} in the extensional model \mdl[ext]{M}, the mapping \f{\mph[A]{h}:\mdl[A]{D}\rightarrow\img{\mph[A]{h}}\subseteq\mdl[{\mdl[A]{D}}]{S}} is an injective mapping between \mdl[A]{D} and \mdl[{\mdl[A]{D}}]{S} in the vector space model \mdl[{\mdl{S}}]{M}.
\end{lemma}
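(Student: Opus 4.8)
The plan is to mirror the constructions already carried out for the entity domain in Lemma~\ref{lem:bij} and the truth-value domain in Lemma~\ref{lem:bijt}, of which the present statement is the natural generalization to an arbitrary semantic domain \mdl[A]{D}. First I would index the domain, writing \f{\mdl[A]{D} = \bcdef{a_i}{i\in I_A}} for some index set \f{I_A}, and then take \mdl[{\mdl[A]{D}}]{S} to be a vector space over \ff{R} equipped with a basis \f{\bcdef{\vv[i]{b}}{i\in I_A}} indexed by the same set. The mapping would then be defined exactly as before by \funeq{\mph[A]{h}}{a_i}{\vv[i]{b}}, sending each element of the domain to its corresponding basis vector; since the stated codomain is \img{\mph[A]{h}}, surjectivity onto the image is automatic and the entire content of the lemma reduces to injectivity.

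Injectivity would then follow by the identical argument used for \mph[e]{h}. Suppose \f{a_i,a_j\in\mdl[A]{D}} with \f{a_i\neq a_j}, so that \f{i\neq j}. Then \funeq{\mph[A]{h}}{a_i}{\vv[i]{b}} and \funeq{\mph[A]{h}}{a_j}{\vv[j]{b}}, and since distinct basis vectors are linearly independent and hence distinct, we have \f{\vv[i]{b}\neq\vv[j]{b}}. Therefore \f{\fun{\mph[A]{h}}{a_i}\neq\fun{\mph[A]{h}}{a_j}}, which establishes that \mph[A]{h} is injective.

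The hard part is not this injectivity argument, which is routine once the basis indexing is fixed, but rather justifying that a suitable codomain \mdl[{\mdl[A]{D}}]{S} always exists, especially when \mdl[A]{D} is a higher-type domain such as \mdl[\bt{e,t}]{D} whose elements are themselves functions rather than atomic objects. Here I would appeal to the free vector space construction: for any set whatsoever there is a real vector space admitting a basis in bijection with that set, so an index set \f{I_A} and a matching basis can always be chosen irrespective of the internal structure of the elements of \mdl[A]{D}. This is also where the cardinality caveat discussed after Theorem~\ref{thm:homo} re-enters—if \mdl[A]{D} is infinite we must permit \mdl[{\mdl[A]{D}}]{S} to be infinite-dimensional (or else restrict to finite domains) for such a basis to exist—but with that proviso the construction proceeds uniformly for every \f{\tau\in T}. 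Since the lemma asserts only injectivity and not preservation of function application or any further structure, no compatibility conditions need be checked at this stage; those are deferred to the subsequent lemmas on \mph[f]{h}.
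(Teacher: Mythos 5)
Your proposal is correct and follows essentially the same route as the paper's own proof: both send each element of \mdl[A]{D} to a distinct basis vector of a space whose basis is indexed by (in bijection with) the domain, and both derive injectivity from the distinctness of basis vectors, with surjectivity onto \img{\mph[A]{h}} noted as automatic. Your explicit appeal to the free vector space construction and the cardinality proviso makes rigorous a point the paper only gestures at (\qte{a vector space of dimension equal to the cardinality of \mdl[A]{D}}), but it is a refinement of, not a departure from, the paper's argument.
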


\begin{proof}\label{pr:bijA}
	Let \mdl[A]{D} be a semantic domain in the model \mdl[ext]{M}, and \mdl[{\mdl[A]{D}}]{S} a subset of the space \ff{R} of sufficient dimension; \mph[A]{h} is such that all \f{a\in\mdl[A]{D}} is assigned a unique vector \f{\fun{\mph[A]{h}}{a}\in\img{\mph[A]{h}}\subseteq\mdl[{\mdl[A]{D}}]{S}}. Suppose now that \f{\mdl[A]{D} = \bc{a_1,a_2,\dots,a_n}} with \f{n} elements, and we let \f{\mdl[{\mdl[A]{D}}]{S}=\ff[n]{R}}. We define \funeq{\mph[A]{h}}{a_i}{\vv[i]{v}}, where \vv[i]{v} is the \f{i}-th standard basis vector in \ff[n]{R}. Let \mdl[{\mdl[A]{D}}]{S} be a vector space of dimension equal to the cardinality of \mdl[A]{D}. We choose a basis representation of \bcdef{\vv[a]{v}}{a\in\mdl[A]{D} }, from which we define \funeq{\mph[A]{h}}{a}{\vv[a]{v}}.
	
	Assume now that \f{a_1,a_2 \in \mdl[A]{D}} such that \funeq{\mph[A]{h}}{a_1}{\fun{\mph[A]{h}}{a_2}}. By definition of \mph[A]{h}, we have that \funeq{\mph[A]{h}}{a_1}{\vv[a_1]{v}} and \funeq{\mph[A]{h}}{a_2}{\vv[a_2]{v}}, from which it follows that \f{\vv[a_1]{v} = \vv[a_2]{v}}. Since basis vectors are distinct, it follows that \f{a_1 = a_2}. Therefore, \mph[A]{h} is injective because it maps distinct elements of \mdl[A]{D} to distinct vectors in \mdl[{\mdl[A]{D}}]{S}. Furthermore, we have that the image \f{\img{\mph[A]{h}} = \bcdef{\fun{\mph[A]{h}}{a}}{a\in\mdl[A]{D}}\subseteq\mdl[{\mdl[A]{D}}]{S}}. For any vector \f{\vv{v}\in\img{\mph[A]{h}}}, there exists an \f{a\in\mdl[A]{D}} such that \f{\vv{v}=\fun{\mph[A]{h}}{a}}. Therefore, \mph[A]{h} is surjective onto its image \img{\mph[A]{h}}.
	
	\mph[A]{h}, then, is an injective mapping from \mdl[A]{D} into the space \mdl[{\mdl[A]{D}}]{S}, and the image \img{\mph[A]{h}} is a subset of \mdl[{\mdl[A]{D}}]{S}.

\end{proof}

\begin{lemma}\label{lem:func}
	For each semantic function \f{f: \mdl[A]{D}\rightarrow\mdl[B]{D}} in the extensional model \mdl[ext]{M}, there exists a function \f{\mph[f]{h}:\img{\mph[A]{h}}\rightarrow\img{\mph[B]{h}}} in the vector space model \mdl[{\mdl{S}}]{M} such that for all \f{a\in\mdl[A]{D}}:\begin{align*}
		\funeq{\mph[f]{h}}{\fun{\mph[A]{h}}{a}}{\fun{\mph[B]{h}}{\fun{f}{a}}},
	\end{align*}
	where \f{\mph[A]{h}:\mdl[A]{D}\rightarrow\img{\mph[A]{h}}\subseteq\mdl[{\mdl[A]{D}}]{S}} and \f{\mph[B]{h}:\mdl[B]{D}\rightarrow\img{\mph[B]{h}}\subseteq\mdl[{\mdl[B]{D}}]{S}} are injective mappings. 
\end{lemma}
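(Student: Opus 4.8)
The plan is to mirror the construction used for \mph[t]{h} in Lemma~\ref{lem:fun}: I would exploit the injectivity of \mph[A]{h} to obtain a left-inverse and then define \mph[f]{h} as the conjugation of \f{f} by the two injections \mph[A]{h} and \mph[B]{h}. First I would invoke Lemma~\ref{lem:bijA}, which establishes that \mph[A]{h} is injective onto its image \img{\mph[A]{h}}; injectivity guarantees a well-defined left-inverse \f{\mph[A]{h}^{-1}:\img{\mph[A]{h}}\rightarrow\mdl[A]{D}} satisfying \funeq{\mph[A]{h}^{-1}}{\fun{\mph[A]{h}}{a}}{a} for every \f{a\in\mdl[A]{D}}.

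Next I would define \f{\mph[f]{h}:\img{\mph[A]{h}}\rightarrow\img{\mph[B]{h}}} by
\begin{align*}
	\funeq{\mph[f]{h}}{\vv{v}}{\fun{\mph[B]{h}}{\fun{f}{\fun{\mph[A]{h}^{-1}}{\vv{v}}}}}
\end{align*}
for all \f{\vv{v}\in\img{\mph[A]{h}}}, that is, \f{\mph[f]{h}=\mph[B]{h}\circ f\circ\mph[A]{h}^{-1}}. I would then check that this map is well-defined and lands in \img{\mph[B]{h}}: for any \f{\vv{v}\in\img{\mph[A]{h}}} there is a unique \f{a\in\mdl[A]{D}} with \f{\vv{v}=\fun{\mph[A]{h}}{a}}, so \fun{\mph[A]{h}^{-1}}{\vv{v}} is defined, \f{\fun{f}{\fun{\mph[A]{h}^{-1}}{\vv{v}}}\in\mdl[B]{D}}, and hence its image under \mph[B]{h} lies in \img{\mph[B]{h}} by the very definition of that image.

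Finally I would verify the stated commutativity identity. For arbitrary \f{a\in\mdl[A]{D}}, substituting \f{\vv{v}=\fun{\mph[A]{h}}{a}} and applying the left-inverse identity yields
\begin{align*}
	\fun{\mph[f]{h}}{\fun{\mph[A]{h}}{a}} = \fun{\mph[B]{h}}{\fun{f}{\fun{\mph[A]{h}^{-1}}{\fun{\mph[A]{h}}{a}}}} = \fun{\mph[B]{h}}{\fun{f}{a}},
\end{align*}
which is exactly the claimed equation and shows that the relevant square commutes; this is the same diagram accompanying Theorem~\ref{thm:homo}.

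I do not expect a serious obstacle, since the construction is routine once injectivity is secured. The single point demanding care is the subtlety flagged in the remark following Lemma~\ref{lem:fun}: \img{\mph[A]{h}} is in general only a subset of the vector space \mdl[{\mdl[A]{D}}]{S} and need not be closed under vector addition or scalar multiplication, so \mph[f]{h} is defined only on \img{\mph[A]{h}}. I would therefore avoid asserting any linearity of \mph[f]{h} or extending it to all of \mdl[{\mdl[A]{D}}]{S}; restricting the domain to \img{\mph[A]{h}} is precisely what keeps the left-inverse total and the argument clean.
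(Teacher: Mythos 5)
Your proposal is correct and follows essentially the same route as the paper's own proof: both invoke the injectivity of \mph[A]{h} to obtain a left-inverse, define \f{\mph[f]{h}=\mph[B]{h}\circ f\circ \mph[A]{h}^{-1}} on \img{\mph[A]{h}}, check well-definedness, and verify the commutativity identity by the left-inverse cancellation \funeq{\mph[A]{h}^{-1}}{\fun{\mph[A]{h}}{a}}{a}. Your closing caution about not extending \mph[f]{h} linearly beyond \img{\mph[A]{h}} matches the remark in the paper and is well placed.
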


The diagram commutes:

\begin{figure}[H]
	\[\begin{tikzcd}
		\mdl[A]{D} && \mdl[B]{D} \\
		\\
		\img{\mph[A]{h}} && \img{\mph[B]{h}}
		\arrow["{\mph{f}}", from=1-1, to=1-3]
		\arrow["{\mph[A]{h}}"', from=1-1, to=3-1]
		\arrow["{\mph[B]{h}}", from=1-3, to=3-3]
		\arrow["{\mph[f]{h}}"', from=3-1, to=3-3]
	\end{tikzcd}\]
\end{figure}

\begin{proof}\label{pr:func}
	Both \mph[A]{h} and \mph[B]{h} are injective mappings from their respective domains into vector spaces \mdl[{\mdl[A]{D}}]{S} and \mdl[{\mdl[B] {D}}]{S}, respectively. Since \mph[A]{h} is injective, it has a left-inverse \f{\mph[A]{h}^{-1}:\img{\mph[A]{h}}\rightarrow\mdl[A]{D}}. For each \f{\vv{v}\in\img{\mph[A]{h}}}, there exists a unique \f{a\in\mdl[a]{D}} such that:\begin{align*}
		\funeq{\mph[A]{h}}{a}{\vv{v}}.
	\end{align*}
	
	We define \mph[f]{h} as, for all \f{\vv{v\in\img{\mph[A]{h}}}}:\begin{align*}
		\funeq{\mph[f]{h}}{\vv{v}}{\fun{\mph[B]{h}}{\fun{f}{\fun{\mph[A]{h}^{-1}}{\vv{v}}}}}.
	\end{align*}
	
	\mph[f]{h} is well-defined on \img{\mph[A]{h}}, since the injectivity of \mph[A]{h} guarentees that \fun{\mph[A]{h}^{-1}}{\vv{v}} is uniquely defined for each \f{\vv{v}\in\img{\mph[A]{h}}}, and the function \mph[f]{h} assigns a unique value in \img{\mph[B]{h}} to each \f{\vv{v}\in\img{\mph[A]{h}}}.
	
	Let \f{a\in\mdl[A]{D}}. We compute \fun{\mph[f]{h}}{\fun{\mph[A]{h}}{a}}:\begin{align*}
		\fun{\mph[f]{h}}{\fun{\mph[A]{h}}{a}} & = \fun{\mph[B]{h}}{\fun{f}{\fun{\mph[A]{h}^{-1}}{\fun{\mph[A]{h}}{a}}}}\\
		& = \fun{\mph[B]{h}}{\fun{f}{a}},
	\end{align*}
	
	since \funeq{\mph[a]{h}^{-1}}{\fun{\mph[A]{h}}{a}}{a} by the injectivity of \mph[A]{h}. Commutativity of the diagram follows:
	
	\begin{itemize}
		\item path along \f{f} and \mph[B]{h}: \f{a \xrightarrow{f} \fun{f}{a} \xrightarrow{h_B} \fun{\mph[B]{h}}{\fun{f}{a}}}
		
		\item path along \mph[A]{h} and \mph[f]{h}: \f{a \xrightarrow{\mph[A]{h}} \fun{\mph[A]{h}}{a} \xrightarrow{\mph[f]{h}} \fun{\mph[f]{h}}{\fun{\mph[A]{h}}{a}}}
	\end{itemize}
	
	Since \f{\fun{\mph[f]{h}}{\fun{\mph[A]{h}}{a}} = \fun{\mph[B]{h}}{\fun{f}{a}}}, both paths lead to the same element in \img{\mph[B]{h}}.

\end{proof}

\begin{corollary}\label{cor:compo}
	For functions \f{f:\mdl[A]{D}\rightarrow\mdl[B]{D}} and \f{g:\mdl[B]{D}\rightarrow\mdl[C]{D}} in the extensional model \mdl[ext]{M} whose composition is given by \f{g \circ f: \mdl[A]{D} \rightarrow \mdl[C]{D}}, where \funeq{\bp{g \circ f}}{a}{\fun{g}{\fun{f}{a}}}, there exists a function \f{\mph[{g\circ f}]{h}:\img{\mph[A]{h}}\rightarrow\img{\mph[C]{h}}} in the vector space model \mdl[\mdl{S}]{M} such that, for all \f{a\in\mdl[A]{D}}:\begin{align*}
		\fun{\mph[{g\circ f}]{h}}{\fun{\mph[A]{h}}{a}} & = \fun{\mph[C]{h}}{\fun{\bp{g\circ f}}{a}}\\
		& = \fun{\mph[g]{h}}{\fun{\mph[f]{h}}{\fun{\mph[A]{h}}{a}}},
	\end{align*}

	where \f{\mph[A]{h}:\mdl[A]{D}\rightarrow\img{\mph[A]{h}}\subseteq\mdl[{\mdl[A]{D}}]{S}}, \f{\mph[B]{h}:\mdl[B]{D}\rightarrow\img{\mph[B]{h}}\subseteq\mdl[{\mdl[B]{D}}]{S}}, and \f{\mph[C]{h}:\mdl[C]{D}\rightarrow\img{\mph[C]{h}}\subseteq\mdl[{\mdl[C]{D}}]{S}}, are injective mappings from their respective domains to subsets of vector spaces in \mdl[\mdl{S}]{M}, and \f{\mph[f]{h}:\img{\mph[A]{h}}\rightarrow\img{\mph[B]{h}}} is defined by \funeq{\mph[f]{h}}{\fun{\mph[A]{h}}{a}}{\fun{\mph[B]{h}}{\fun{f}{a}}} for all \f{a\in\mdl[A]{D}}, \f{\mph[g]{h}:\img{\mph[B]{h}}\rightarrow\img{\mph[C]{h}}} is defined by \funeq{\mph[g]{h}}{\fun{\mph[B]{h}}{b}}{\fun{\mph[C]{h}}{\fun{g}{b}}} for all \f{b\in\mdl[B]{D}}. 

\end{corollary}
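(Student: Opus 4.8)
The plan is to obtain this as an immediate consequence of Lemma~\ref{lem:func}, applied separately to the three semantic functions $f$, $g$, and $g\circ f$, and finished off by a short diagram chase. Since $g\circ f:\mdl[A]{D}\rightarrow\mdl[C]{D}$ is itself a semantic function in \mdl[ext]{M}, Lemma~\ref{lem:func} applies to it verbatim and supplies a map $\mph[{g\circ f}]{h}:\img{\mph[A]{h}}\rightarrow\img{\mph[C]{h}}$ with $\funeq{\mph[{g\circ f}]{h}}{\fun{\mph[A]{h}}{a}}{\fun{\mph[C]{h}}{\fun{\bp{g\circ f}}{a}}}$ for every $a\in\mdl[A]{D}$. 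This delivers the first equality of the corollary with no extra work, and simultaneously witnesses the existence claim.

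For the second equality I would first invoke Lemma~\ref{lem:func} for $f$ and for $g$ individually, giving $\mph[f]{h}:\img{\mph[A]{h}}\rightarrow\img{\mph[B]{h}}$ and $\mph[g]{h}:\img{\mph[B]{h}}\rightarrow\img{\mph[C]{h}}$ together with their defining identities $\funeq{\mph[f]{h}}{\fun{\mph[A]{h}}{a}}{\fun{\mph[B]{h}}{\fun{f}{a}}}$ and $\funeq{\mph[g]{h}}{\fun{\mph[B]{h}}{b}}{\fun{\mph[C]{h}}{\fun{g}{b}}}$. Because the codomain of $\mph[f]{h}$ is exactly the domain $\img{\mph[B]{h}}$ of $\mph[g]{h}$, the composite $\mph[g]{h}\circ\mph[f]{h}:\img{\mph[A]{h}}\rightarrow\img{\mph[C]{h}}$ is well-defined. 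I would then chain the two defining identities: starting from $\fun{\mph[g]{h}}{\fun{\mph[f]{h}}{\fun{\mph[A]{h}}{a}}}$, rewrite the inner application as $\fun{\mph[B]{h}}{\fun{f}{a}}$ using the identity for $\mph[f]{h}$, then apply the identity for $\mph[g]{h}$ at the point $b=\fun{f}{a}\in\mdl[B]{D}$ to reach $\fun{\mph[C]{h}}{\fun{g}{\fun{f}{a}}}=\fun{\mph[C]{h}}{\fun{\bp{g\circ f}}{a}}$. Comparing this with the first equality shows that both $\mph[{g\circ f}]{h}$ and $\mph[g]{h}\circ\mph[f]{h}$ send $\fun{\mph[A]{h}}{a}$ to the same value for every $a$, so I may take $\mph[{g\circ f}]{h}=\mph[g]{h}\circ\mph[f]{h}$.

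There is no genuine obstacle here; the work is essentially a one-step diagram chase. The only points requiring care are bookkeeping: that every vector in $\img{\mph[A]{h}}$ really is of the form $\fun{\mph[A]{h}}{a}$ --- which is the surjectivity onto the image recorded in the proof of Lemma~\ref{lem:bijA}, and is what upgrades the pointwise agreement to an identity of functions --- and that $\fun{f}{a}$ genuinely lands in $\mdl[B]{D}$, so that the defining identity for $\mph[g]{h}$ is legitimately invoked with $b=\fun{f}{a}$. No cardinality or dimension subtleties intrude, since composition only ever refers to the images $\img{\mph[A]{h}}$, $\img{\mph[B]{h}}$, $\img{\mph[C]{h}}$, on each of which the relevant map is a bijection onto its image.
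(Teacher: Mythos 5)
Your proposal is correct and follows essentially the same route as the paper's own proof: both obtain $\mph[{g\circ f}]{h}$ by applying the construction of Lemma~\ref{lem:func} to the composite $g\circ f$, then verify the second equality by a pointwise diagram chase through the defining identities of $\mph[f]{h}$ and $\mph[g]{h}$ at $b=\fun{f}{a}$, concluding $\mph[{g\circ f}]{h}=\mph[g]{h}\circ\mph[f]{h}$. Your explicit bookkeeping remarks (surjectivity of $\mph[A]{h}$ onto its image to upgrade pointwise agreement to equality of functions, and the legitimacy of instantiating the identity for $\mph[g]{h}$ at $b=\fun{f}{a}$) are left implicit in the paper but add nothing beyond what its computation already uses.
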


The diagram commutes:
\begin{figure}[H]
	\[\begin{tikzcd}[column sep=large, row sep=large] \mdl[A]{D} \arrow{r}{f} \arrow[swap]{d}{h_A} & \mdl[B]{D} \arrow{r}{g} \arrow{d}{h_B} & \mdl[C]{D} \arrow{d}{h_C} \\ 
	\img{\mph[A]{h}} \arrow[swap]{r}{h_f} & \img{\mph[B]{h}} \arrow[swap]{r}{h_g} & \img{\mph[C]{h}} \end{tikzcd}\]
	\end{figure}
	
	from which it follows that \f{\mph[{g\circ f}]{h} = \mph[g]{h}\circ \mph[f]{h}}.

\begin{proof}\label{pr:compo}
	The proof follows from direct computation, chasing the diagram in parts. First, for all \f{a\in\mdl[A]{D}}, we compute \fun{\mph[{g \circ f}]{h}}{\fun{\mph[A]{h}}{a}} by definition of \mph[{g\circ f}]{h} as:\begin{align*}
		\begin{aligned}
			\fun{\mph[{g \circ f}]{h}}{\fun{\mph[A]{h}}{a}} & = \fun{\mph[C]{h}}{\fun{\bp{g \circ f}}{a}} \\
			& = \fun{\mph[C]{h}}{\fun{g}{\fun{f}{a}}}.
		\end{aligned}
	\end{align*}
	
	Computing \fun{\mph[g]{h}}{\fun{\mph[f]{h}}{\fun{\mph[A]{h}}{a}}} proceeds in two parts. For \fun{\mph[f]{h}}{\fun{\mph[A]{h}}{a}}, we have by definition of \mph[f]{h}:\begin{align*}
		\funeq{\mph[f]{h}}{\fun{\mph[A]{h}}{a}}{\fun{\mph[B]{h}}{\fun{f}{a}}}.
	\end{align*} 
	
	Likewise, we compute \fun{\mph[g]{h}}{\fun{\mph[f]{h}}{\fun{\mph[A]{h}}{a}}}:\begin{align*}
		\begin{aligned}
			\fun{\mph[g]{h}}{\fun{\mph[f]{h}}{\fun{\mph[A]{h}}{a}}} & = \fun{\mph[g]{h}}{\fun{\mph[B]{h}}{\fun{f}{a}}} \\
			& =\fun{\mph[C]{h}}{\fun{g}{\fun{f}{a}}},
		\end{aligned}
	\end{align*}

	where the last equivalence follows by definition of \mph[g]{h}.
	
	From the above computations, we have:\begin{align*}
		\fun{\mph[{g \circ f}]{h}}{\fun{\mph[A]{h}}{a}} = \fun{\mph[C]{h}}{\fun{g}{\fun{f}{a}}} =  \fun{\mph[g]{h}}{\fun{\mph[f]{h}}{\fun{\mph[A]{h}}{a}}}.
	\end{align*}

	Therefore, for all \f{a\in\mdl[A]{D}},\begin{align*}
		\funeq{\mph[{g \circ f}]{h}}{\fun{\mph[A]{h}}{a}}{\fun{\mph[g]{h}}{\fun{\mph[f]{h}}{\fun{\mph[A]{h}}{a}}}},
	\end{align*}  and the diagram commutes.
\end{proof}

A generalization of this composition is given in Theorem~\ref{thm:compo2}. This demonstrates that the composition of functions in the extensional model corresponds to the composition of their mappings in the vector space model, and the diagram commutes for the entire composition.

\begin{theorem}\label{thm:compo2}
	
Let \f{n \geq 1} be an integer, and let \f{f_1, f_2, \dots, f_n} be a sequence of semantic functions in the extensional model \mdl[ext]{M}, where each \f{f_i} is a function \f{f_i: \mdl[{A_i}]{D} \rightarrow \mdl[{A_{i+1}}]{D}} for \f{i=1,2, \dots, n}. Then, for each \f{f_i}, there exists a function \f{\mph[f_i]{h}:\img{\mph[A_{i}]{h}}\rightarrow\img{\mph[A_{i+1}]{h}}} in the vector space model \mdl[{\mdl{S}}]{M} such that for all \f{a \in \mdl[{A_1}]{D}}:\begin{align*}
		\funeq{\bp{\mph[f_{n}]{h}\circ\mph[f_{n-1}]{h}\circ\dots\circ\mph[f_{1}]{h}}}{\fun{\mph[A_{1}]{h}}{a}}{\fun{\mph[A_{n+1}]{h}}{\fun{\bp{f_n \circ f_{n-1}\circ\dots\circ f_1}}{a}}}.
	\end{align*}

\end{theorem}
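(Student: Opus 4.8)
The plan is to prove the statement by induction on $n$, with Lemma~\ref{lem:func} serving as both the base case and the single-step engine, and Corollary~\ref{cor:compo} exhibiting the $n=2$ instance of the pattern I intend to iterate. For the base case $n=1$, the claim reduces directly to Lemma~\ref{lem:func}: for every $a\in\mdl[{A_1}]{D}$, we have $\funeq{\mph[f_1]{h}}{\fun{\mph[{A_1}]{h}}{a}}{\fun{\mph[{A_2}]{h}}{\fun{f_1}{a}}}$, which is exactly the asserted identity when the composition consists of a single factor.

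For the inductive step, I would assume the statement for every composable sequence of length $n-1$ and apply it to the initial segment $f_1,\dots,f_{n-1}$. This yields, for all $a\in\mdl[{A_1}]{D}$:
\begin{align*}
	\fun{\bp{\mph[f_{n-1}]{h}\circ\dots\circ\mph[f_1]{h}}}{\fun{\mph[{A_1}]{h}}{a}} = \fun{\mph[{A_n}]{h}}{\fun{\bp{f_{n-1}\circ\dots\circ f_1}}{a}}.
\end{align*}
The crucial observation is that the intermediate value $b = \fun{\bp{f_{n-1}\circ\dots\circ f_1}}{a}$ is a genuine element of the domain $\mdl[{A_n}]{D}$, so its image $\fun{\mph[{A_n}]{h}}{b}$ lies in $\img{\mph[{A_n}]{h}}$, precisely the domain on which $\mph[f_n]{h}$ is defined. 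I would then apply $\mph[f_n]{h}$ to the right-hand side and invoke Lemma~\ref{lem:func} for the final function $f_n:\mdl[{A_n}]{D}\rightarrow\mdl[{A_{n+1}}]{D}$:
\begin{align*}
	\fun{\mph[f_n]{h}}{\fun{\mph[{A_n}]{h}}{b}} = \fun{\mph[{A_{n+1}}]{h}}{\fun{f_n}{b}} = \fun{\mph[{A_{n+1}}]{h}}{\fun{\bp{f_n\circ\dots\circ f_1}}{a}},
\end{align*}
where the second equality merely unfolds $\fun{f_n}{b}$ using associativity of function composition. Chaining these equalities with the induction hypothesis closes the induction.

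The main obstacle here is purely bookkeeping rather than conceptual: I must track the index alignment so that, at each stage, the codomain $\mdl[{A_{i+1}}]{D}$ produced by one factor is exactly the domain of the next, ensuring that every application of Lemma~\ref{lem:func} is legitimate and that no iterate ever leaves the image subspace $\img{\mph[{A_i}]{h}}$ on which the lifted functions act. This compatibility is guaranteed verbatim by the composability hypothesis $f_i:\mdl[{A_i}]{D}\rightarrow\mdl[{A_{i+1}}]{D}$. There is no deeper difficulty: the entire substance of the theorem is contained in Lemma~\ref{lem:func}, and this result is simply its iterated form, confirming that the correspondence between the extensional and vector space models respects arbitrary finite function composition.
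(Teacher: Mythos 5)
Your proposal is correct and follows essentially the same route as the paper's own proof: induction on the length of the composition, with Lemma~\ref{lem:func} providing both the base case $n=1$ and the single-step extension, and the inductive step peeling off the last function by substituting the intermediate value $b = \fun{\bp{f_{n-1}\circ\dots\circ f_1}}{a} \in \mdl[{A_n}]{D}$ into the defining identity of the lifted map. The paper writes the step as passing from $k$ to $k+1$ rather than from $n-1$ to $n$, but the chain of equalities is identical.
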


The diagram commutes:\begin{figure}[H]
	\[\begin{tikzcd}[column sep=large, row sep=large] D_{A_1} \arrow{r}{f_1} \arrow[swap]{d}{h_{A_1}} & D_{A_2} \arrow{r}{f_2} \arrow{d}{h_{A_2}} & \cdots \arrow{r}{f_n} & D_{A_{n+1}} \arrow{d}{h_{A_{n+1}}} \\ \img{\mph[A_{1}]{h}} \arrow[swap]{r}{h_{f_1}} & \img{\mph[A_{2}]{h}} \arrow[swap]{r}{h_{f_2}} & \cdots \arrow[swap]{r}{h_{f_n}} & \img{\mph[A_{n+1}]{h}} \end{tikzcd}\]
\end{figure}

\begin{proof}\label{pr:compo2}
	The proof follows by induction. Consider the base case, where for \f{n=1}, we recover what was shown in Lemma~\ref{lem:func}, in which we have a function \f{f_1:\mdl[A_1]{D}\rightarrow\mdl[A_2]{D}}. From Lemma~\ref{lem:func}, we know that there exists a function \f{\mph[f_1]{h}:\img{\mph[A_1]{h}}\rightarrow\img{\mph[A_2]{h}}} such that, for all \f{a\in\mdl[A_1]{D}}:\begin{align*}
		\funeq{\mph[f_{1}]{h}}{\fun{\mph[A_{1}]{h}}{a}}{\fun{\mph[A_{2}]{h}}{\fun{f_{1}}{a}}}.
	\end{align*}
	
	This establishes the base case.
	
	Assume now that Theorem~\ref{thm:compo2} holds for some \f{n = k\geq 1}; that is, for functions \f{f_1,f_2,\dots,f_k}, we have, for all \f{a\in\mdl[A_1]{D}}:\begin{align*}
		\funeq{\bp{\mph[f_{k}]{h}\circ\mph[f_{k-1}]{h}\circ\dots\circ\mph[f_{1}]{h}}}{\fun{\mph[A_{1}]{h}}{a}}{\fun{\mph[A_{k+1}]{h}}{\fun{\bp{f_k \circ f_{k-1}\circ\dots\circ f_1}}{a}}}.
	\end{align*}
	
	Now consider an additional function \f{f_{k+1}:\mdl[A_{k+1}]{D}\rightarrow\mdl[A_{k+2}]{D}}. By Lemma~\ref{lem:func}, there exists a function \f{\mph[f_{k+1}]{h}:\img{\mph[A_{k+1}]{h}}\rightarrow\img{\mph[A_{k+2}]{h}}} such that, for all \f{b\in\mdl[A_{k+1}]{D}}:\begin{align*}
		\funeq{\mph[f_{k+1}]{h}}{\fun{\mph[A_{k+1}]{h}}{b}}{\fun{\mph[A_{k+2}]{h}}{\fun{f_{k+1}}{b}}}.
	\end{align*}
	
	Now, consider the composition up to \f{n=k+1}:\begin{align*}
		\bp{\mph[f_{k+1}]{h}\circ\mph[f_{k}]{h}\circ\mph[f_{k-1}]{h}\circ\dots\circ\mph[f_{1}]{h}}:\img{\mph[A_1]{h}\rightarrow\img{\mph[A_{k+2}]{h}}}.
	\end{align*}
	
	We apply this composition to \fun{\mph[A_1]{h}}{a}:\begin{align*}
		\begin{aligned}
			\fun{\bp{\mph[f_{k+1}]{h}\circ\mph[f_{k}]{h}\circ\mph[f_{k-1}]{h}\circ\dots\mph[f_{1}]{h}}}{\fun{\mph[A_{1}]{h}}{a}} & =\fun{\mph[f_{k+1}]{h}}{\fun{\bp{\mph[f_k]{h}\circ\mph[f_{k-1}]{h}\circ\dots\circ\mph[f_1]{h}}}{\fun{\mph[A_{1}]{h}}{a}}} & \\
			& =\fun{\mph[f_{k+1}]{h}}{\fun{\mph[A_{k+1}]{h}}{\fun{\bp{f_k\circ f_{k-1}\circ\dots\circ f_1}}{a}}}  & \qquad\esc{(by induction hypothesis)}\\
			& =\fun{\mph[A_{k+2}]{h}}{\fun{f_{k+1}}{\fun{\bp{f_k \circ f_{k-1}\circ\dots\circ f_1}}{a}}} & \qquad\esc{(by definition of \f{\mph[f_{k+1}]{h}})}\\
			& =\fun{\mph[A_{k+2}]{h}}{\fun{\bp{f_{k+1}\circ f_k \circ f_{k-1}\circ\dots\circ f_1}}{a}}, &
		\end{aligned}
	\end{align*}
	
	Where the first equality is a direct application of the composition to \fun{\mph[A_1]{h}}{a}; the second equality follows from the inductive hypothesis; the third equality follows from substituting \f{b = \fun{\bp{f_k\circ f_{k-1}\circ\dots\circ f_1}}{a}} for the definition of \mph[f_{k+1}]{h}, where, for all \f{b\in\mdl[A_{k+1}]{D}}, \funeq{\mph[f_{k+1}]{h}}{\fun{\mph[A_{k+1}]{h}}{b}}{\fun{\mph[A_{k+2}]{h}}{\fun{f_{k+1}}{b}}}; the fourth equality simplifies to the composition of functions up to \f{f_{k+1}}.
	
	Thus, we have shown that:\begin{align*}
		\fun{\bp{\mph[f_{k+1}]{h}\circ\mph[f_{k}]{h}\circ\mph[f_{k-1}]{h}\circ\dots\mph[f_{1}]{h}}}{\fun{\mph[A_{1}]{h}}{a}} = \fun{\mph[A_{k+2}]{h}}{\fun{\bp{f_{k+1}\circ f_k \circ f_{k-1}\circ\dots\circ f_1}}{a}}.
	\end{align*}
	
	Therefore, Theorem~\ref{thm:compo2} holds for \f{n=k+1}, and by induction, for all integers \f{n\geq 1}, the diagram commutes. At each function \f{f_i} for all \f{a_i\in\mdl[A_i]{D}}, we have that:\begin{align*}
		\funeq{\mph[f_{i}]{h}}{\fun{\mph[A_{i}]{h}}{a_i}}{\fun{\mph[A_{i+1}]{h}}{\fun{f_{i}}{a_i}}},
	\end{align*}

	and the composition \f{\mph[f_n]{h}\circ\mph[f_{n-1}]{h}\circ\dots\circ\mph[f_1]{h}} corresponds to the composition \f{f_n\circ f_{n-1}\circ\dots\circ f_1}. For all \f{a\in\mdl[A_1]{D}}:\begin{align*}
		\funeq{\bp{\mph[f_{n}]{h}\circ\mph[f_{n-1}]{h}\circ\dots\circ\mph[f_{1}]{h}}}{\fun{\mph[A_{1}]{h}}{a}}{\fun{\mph[A_{n+1}]{h}}{\fun{\bp{f_n \circ f_{n-1}\circ\dots\circ f_1}}{a}}},
	\end{align*}
	
	in which the path from \mdl[A_1]{D} to \mdl[A_{n+1}]{D} via \f{f_n \circ f_{n-1}\circ \dots \circ f_1} and mapped by \mph[A_{n+1}]{h} is equivalent to the path from \mdl[{A_1}]{D} mapped by \mph[A_1]{h} and then through the sequence of mappings \f{\mph[f_1]{h},\mph[f_2]{h},\dots,\mph[f_n]{h}}.

\end{proof}

\section{Principle of compositionality}\label{sec:comp}
Compositionality is the principle that an expression's meaning derives from its parts and their combination; it addresses how language generates infinite structures from finite resources. Though often attributed to Frege\footnote{\cite{baronietal2014frege} mention that Frege never actually explicitly stated this principle, which was likely assumed in Boole's earlier work.}, the concept's roots trace to Plato, who argued that sentences have structure, parts with different functions, and meaning determined by these parts. The principle gained mathematical formalization in discrete logic \cite{Boole1854,Frege1884}, developing in parallel to work on continuous vector spaces by \cite{Hamilton1847,Grassmann1862}.

\cite{Montague1970} later formalized compositionality as a homomorphism between linguistic expressions and meanings, modeling both syntax and semantics as multi-sorted algebras. His \qte{Universal Grammar} posited no fundamental difference between natural and logical languages, introducing systematic natural language processing through what became known as Montague grammar. This approach requires two elements: an explicit lexicon providing logical forms for words, and explicit rules determining combination order for valid semantic representation. The resulting syntax-driven semantic analysis assumes a mapping from syntactic to semantic types, where syntactic composition implies semantic composition (the rule-to-rule hypothesis).

A \term{multi-sorted algebra} is an algebraic structure with multiple sorts (types), each with its own domain of elements and operations. First, the source algebra represents syntactic structures. Let \f{S} be a set of syntactic sorts (\eg, Noun Phrase (NP), Verb Phrase (VP), Sentence (S)). For each sort \f{s\in S}, \f{A_s} is the set of syntactic elements (expressions) of sort \f{s}. Let \f{\Gamma} be a set of syntactic operation symbols, where, for each \f{\gamma \in \Gamma}, there is an operation:\begin{align*}
	F_\gamma: A_{{\fun{s_1}{\gamma}}}\times A_{{\fun{s_2}{\gamma}}}\times\dots\times A_{{\fun{s_n}{\gamma}}} \rightarrow A_{{\fun{s}{\gamma}}},
\end{align*}

where \f{\fun{s_1}{\gamma},\fun{s_2}{\gamma},\dots,\fun{s_n}{\gamma},\fun{s}{\gamma}\in S}.

\begin{definition}\label{def:alg}
	The \term{source algebra} is defined as:\begin{align*}
		\mdl{A} = \bt{\bp[{s\in S}]{A_s},\bp[{\gamma\in \Gamma}]{F_\gamma}},
	\end{align*}
\end{definition}

which, in prose-style:\begin{align*}
	\mdl{A} = \bt{\esc{`}\text{\makecell{collection of sets of syntactic elements \\ of specific syntactic categories}}\esc{'},\esc{`}\text{set of syntactic operations}\esc{'}}.
\end{align*}

Second, the target algebra represents semantic interpretations. Let \f{T} be a set of semantic sorts (which we saw as types before). For each sort \f{t\in T}, \f{B_t} is the set of semantic elements of sort \f{t}. Let \f{\Delta} be a set of semantic operations symbols such that, for each \f{\delta\in\Delta}, there is an operation:\begin{align*}
	G_\delta: B_{{\fun{t_1}{\delta}}}\times B_{{\fun{t_2}{\delta}}}\times\dots\times B_{{\fun{t_n}{\delta}}} \rightarrow B_{{\fun{ts}{\delta}}},
\end{align*}

where \f{\fun{t_1}{\delta},\fun{t_2}{\delta},\dots,\fun{t_n}{\delta},\fun{t}{\delta}\in T}.

\begin{definition}\label{def:sem}
	The \term{target algebra} is defined as:\begin{align*}
		\mdl{B} = \bt{\bp[{t\in T}]{B_t},\bp[{\delta\in \Delta}]{G_\delta}}.
	\end{align*}
\end{definition}

In prose-style:\begin{align*}
	\mdl{B} = \bt{\esc{`}\text{\makecell{collection of sets of semantic elements \\ of specific semantic categories}}\esc{'},\esc{`}\text{set of semantic operations}\esc{'}}.
\end{align*}

An interpretation, then, is a homomorphism \mph{h} from the source algebra \mdl{A} to the target algebra \mph{B}. This homomorphism preserves the algebraic structure such that syntactic combinations correspond to semantic combinations. That is, there is a mapping \f{\sigma: S\rightarrow T} from syntactic sorts to semantic sorts such that, for all \f{a\in A_s}:\begin{align*}
	\fun{h}{a}\in B_{\fun{\sigma}{s}},
\end{align*}

such that elements of a syntactic category are mapped to elements of the corresponding semantic type. There is a mapping \f{\rho:\Gamma\rightarrow\Delta} from syntactic operation symbols to semantic operation symbols such that, for all \f{\gamma \in \Gamma} and \f{a_i \in A_{\fun{s_i}{\gamma}}}:\begin{align*}
	\funeq{h}{\fun{F_\gamma}{a_1,a_2,\dots,a_n}}{\fun{G_{\fun{\rho}{\gamma}}}{\fun{h}{a_1},\fun{h}{a_2},\dots,\fun{h}{a_n}}}.
\end{align*}

This equation \emph{is} compositionality: the meaning of a complex expression is determined by the meanings of its parts and the syntactic operation used to combine them. This compositionality is driven exactly by explicit syntax and explicit semantics. How, then, is a distributional treatment accounted for? Additionally, the productivity argument supports compositionality's validity: humans understand novel sentences using only word meanings and combination rules. However, while natural even to young children, it is the \emph{computational implementation} of composition that has proven non-trivial. By the twenty-first century, the field had developed both distributional vector-based models (capturing lexical semantics through usage patterns) and symbolic compositional models; their integration, if it exists, remained an open question.

This paper is \emph{not} a paper about unification; rather, we show a \emph{compatibility} between the two semantics. This is an important distinction. Humans have available to them logical reasoning, but also relational reasoning. It stands to reason that multiple models of language semantics should account for these, and it is natural to inquire how these models may be accessible one to the other. What we have shown is, assuming a vector logic, that language processing is well-founded for this multimodal semantics; in this way, compositionality is shown in a distributional semantics, influenced by composition in a formal semantics. Here, we have supplied proofs for this compatibility, which have been absent elsewhere.

We have shown the compatibility exists; the \emph{character} of the composition in distributional semantics, however, is variable, and much work has been done here. Phrase vectors are built over word vectors; algebraic models of composition follow from a number of possible approaches. Vectors by themselves do not necessarily respect compositionality; the critical failure here is a consequence of the definition of a vector space. While closure under vector addition permits the (linear) combination of linguistic constituents, language is not necessarily linear; commutativity and associativity of vector addition are responsible for possible interpretations:\begin{align*}\label{eqn:bad}
	\vv{the\, student\, read\, the\, book} & =\vv{the}+\vv{student}+\vv{read}+\vv{the}+\vv{book} \\
	& =\vv{the}+\vv{student}+\vv{read}+\vv{book}+\vv{the} \\
	& = \dots \\
	& =\vv{read}+\vv{student}+\vv{the}+\vv{the}+\vv{book} \\
	& = \dots \\
	& = \vv{book}+\vv{read}+\vv{student}+\vv{the}+\vv{the} \\
	& = \dots
\end{align*}

Extending distributional models to respect language at the phrase-level has been a hard-fought conflict, accounted in \cite{shr2010,baroni2010nouns,mitchell2010composition,socher2012semantic,baronietal2014frege}, building on earlier work from cognitive science \cite{foltz1996latent,kintsch2001predication}. Compositional Distributional Semantics, especially equipped with category theory, has shown a successful integration of the Lambek calculus-based Pregroup grammar with vector spaces \cite{shr2010}. Attempts at composition by addition \cite{seonwooetal2019additive,naito2022revisitingadditivecompositionalityand,tian2017mechanism}, multiplication \cite{kiros2014multiplicative,luan2016multiplicative,mitchelllapata2010composition}, multiplication with weighted sums \cite{mitchelllapata2008vector,mitchelllapata2010composition}, tensor products \cite{kartsaklis2013prior,milajevsetal2014evaluating,smolensky1990tensor}, and otherwise mixture-based models \cite{baronietal2014frege} have been explored. Furthermore, representing function words has been a challenge, despite efforts such as those in \cite{herbelotvecchi2015building,grefenstette2013towards,hermann2013not}. Distributional semantics is good at modeling semantics intuition, capturing conceptual or generic knowledge, but not necessarily referential (extra- or intra-linguistic) information. Applying distributional models to entailment tasks can certainly increase coverage \cite{westera2019dont}, but at the cost of precision \cite{beltagy2013montague}; indeed, entailment decisions require predicting whether an expression applies to the same referent or not, and these systems make trivial mistakes.

Instead, what we have shown here by Theorem~\ref{thm:homo} is that compositionality in formal semantics and compositionality in distributional semantics are \emph{compatible} one with the other, \emph{not} unified as has been attempted elsewhere; that is, there is \emph{not} an isomorphism (in fact, if we force an isomorphism, then we run into issues about coverage and infinite sets), but a homomorphism. This compatibility, derived from the recursion shown in Theorem~\ref{thm:recur}, for the well-foundedness in the homomorphisms of \f{H = \bc[\tau\in T]{\mph[\tau]{h}}}, and the \f{n}-ary composition in Theorem~\ref{thm:compo2}, shows that composition may be well-defined in a distributional model. The characterization of compatibility here supports hybrid cognitive models that combine symbolic and sub-symbolic processing, and so suggests that we have access to mental representations both compositional and distributed, not mutually exclusive. 

Formal semantics provides models for truth-conditional meaning; vector space semantics provide models that capture contextual and relational meaning. The former aligns with Fregean and Tarskian \qte{meaning as reference}; the meaning of an expression is its correspondence with entities, relations, or truth values in a model. The latter follows Wittgenstein's \qte{meaning as use} perspective; meaning, then, is inferred from the distributional properties of words, and so is sensitive to usage patterns and conceptual similarity rather than specific referential grounding. Humans have access to both, and it is their compatibility that suggests a multimodal processing.

\section{A note on intensionality}\label{sec:intens}
Where the extensional interpretation of a function is concerned only with the input-output, the \term{intensional}\footnote{Note that it is intentional that this word is spelled \orth{intensional}.} interpretation of a function is concerned with \emph{how} those functions are computed; two functions \f{f:X\rightarrow Y} and \f{g:X\rightarrow Y} with identical input-output mappings may differ intensionally, \ie, they have different algorithms or formulae\footnote{The ur-example is from computer science, in which functions are formulae: some sorting algorithm \f{A} and another \f{B} can have \emph{very} different algorithm designs with \emph{very} different complexities, regardless if they have the same sorted list output when give the same input.}. Functions \f{f:X\rightarrow Y} and \f{g:X\rightarrow Y} are considered identical only if they are defined by essentially the same formula \cite{Selinger2013}. Intensional semantics is concerned with parametrizations pointing to elements of a set (that is, there is an entire additional functional step before accessing some element of a set); the intension of an expression is a function that takes a parameter as input and returns the extension of the expression \cite{quigley2024categoricalframeworktypedextensional}. 

Intensional semantics has the three primitive types: \bc{e,t,s}. Here: \f{e} is the type of \term{entities}, all entities have type \f{e}; \f{t} is the type of \term{truth values}, the truth values \bc{1,0} have type \f{t}; \f{s} is a type of \term{index}; indices may be any of intensional domains (worlds, times, world-time pairs, \etc, which may also be specified for types themselves).\begin{align*}
	e &: \text{entities in the world} \\
	t &: \text{truth or false} \in \bc{1,0} \\
	s &: \text{circumstantial indices}
\end{align*}

\begin{definition}[Intensional Model]\label{def:intenmod}
	Let \f{\mathcal{T}} be a set of types, and \f{\mathcal{K} = \mathcal{T}\setminus\bc{e,t}} be a set of types \sans entities and truth values. An \term{intensional model} \mdl[int]{M} is a triple of the form:\begin{align*}
		\mdl[int]{M} = \bt{\bp[\tau\in\mathcal{T}\setminus\mathcal{K}]{\mdl[\tau]{D}},\bp[\tau\in\mathcal{K}]{\mdl[\tau]{F}},\mdl{I}},
	\end{align*}
	where:
	\begin{itemize}
		\item for each type \f{\tau\in\mathcal{T}\setminus\mathcal{K}}, the model determines a corresponding domain \mdl[\tau]{D}. The \term{standard domain} is an indexed family of sets \bp[\tau\in\mathcal{T}\setminus\mathcal{K}]{\mdl[\tau]{D}}.
		\begin{itemize}
			\item \f{\mdl[e]{D}=\mdl{D}} is the set of entities.
			\item \f{\mdl[t]{D}=\bc{0,1}} is the set of truth-values.
		\end{itemize}
		\item For each type \f{\tau\in\mathcal{K}}, the model determines a corresponding domain frame \mdl[\tau]{D}. The \term{Kripke frame} is an indexed family of tuples whose elements are a set and a relation on that set \bp[\tau\in\mathcal{K}]{\mdl[\tau]{F}}.
		\begin{itemize}
			\item \bp[\tau\in\mathcal{K}]{\mdl[\tau]{D}} is an indexed family of non-empty (inhabited) sets of elements
			\item \bp[\tau\in\mathcal{K}]{\mdl[\tau]{R}} is an indexed family of binary accessibility or transition relations \f{\mdl{R}:\bp[\tau\in\mathcal{K}]{\mdl[\tau]{D}}\times\bp[\tau\in\mathcal{K}]{\mdl[\tau]{D}}}
		\end{itemize}
		
		\item The set of indices \f{S} is the cartesian product of the domains for intensional tyes: \f{S = \prod_{\tau\in\mdl{K}} \mdl[\tau]{D}}. Each element \f{s\in S} is an index, representing a combination of values from the intensional domains.

		For each type \f{\tau\in\mathcal{T}} and each non-logical constant \f{c_\tau} of type \f{\tau} in the formal language \fl, the intensional \term{interpretation function} \mdl{I} assigns an intension, which is a function from indices \f{s} to elements of the corresponding domain.
		
		Let \f{V} be the set of individual variables, and \mdl[e]{D} be the domain of entities. The \term{assignment function} \f{\gsn:V\rightarrow\mdl[e]{D}} maps each variable \f{x\in V} to an entity \f{\fun{\gsn}{x}\in\mdl[e]{D}}. For any variable \f{x\in V} and individual \f{k\in \mdl[e]{D}}, the \term{variant assignment function} \gsnt{x\mapsto k} is defined by:\begin{align*}
			\fun{\gsnt{x\mapsto k}}{y}= \begin{cases}k & \text { if } y=x, \\ \fun{\gsn}{y} & \text { if } y \neq x .\end{cases}
		\end{align*}

		\begin{enumerate}
			\item	If \sph{} is a constant \f{\sph \in \fl}, \fun{\mdl{I}}{\sph} is a function from indices \f{S} to elements of the domain \mdl[\tau]{D} by \f{\fun{\mdl{I}}{\sph}:S\rightarrow \mdl[\tau]{D}}. Thus, at each index \f{s\in S}, \f{\funt{\mdl{I}}{\sph}{s}\in\mdl[\tau]{D}}.

			\item If \sph{} is a variable \f{\sph \in V\com \funeq{\gsn}{\sph}{k} \in \mdl[e]{D}}.

			\item For any \f{n}-ary predicate \f{P \in\fl}, the interpretation \fun{\mdl{I}}{P} is a function \f{\fun{\mdl{I}}{P}:S\rightarrow \fun{\mdl{P}}{\mdl[e]{D}^{n}}}, where \fun{\mdl{P}}{\mdl[e]{D}^{n}} is the power set of \f{\mdl[e]{D}^{n}}. At each index \f{s\in S}, \f{\funt{\mdl{I}}{P}{s}\subseteq\mdl[e]{D}^n} represents the set of \f{n}-tuples satisfying \f{P} at \f{s}.

			\item For any \f{n}-ary function \f{f\in \fl}, the interpretation \fun{\mdl{I}}{f} is a function: \f{\fun{\mdl{I}}{f}:S\rightarrow\bp{\mdl[e]{D}^n \rightarrow \mdl[e]{D}}}. At each index \f{s\in S}, \f{\funt{\mdl{I}}{f}{s}:\mdl[e]{D}^n \rightarrow \mdl[e]{D}} is a function from \f{n}-tuples of entities to entities.

		\end{enumerate}
		
		An intensional \term{denotation function} \den[\mdl{M},\gsn,s]{.} assigns to every expression \sph{} of the language \fl{} a semantic value \den[\mdl{M},\gsn,s]{\sph} by recursively building up basic interpretation functions \funt{\mdl{I}}{.}{s} as \funt{\mdl{I}}{\sph}{s}. For all indices \f{s\in \prod_{\tau\in\mathcal{K}} \mdl[\tau]{D}}:
		
		\begin{enumerate}
			\item if \sph{} is a constant \f{\sph \in \fl\com \den[\mdl{M},\gsn,s]{\sph} = \funt{\mdl{I}}{\sph}{s}}.
			
			\item if \sph{} is a variable in a set of variables \f{\sph \in V\com \den[\mdl{M},\gsn,s]{\sph} = \fun{\gsn}{\sph}}.
			
			\item for any \f{n}-ary predicate \f{P \in\fl} and sequence of terms \f{\sph_1,\sph_2,\dots,\sph_n}, we have that \f{\den[\mdl{M},\gsn,s]{\fun{P}{\sph_1,\sph_2,\dots,\sph_n}}=1\wenn}\begin{align*}
				\bp{\den[\mdl{M},\gsn,s]{\f{\sph_1}},\dots,\den[\mdl{M},\gsn,s]{\f{\sph_n}}}\in\den[\mdl{M},\gsn,s]{\f{P}}=\funt{\mdl{I}}{P}{s}.
			\end{align*}						
			
			\item for any \f{n}-ary function \f{f \in\fl} and sequence of terms \f{\sph_1,\sph_2,\dots,\sph_n}, we have that\begin{align*}
				\den[\mdl{M},\gsn,s]{\fun{f}{\sph_1,\dots,\sph_n}} & = \fun{\den[\mdl{M},\gsn,s]{\f{f}}}{\den[\mdl{M},\gsn,s]{\f{\sph_1}},\dots,\den[\mdl{M},\gsn,s]{\f{\sph_n}}} \\
				& =\fun{\funt{\mdl{I}}{f}{s}}{\den[\mdl{M},\gsn,s]{\f{\sph_1}},\dots,\den[\mdl{M},\gsn,s]{\f{\sph_n}}}.
			\end{align*} 
			
		\end{enumerate}
		
	\end{itemize}

\end{definition}

The technicalities of Definition~\ref{def:intenmod} may be parsed and characterized in prose-terms as:\begin{align*}
	\mdl[int]{M} = \bt{\esc{`}\text{\makecell{collection of type-specific \\ sets of allowed values}}\esc{'}, \esc{`}\text{\makecell{binary relations on collection of \\  type-specific sets of allowed values}}\esc{'},\esc{`}\text{\makecell{function that gives \\ meaning to symbols}}\esc{'}}.
\end{align*}

It remains, then, to extend Theorem~\ref{thm:homo} to now accommodate the relationship between frame structures in the formal model with their representation in the vector space model. The intensional model of Definition~\ref{def:intenmod} has this additional structure of the frame relative to the extensional model of Definition~\ref{def:extmod}; how we represent this additional structure in vector logic remains. 

Some work in this area follows. Kripke frames find representation in vector spaces through $K$-algebras\footnote{Some examples: quaternions \bp{H,\star_H}; octonions \bp{O,\star_O}; matrix algebras \bp{M_n,\star} and \bp{M_n,\circ_J}.} \cite{greco2021vectorspaceskripkeframes} \bp{V,\star_V}, where the vector space's subspace structure \fun{S}{V} forms a complete sub-$\bigcap$-semilattice of \fun{S}{V}. A closure operator \bq{-} acts as a nucleus on the power set, while bilinear operations encode ternary relations. This structure generates a complete residuated lattice \f{V^+}, with modal extensions achieved through relations \f{\mdl{R}\subseteq V\times V} compatible with scalar multiplication. The framework extends to quantum mechanical formalisms via isomorphisms between quantum Kripke frames and irreducible Hilbertian geometries \cite{Zhong2018}, and generalizes further through embedding into pointed stably supported quantales \cite{MARCELINORESENDE2008}, where modal operators are interpreted through the action of accessibility elements on valuations. 

While our work employs the lambda calculus as its foundational framework, \cite{palm2024} demonstrates how vector spaces can function as Kripke-style models where the subspace lattice relates to the Boolean algebra component of complex algebras in traditional frames. In their approach, bilinear operations serve as ternary relations supporting multiplicative Lambek calculus semantics, exactly contrasting with our lambda calculus-based framework. Their binary operations on subspace lattices emerge analogously to Routley-Meyer semantics connectives.

The engine, therefore, that makes this representation work throughout the above is driven by a Lambek calculus. This is an important detail. The Lambek calculus representation naturally aligns with vector semantics through its categorical and resource-sensitive nature, where the tensor product \f{\otimes} directly models linear composition and the residuated lattice structure captures resource consumption. If we are to adapt Theorem~\ref{thm:homo} as we have worked out here to Kripke frame structures, we must explicitly handle variable binding and substitution, which are not necessarily native to the vector space Kripke frames as presented. A key component of our proofs throughout has been the type-drivenness of the models; function types beyond the multiplicative structure of Lambek calculus requires, therefore, an extension of the $K$-algebra structure to model functional application and abstraction. Finally, management of non-linear resources is evergreen in the Heim and Kratzer style semantics; lambda calculus permits unrestricted variable use unlike Lambek calculus's resource sensitivity, which we need to adapt appropriately for any meaningful semantics to result.

\section{Conclusion and perspectives}\label{sec:disc}
This work demonstrates a compatibility between formal and distributional semantics through vector logic, in which we establish well-defined correspondences between logical and matrix operations. In this way, we explicitly reject a unification of the two semantics in favor of brokering compatibility, in no small part informed by critiques of over-mathematization otherwise in mathematical sciences \cite{GYLLINGBERG2023109033}. This is a work of \emph{mathematical linguistics}, and seeks to explain how information is represented in natural languages via structures\footnote{No real citation with this description, but this characterization of mathematical linguistics follows from a conversation with Artem Burnistov, 2023.}; \ie, we study the linguistic structures (of semantics), how they relate to each other, and the extent to which they may be accessible one to the other. Contrast this with, say, \emph{computational linguistics}, which seeks to study a particular \emph{process} and solve a problem therein. Studying language structures through the lens of mathematical linguistics, we may get the process component from a computational perspective for free (to a certain extent), in which we provide a well-defined foundation upon with to elaborate and develop the compatibility between the two semantics. The connective tissue, the compatibility, follows from a vector logic. In this way, we see the same sky, just through a different lens. 

We make no hard claims like Frege (how natural language relates to formal language), and we do not take such an extreme position as Chomsky (that natural language and formal language are entirely inequivocal); rather, we work in the assumption that mathematics \emph{at least} approximates processing and is useful (enough) for understanding and studying language processes, and \emph{at most} indicative of general mental capability. Science, after all, is the art of approximation; how acceptable that approximation may be differs by domain. Isaac Newton never claimed that the universe is calculus; the universe is not calculus, though calculus may model it. The magician should exercise caution that he is not deceived by his own illusions.

We stress caution: the pursuit of unification often leads to:

\begin{enumerate}
	\item an over-emphasis on mathematical elegance at the expense of empirical adequacy;\footnote{As Krugman notes of economics: \qte{economists mistook beauty, clad in impressive-looking mathematics, for truth} \cite{krugman2009how}.}
	\item focus on simplified models that fail to capture system-specific complexity;
	\item neglect of actual phenomena in favor of theoretical generality.
\end{enumerate}

Linguistics is a science; it is based on the interpretation of experiments. Mathematics serves a dual role here: (1) it is at least a tool for presenting, analyzing, and forecasting language data (an apparatus for the art of approximation); (2) at its most ambitious, it may indicate general mental processes. However which way we tend towards, we must exercise caution: language theories \emph{must} be anchored in empirical evidence and require experimental validation to be credible; linguistics is not esoteric diagramming. 

Instead, we advocate for a compatibility, a perspective of a kind of pluralism that offers:

\begin{enumerate}
	\item recognition of distinct but interrelated semantic frameworks that are accessible one to the other;
	\item mathematical rigor grounded in observed language processing (here, that is reasoning based on logical processes and reasoning based on distributional properties);
	\item preservation of domain-specific insights while enabling formal correspondence.
\end{enumerate}

Semantic pluralism acknowledges that diverse frameworks serve distinct purposes while maintaining principled connections between them, reflecting our cognitive ability to switch between different modes of reasoning and semantic processing. Rather than seeking a unified theoretical framework, which would oversimplify language's inherent complexity, we embrace an open-ended approach that continuously explores new perspectives, providing mathematical foundations for understanding compositional distributional semantics without constraining it to a finite set of formalisms. To this end, we have provided here a well-defined foundation of relating models for what might have otherwise been taken for granted in studying a compositional distributional semantics.

\newpage
\bibliography{references}

\end{document}